\theoremstyle{plain}
\newtheorem{theorem}{Theorem}[section]
\newtheorem{lemma}[theorem]{Lemma}
\newtheorem{prop}[theorem]{Proposition}
\theoremstyle{definition}
\theoremstyle{remark}
\begin{document}

\title[Sobolev orthogonal polynomials on the unit ball]
{Sobolev orthogonal polynomials on the unit ball via outward normal derivatives}

\author[A. M. Delgado]{Antonia M. Delgado}
\address[A. M. Delgado]{{Departamento de Matem\'atica Aplicada
and Instituto de Matem\'aticas (IEMath-GR) \\
Universidad de Granada\\
18071 Granada, Spain}}\email{amdelgado@ugr.es}

\author[L. Fern\'andez]{Lidia Fern\'andez}
\address[L. Fern\'andez]{{Departamento de Matem\'atica Aplicada
and Instituto de Matem\'aticas (IEMath-GR) \\
Universidad de Granada\\
18071 Granada, Spain}}
\email[Corresponding author]{lidiafr@ugr.es}

\author[D. Lubinsky]{Doron Lubinsky}
\address[D. Lubinsky]{{School of Mathematics\\ 
Georgia Institute of Technology\\
Atlanta, GA 30332, USA}}\email{lubinsky@math.gatech.edu}

\author[T. E. P\'erez]{Teresa E. P\'erez}
\address[T. E. P\'erez]{{Departamento de Matem\'atica Aplicada
and Instituto de Matem\'aticas (IEMath-GR) \\
Universidad de Granada\\
18071 Granada, Spain}}\email{tperez@ugr.es}

\author[M. A. Pi\~{n}ar]{Miguel A. Pi\~{n}ar}
\address[M. A. Pi\~{n}ar]{{Departamento de Matem\'atica Aplicada
and Instituto de Matem\'aticas (IEMath-GR) \\
Universidad de Granada\\
18071 Granada, Spain}}\email{mpinar@ugr.es}

\begin{abstract}
We analyse a family of mutually
orthogonal polynomials on the unit ball with respect to an inner
product which involves the outward normal derivatives on the sphere.
Using their representation in terms of spherical
harmonics, algebraic and analytic properties will be deduced. First,
we deduce explicit connection formulas relating classical multivariate
ball polynomials and our family of Sobolev
orthogonal polynomials. Then explicit representations for the norms and
the kernels will be obtained. Finally, the asymptotic
behaviour of the corresponding Christoffel functions is studied.
\end{abstract}

\subjclass[2000]{33C50, 42C10}

\keywords{Sobolev orthogonal polynomials, unit ball, normal derivative}

\maketitle

\section{Introduction}
\setcounter{equation}{0}

The term \textit{Sobolev orthogonal polynomials} usually refers to a family of polynomials which are
orthogonal with respect to an inner product which simultaneously involves functions and their derivatives.
In the one variable case this kind of orthogonality has been studied during the last 25 years, and it
constitutes the main subject of a vast literature (see \cite{MarcellanXu2015} and the references therein).

Sobolev orthogonal polynomials in several variables have a considerably shorter history. There are
very few references on the subject and most of them deal with Sobolev orthogonality on the unit
ball $\mathbb{B}^d$ of $\mathbb{R}^d$. Usually, the inner product considered is some modification
of the classical inner product on the ball
$$
  \langle f, g\rangle_\mu = \frac{1}{\omega_\mu}\int_{\mathbb{B}^d} f(x) g(x) W_\mu(x) dx,
$$
where $W_\mu(x) = (1-\|x\|^2)^\mu$ on $\mathbb{B}^d$, $\mu > -1$, and
$\omega_\mu$ is a normalizing constant such that $\langle 1 ,1 \rangle_\mu = 1$.

One of the first works on this subject was a paper by Y. Xu  \cite{Xu2008}, where the inner product
$$
\langle f,g \rangle_{I} = {\frac{\lambda}{\sigma_d}} \int_{\mathbb{B}^d} \nabla f(x) \cdot \nabla g(x) dx
+ {\frac{1}{\sigma_d}} \int_{\mathbb{S}^{d-1} } f(\xi) g(\xi) d\sigma(\xi), \quad \lambda  >0,
$$
was considered. Here, $d\sigma$ denotes the surface measure on the sphere $\mathbb{S}^{d-1}$ and
$\sigma_d$ denotes the surface area. In the same article, the author studied another inner product
where the second term on the right hand side was replaced by ${f(0)g(0)}$. In both cases,
the central symmetry of the inner products plays an essential role and using
spherical polar coordinates a mutually orthogonal polynomial basis is constructed. The polynomials in
this basis are expressed in terms of Jacobi polynomials and spherical harmonics mimicking the
standard construction of the classical ball polynomials.

\bigskip

In the present paper, we study orthogonal polynomials with respect to the Sobolev inner product
$$
 \langle f ,g \rangle_{\mu}^S
     =  \frac{1}{\omega_\mu} \int_{\mathbb{B}^d} f(x) g(x)  W_\mu(x)  dx +
    \frac{\lambda}{\sigma_d}  \int_{\mathbb{S}^{d-1}} \frac{\partial f}{\partial {\bf n}}(\xi)
    \frac{\partial g}{\partial {\bf n}}(\xi)d\sigma(\xi),
$$
where $\lambda>0$ and $\frac{\partial }{\partial {\bf n}}$ stands for the outward normal derivative operator.

\bigskip

Using again spherical polar coordinates, we shall construct a sequence of mutually orthogonal polynomials
with respect to $ \langle \cdot ,\cdot \rangle_{\mu}^S$, which depends
on a family of Sobolev orthogonal polynomials of one variable. The latter are usually called a
\textit{non--diagonal Jacobi Sobolev--type} family of orthogonal polynomials and can be expressed
in terms of Jacobi polynomials (see \cite{DuenasGarza2013}).

Standard techniques provide us explicit connection formulas relating classical multivariate
ball polynomials and our family of Sobolev orthogonal polynomials. The explicit representations
for the norms and the kernels will be obtained.

\bigskip

A very interesting problem in the theory of multivariate orthogonal polynomials is that of
finding asymptotic estimates for the Christoffel functions, because these estimates are related
to the convergence of the Fourier series. Asymptotics for Christoffel functions associated to the
classical orthogonal polynomials on the ball were obtained by Y. Xu in 1996 (see \cite{Xu1996}).
Recently, more general results on the asymptotic behaviour of the Christoffel functions were
established by Kro\'o and Lubinsky \cite{KrooLubinsky2013A,KrooLubinsky2013B}. Those results
include estimates in a  quite general case where the orthogonality measure satisfies some regularity
conditions.

Since our orthogonal polynomials do not fit into the above mentioned case, the asymptotic of the
Christoffel functions deserves special attention. Not surprisingly, our results show that
in any compact subset of the interior of the unit ball Christoffel functions in the Sobolev case
behave exactly as in the classical case, see Theorem~\ref{th2}. On the sphere the situation
is quite different and we can perceive the influence of the outward normal derivatives
in the inner product, see Theorem~\ref{th1}.

\bigskip

The paper is organized as follows. In the next section, we state the background materials on orthogonal
polynomials on the unit ball and spherical harmonics that we will need later.
In Section 3, using spherical polar coordinates we construct explicitly a sequence
of mutually orthogonal polynomials with respect to $ \langle \cdot ,\cdot \rangle_{\mu}^S$.
Those polynomials are given in terms of spherical harmonics and a family of univariate Sobolev
orthogonal polynomials in the radial part, their properties are studied in Section 4.
In Section 5, we deduce explicit connection formulas relating classical multivariate
ball polynomials and our family of Sobolev orthogonal polynomials.
Moreover, an explicit representation for the kernels is obtained. The asymptotic
behaviour of the corresponding Christoffel functions is studied in Section 6.
And finally, in Section 7, we consider the special case $d=2$.

\section{Preliminaries}
\setcounter{equation}{0}

In this section we describe background materials on orthogonal polynomials and spherical harmonics.
The first subsection is devoted to recall some properties on the Jacobi polynomials that
we shall need later. Second subsection recalls the basic results on spherical harmonics
and classical orthogonal polynomials on the unit ball.

\subsection{Classical Jacobi polynomials}\label{Jacobi}

First, we collect some properties of classical Jacobi polynomials $P_n^{(\alpha,\beta)}(t)$.
All of them are well known and can be found in \cite[Chapt. 22]{AbramowitzStegun1964} and \cite{Szego1975}.
For $\alpha, \beta > -1$, these
polynomials are orthogonal with respect to the Jacobi inner product
$$
\left(f,g\right)_{[\alpha,\beta]}=\int_{-1}^1f(t)\, g(t)\, w_{\alpha,\beta}(t)dt,
$$
where the weight function is defined as
$$
   w_{\alpha,\beta}(t) = (1-t)^\alpha(1+t)^{\beta}, \qquad  -1< t < 1.
$$
Jacobi polynomials are normalized by
\begin{equation} \label{jac-norm}
P_n^{(\alpha,\beta)}(1) = \binom{n+\alpha}{n}.
\end{equation}
The squares of the $L^2$ norms are expressed as
\begin{equation}\label{normJ}
h_n^{(\alpha,\beta)} =  \left(P_{n}^{(\alpha, \beta)},P_{n}^{(\alpha, \beta)}\right)_{[\alpha,\beta]}
= \frac{2^{\alpha+\beta+1}\,\Gamma(n+\alpha+1)\,\Gamma(n+\beta+1)}{(2n+\alpha+\beta+1)\,n!\,
\Gamma(n+\alpha+\beta+1)}.
\end{equation}
The polynomial $P_n^{(\alpha,\beta)}(t)$ is of degree $n$ and its leading coefficient $k_n^{(\alpha,\beta)}$
is given by
\begin{equation}\label{leadingcoef}
k_n^{(\alpha,\beta)} = \frac{1}{2^n}\, \binom{2n + \alpha + \beta}{n}.
\end{equation}
The derivative of a Jacobi polynomial is again a Jacobi {polynomial},
\begin{equation}\label{derJ}
\frac{d}{d t} P_n^{(\alpha,\beta)}(t) = \frac{n+\alpha + \beta+1}{2} P_{n-1}^{(\alpha+1,\beta+1)}(t).
\end{equation}

The following relation between different families of the Jacobi polynomials also hold:
\begin{equation}\label{JacAF2}
P_n^{(\alpha,\beta)}(t) = \frac{n+\alpha+\beta+1}{2\,n + \alpha + \beta+1} P_n^{(\alpha+1,\beta)}(t)
- \frac{n+\beta}{2\,n + \alpha + \beta+1}P_{n-1}^{(\alpha+1,\beta)}(t).
\end{equation}
As usual we will denote by $p_n^{(\alpha,\beta)}(t)$ the orthonormal Jacobi polynomial of degree $n$.
Moreover, using (\ref{jac-norm}), (\ref{normJ}) and (\ref{derJ}), we get
\begin{eqnarray}
&& \hspace*{-1.3cm} p_{n}^{\left( \alpha ,\beta \right) }\left( 1\right) =\left( \frac{2n+\alpha
+\beta +1}{2^{\alpha +\beta +1}}\frac{\Gamma \left( n+\alpha +1\right)
\Gamma \left( n+\alpha +\beta +1\right) }
{\Gamma \left( n+1\right) \left( n+\beta +1\right) }\right)^{1/2}\nonumber
\\
&& \times \Gamma
\frac{1}{\Gamma \left( \alpha
+1\right) },\label{JacON1}
\\[10pt]
&& \hspace*{-1.3cm} p_{n}^{\left( \alpha ,\beta \right) \prime }\left( 1\right) = \left( \frac{
2n+\alpha +\beta +1}{2^{\alpha +\beta +3}}\frac{\Gamma \left( n+\alpha
+1\right) \Gamma \left( n+\alpha +\beta +1\right)n}{\Gamma \left( n\right)
\Gamma \left( n+\beta +1\right) }\right)^{1/2}\nonumber
\\
&&\qquad \times \frac{n+\alpha +\beta +1}{
\Gamma \left( \alpha +2\right) }.\label{JacON2}
\end{eqnarray}

In addition to the Jacobi polynomials we will use the corresponding kernel polynomials
defined as
\begin{equation}\label{KernelJac}
K_n(t,u;\alpha, \beta) = \sum_{k=0}^n \, \frac{P_k^{(\alpha,\beta)}(t)\,
P_k^{(\alpha,\beta)}(u)}{h_k^{(\alpha,\beta)}},
\end{equation}
which are symmetric functions. When it is clear from the context, we will omit the parameters
$\alpha$ and $\beta$ in the notation. We also denote the partial derivatives
$$
K_n^{(0,1)}(t,u) = \frac{\partial}{\partial u}K_n(t,u), \qquad
K_n^{(1,1)}(t,u) = \frac{\partial^2}{\partial t\,\partial u}K_n(t,u).
$$
It is well known (see \cite[p. 71]{Szego1975}) that
\begin{equation}\label{K(t,1)}
K_{n}(t,1) = \frac{2^{-\alpha-\beta-1}}{\Gamma(\alpha+1)}\frac{\Gamma(n+\alpha+\beta+2)}{\Gamma(n+\beta+1)}\,
P_{n}^{(\alpha+1,\beta)}(t).
\end{equation}

\bigskip

On the other hand, taking derivatives in the Christoffel--Darboux formula for the kernels in
\cite[(4.5.2) p. 71]{Szego1975},
and expressing the derivative of the kernel in terms of the Jacobi polynomials of parameters $(\alpha+2, \beta)$,
it can be shown that
\begin{eqnarray}
\lefteqn{K_{n}^{(0,1)}(t,1) = 2^{-\alpha-\beta-2}\frac{\Gamma(n+\alpha+\beta+3)}{\Gamma(\alpha+2)\Gamma(n+\beta+1)}
}\nonumber\\ &
\times & \left(\frac{n(n+\alpha+\beta+1)}{2n+\alpha+\beta+2}P_{n}^{(\alpha+2,\beta)}(t) \right.
\left. -\frac{(n+1)(n+\beta)}{2n+\alpha+\beta+2}P_{n-1}^{(\alpha+2,\beta)}(t)\right).\label{K01(t,1)}
\end{eqnarray}

\bigskip

In this way, we can compute the values of the kernels at the point $(1,1)$.

\begin{lemma}\label{Kn}
For $n\geqslant 0$, we get
\begin{eqnarray*}
K_{n}(1,1) &=& \frac{2^{-\alpha-\beta-1}}{\Gamma(\alpha+1)}
\frac{\Gamma(n+\alpha+\beta+2)}{\Gamma(n+\beta+1)}\frac{\Gamma(n+\alpha+2)}{\Gamma(n+1)\,
\Gamma(\alpha+2)},\\
K_{n}^{(0,1)}(1,1)  &=&  \frac{2^{-\alpha-\beta-2}}{\Gamma(\alpha+1)}
\frac{\Gamma(n+\alpha+\beta+3)}{\Gamma(n+\beta+1)} \frac{\Gamma(n+\alpha+2)}{\Gamma(n)\,
\Gamma(\alpha+3)},\\
K_{n}^{(1,1)}(1,1)  &=&  \frac{2^{-\alpha-\beta-3}}{\Gamma(\alpha+2)}\frac{\Gamma(n+\alpha+\beta+3)}
{\Gamma(n+\beta+1)} \frac{\Gamma(n+\alpha+2)}{\Gamma(n)\,\Gamma(\alpha+4)}\\
& & \times \,((\alpha+2)n(n+\alpha+\beta+2) + \beta).
\end{eqnarray*}
\end{lemma}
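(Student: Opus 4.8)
The plan is to derive all three values by specialising at $t=1$ the closed forms for $K_n(t,1)$ and $K_n^{(0,1)}(t,1)$ already recorded in \eqref{K(t,1)} and \eqref{K01(t,1)}, together with the normalisation \eqref{jac-norm}, which I use throughout in the form $P_m^{(\gamma,\delta)}(1)=\binom{m+\gamma}{m}=\Gamma(m+\gamma+1)/\bigl(\Gamma(m+1)\Gamma(\gamma+1)\bigr)$.

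For $K_n(1,1)$ I would simply put $t=1$ in \eqref{K(t,1)} and insert $P_n^{(\alpha+1,\beta)}(1)=\Gamma(n+\alpha+2)/\bigl(\Gamma(n+1)\Gamma(\alpha+2)\bigr)$; this yields the first formula at once. For $K_n^{(0,1)}(1,1)$ I would put $t=1$ in \eqref{K01(t,1)}, insert $P_n^{(\alpha+2,\beta)}(1)=\Gamma(n+\alpha+3)/\bigl(\Gamma(n+1)\Gamma(\alpha+3)\bigr)$ and $P_{n-1}^{(\alpha+2,\beta)}(1)=\Gamma(n+\alpha+2)/\bigl(\Gamma(n)\Gamma(\alpha+3)\bigr)$, and factor out $\dfrac{1}{2n+\alpha+\beta+2}\,\dfrac{\Gamma(n+\alpha+2)}{\Gamma(n)\Gamma(\alpha+3)}$. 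The expression left in the bracket is $(n+\alpha+\beta+1)(n+\alpha+2)-(n+1)(n+\beta)$, which factors as $(\alpha+1)(2n+\alpha+\beta+2)$; the factor $2n+\alpha+\beta+2$ then cancels, and since $(\alpha+1)/\Gamma(\alpha+2)=1/\Gamma(\alpha+1)$ one arrives at the second formula.

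For $K_n^{(1,1)}(1,1)$ I would differentiate \eqref{K01(t,1)} in $t$, using \eqref{derJ} to rewrite $\tfrac{d}{dt}P_n^{(\alpha+2,\beta)}=\tfrac{n+\alpha+\beta+3}{2}P_{n-1}^{(\alpha+3,\beta+1)}$ and $\tfrac{d}{dt}P_{n-1}^{(\alpha+2,\beta)}=\tfrac{n+\alpha+\beta+2}{2}P_{n-2}^{(\alpha+3,\beta+1)}$; then set $t=1$ and again use \eqref{jac-norm}, now with $P_{n-1}^{(\alpha+3,\beta+1)}(1)=\Gamma(n+\alpha+3)/\bigl(\Gamma(n)\Gamma(\alpha+4)\bigr)$ and $P_{n-2}^{(\alpha+3,\beta+1)}(1)=\Gamma(n+\alpha+2)/\bigl(\Gamma(n-1)\Gamma(\alpha+4)\bigr)$. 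Pulling out the common factor $\dfrac{1}{2(2n+\alpha+\beta+2)}\,\dfrac{\Gamma(n+\alpha+2)}{\Gamma(n)\Gamma(\alpha+4)}$ (using $\Gamma(n+\alpha+3)=(n+\alpha+2)\Gamma(n+\alpha+2)$ and $\Gamma(n)=(n-1)\Gamma(n-1)$) reduces the claim to the polynomial identity
\begin{align*}
&n(n+\alpha+\beta+1)(n+\alpha+\beta+3)(n+\alpha+2)-(n-1)(n+1)(n+\beta)(n+\alpha+\beta+2)\\
&\qquad=(2n+\alpha+\beta+2)\bigl((\alpha+2)\,n\,(n+\alpha+\beta+2)+\beta\bigr),
\end{align*}
after which the third formula follows.

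The only genuinely laborious step is verifying this last identity, a polynomial identity of degree four in $n$ with $\alpha,\beta$ as parameters; it can be checked by direct expansion of both sides, or more economically by evaluating both sides at five values of $n$. The other point to keep in mind is the behaviour at small $n$: for $n=0$ (and, for $K_n^{(1,1)}(1,1)$, also $n=1$) the factors $1/\Gamma(n)$ or $1/\Gamma(n-1)$ vanish, which is consistent with the vanishing of the corresponding terms of \eqref{K01(t,1)} and of the derivative formula \eqref{derJ} under the usual convention $P_{-1}\equiv 0$, so the stated formulas remain valid for all $n\geqslant 0$.
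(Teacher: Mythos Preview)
Your proposal is correct and follows exactly the route the paper indicates: the paper states this lemma without proof, merely prefacing it with ``In this way, we can compute the values of the kernels at the point $(1,1)$,'' referring back to \eqref{K(t,1)} and \eqref{K01(t,1)}. Your execution---specialising those identities at $t=1$, using \eqref{jac-norm} and \eqref{derJ}, and reducing the third case to a degree--three polynomial identity in $n$---fills in precisely the computation the paper omits.
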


\bigskip

\subsection{Orthogonal polynomials on the unit ball and spherical harmonics}

For a multi--index $\kappa = (\kappa_1, \ldots, \kappa_d)\in\mathbb{N}_0^d$, and
$x = (x_1, \ldots, x_d)$, a monomial in the variables
$x_1, \ldots, x_d$ is a product
$$
x^{\kappa} = x_1^{\kappa_1} \ldots x_d^{\kappa_d}.
$$
The number $|\kappa| = \kappa_1 + \ldots + \kappa_d$ is called the total degree of $x^{\kappa}$.
A polynomial $P$ in $d$ variables is a finite linear combination of monomials.

Let $\Pi^d$ denote the space of polynomials in $d$ real variables. For a given
non negative integer $n$,
let $\Pi_n^d$ denote the linear space of polynomials in several variables of total degree
at most $n$, and let $\mathcal{P}_n^d$ denote the space of homogeneous polynomials of degree
$n$. It is well known that
$$
\dim \Pi_n^d = \binom{n+d}{n} \quad \hbox{and} \quad \dim \mathcal{P} _n^d =
\binom{n+d-1}{n} = r_n^d.
$$

For $x,y  \in \mathbb{R}^d$, we use the standard notation of $\|x\|$ for the
Euclidean norm of $x$, and $ \langle x, y\rangle$ for the Euclidean product of $x$ and $y$.  The unit ball and the
unit sphere in $\mathbb{R}^d$ are
denoted, respectively, by
$$
\mathbb{B}^d =\{x\in \mathbb{R}^d: \|x\| \leqslant   1\} \qquad \textrm{and} \qquad \mathbb{S}^{d-1}=
\{\xi\in \mathbb{R}^d: \|\xi\| = 1\}.
$$
For $\mu \in \mathbb{R}$, let $W_\mu$ be the weight function defined by
$$
    W_\mu(x) = (1-\|x\|^2)^\mu, \qquad  \|x\| < 1.
$$
The function $W_\mu$ is integrable on the unit ball if $\mu > -1$, for which we denote the normalization
constant of $W_\mu$ by
\begin{equation}\label{omegamu}
\omega_\mu = \int_{{\mathbb{B}^d}}\, W_\mu(x) \, dx = \frac{\pi^{d/2}\Gamma(\mu+1)}{\Gamma(\mu + d/2 + 1)}.
\end{equation}
The weight $W_\mu$ is a radial and centrally symmetric function, that is,
$W_\mu(-x) = W_\mu(x)$, for all $x\in \mathbb{B}^d$.

Let us consider the classical inner product on the unit ball
$$
\langle f,g \rangle_\mu = \frac{1}{\omega_\mu} \int_{{\mathbb{B}^d}}\, f(x)\, g(x) \, W_\mu(x) \, dx,
$$
which is normalized so that $\langle 1,1\rangle_\mu = 1$.

A polynomial $P \in \Pi_n^d$ is called orthogonal with respect to $W_\mu$ on the ball if
$\langle P, Q\rangle_\mu =0$ for all $Q \in \Pi_{n-1}^d$.
Let $\mathcal{V} _n^d(W_\mu)$ denote the linear space of orthogonal polynomials of total
degree $n$ with respect to $W_\mu$. Then $\dim \, \mathcal{V}_n^d(W_\mu) = r_n^d.$

For $n\geqslant 0$, let $\{P^n_{\nu}(x) : |\nu|=n\}$ denote a basis of $\mathcal{V}_n^d(W_\mu)$.
Notice that every element of $\mathcal{V} _n^d(W_\mu)$ is orthogonal to polynomials of lower degree. If the
elements of the basis are also orthogonal to each other, that is,
$\langle P_\nu^n, P_\eta^n \rangle_\mu=0$ whenever $\nu \ne \eta$,
we call the basis mutually orthogonal. If, in addition,
$\langle P_\nu^n, P_\nu^n \rangle_\mu =1$, we call the basis orthonormal.

Since the weight function $W_\mu(x)$ is centrally symmetric, then an orthogonal polynomial on the ball
of degree $n$ is a sum of monomials of even degree if $n$ is even, and sum of monomials of odd degree if
$n$ is odd (\cite[p. 78]{DunklXu2014}).

\bigskip

Harmonic polynomials of degree $n$ in $d$--variables are polynomials in $\mathcal{P} _n^d$ that satisfy
the Laplace equation $\Delta Y = 0$, where
$$\Delta = \frac{\partial^2}{\partial x_1^2} + \ldots + \frac{\partial^2}{\partial x_d^2}$$
is the usual Laplace operator.

If $Y(x)$ is a harmonic polynomial of degree $n$, by Euler's equation for homogeneous polynomials,
we deduce
\begin{equation} \label{Euler}
\langle x, \nabla\rangle Y(x) = \sum_{i=1}^d x_i \frac{\partial}{\partial x_i} Y(x) = n Y(x).
\end{equation}

Let $\mathcal{H}_n^d$ denotes the space of harmonic polynomials
of degree $n$. It is well known that
$$
         a_n^d = \dim \mathcal{H}_n^d = \binom{n+d-1}{n} - \binom{n+d-3}{n-2}.
$$
Spherical harmonics are the restriction of harmonic polynomials to the unit sphere. If $Y \in
\mathcal{H}_n^d$, {then} in spherical--polar {coordinates}
$x = r \xi$, $ r = \|x\| \geqslant 0$, and $\xi \in \mathbb{S}^{d-1}$, we get $Y(x) = r^n Y(\xi)$, so that
$Y$ is uniquely determined by its restriction to the sphere. We shall also use $\mathcal{H}_n^d$ to
denote the space of spherical harmonics of degree $n$.

Let $d\sigma$ denote the surface measure on $\mathbb{S}^{d-1}$ and let $\sigma_d$ denote the surface area,
\begin{equation}\label{sigmad}
  \sigma_d = \int_{\mathbb{S}^{d-1}} d\sigma = \frac{2\, \pi^{d/2}}{\Gamma(d/2)}.
\end{equation}
Using Green's formula on the sphere it is easy to see that spherical harmonics of different degrees are
orthogonal with respect to the inner product
$$
\langle f, g \rangle_{\mathbb{S}^{d-1} } = \frac{1}{\sigma_d} \int_{\mathbb{S}^{d-1} } f(\xi) g(\xi)
d\sigma(\xi).
$$

\bigskip

In spherical--polar coordinates a mutually orthogonal
basis of $\mathcal{V}_n^d(W_\mu)$ can be given in terms of the Jacobi polynomials and spherical harmonics
(see for example, \cite{DunklXu2014}).

\begin{lemma}\label{opb}
For $n \in \mathbb{N}_0$ and $0 \leqslant   j \leqslant   n/2$, let $\{Y_\nu^{n-2j}(x): 1\leqslant   \nu
\leqslant   a_{n-2j}^d\}$ denote
an orthonormal basis for $\mathcal{H}_{n-2j}^d$. Define
\begin{equation}\label{baseP}
P_{j,\nu}^{n}(x;\mu) = P_{j}^{(\mu, \beta^n_j)}(2\,\|x\|^2 -1)\, Y_\nu^{n-2j}(x),
\end{equation}
where $\beta^n_j = n-2j + \frac{d-2}{2}$.

Then the set $\{P_{j,\nu}^{n}(x;\mu): 0 \leqslant   j \leqslant   n/2, \,1 \leqslant   \nu \leqslant   a_{n-2j}^d \}$
is a mutually
orthogonal basis of $\mathcal{V} _n^d(W_\mu)$.

More precisely,
$$
\langle P_{j,\nu}^{n}, P_{k,\eta}^{m}\rangle_\mu =  H_{j,n}^{\mu}  \delta_{n,m}\,\delta_{j,k}\,
\delta_{\nu,\eta},
$$
where $ H_{j,n}^{\mu}=\langle P_{j,\nu}^{n}, P_{j,\nu}^{n}\rangle_\mu$ is given by
$$
 H_{j,n}^{\mu} = \frac{1}{2^{\mu + \beta^n_j + 2}}\,\frac{\sigma_d}{\omega_\mu}\, h^{(\mu, \beta^n_j)}_j.
$$
\end{lemma}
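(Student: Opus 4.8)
The plan is to verify Lemma~\ref{opb} by a direct computation in spherical–polar coordinates, reducing the multivariate integral on $\mathbb{B}^d$ to a one-dimensional Jacobi integral times an integral over the sphere. Write $x = r\xi$ with $r=\|x\|\in[0,1]$ and $\xi\in\mathbb{S}^{d-1}$; then $dx = r^{d-1}\,dr\,d\sigma(\xi)$. Using the homogeneity of spherical harmonics, $Y_\nu^{n-2j}(x) = r^{n-2j}Y_\nu^{n-2j}(\xi)$, so
\begin{align*}
\langle P_{j,\nu}^n, P_{k,\eta}^m\rangle_\mu
&= \frac{1}{\omega_\mu}\int_0^1 P_j^{(\mu,\beta_j^n)}(2r^2-1)\,P_k^{(\mu,\beta_k^m)}(2r^2-1)\,
r^{(n-2j)+(m-2k)}(1-r^2)^\mu\, r^{d-1}\,dr \\
&\quad\times \int_{\mathbb{S}^{d-1}} Y_\nu^{n-2j}(\xi)\,Y_\eta^{m-2k}(\xi)\,d\sigma(\xi).
\end{align*}
The angular factor is $\sigma_d\,\langle Y_\nu^{n-2j},Y_\eta^{m-2k}\rangle_{\mathbb{S}^{d-1}}$, which vanishes unless $n-2j=m-2k$ (spherical harmonics of different degrees are orthogonal) and, within a fixed degree, equals $\sigma_d\,\delta_{\nu,\eta}$ by orthonormality of the basis $\{Y_\nu^{n-2j}\}$. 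This already forces $\delta_{\nu,\eta}$ and the condition $n-2j=m-2k$, hence $\beta_j^n=\beta_k^m=:\beta$.

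**Next I would evaluate the radial integral** under the constraint $n-2j=m-2k$. Here the exponent on $r$ becomes $2(n-2j)+d-1$, and substituting $t = 2r^2-1$ (so $r^2=(1+t)/2$, $1-r^2=(1-t)/2$, $r\,dr = dt/4$) turns
$$
\int_0^1 P_j^{(\mu,\beta)}(2r^2-1)\,P_k^{(\mu,\beta)}(2r^2-1)\,r^{2(n-2j)}(1-r^2)^\mu\,r^{d-2}\cdot r\,dr
$$
into a constant multiple of $\int_{-1}^1 P_j^{(\mu,\beta)}(t)\,P_k^{(\mu,\beta)}(t)\,(1-t)^\mu(1+t)^{(n-2j)+(d-2)/2}\,dt$. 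The key bookkeeping point is that the power of $(1+t)$ is exactly $n-2j+\tfrac{d-2}{2}=\beta$, matching the second Jacobi parameter, so this is precisely the Jacobi inner product $(P_j^{(\mu,\beta)},P_k^{(\mu,\beta)})_{[\mu,\beta]}$ up to the normalizing constant from the substitution. By orthogonality of Jacobi polynomials this vanishes unless $j=k$ — which together with $n-2j=m-2k$ gives $n=m$ — and for $j=k$ it equals $h_j^{(\mu,\beta)}$ from \eqref{normJ} times the Jacobian constant. Tracking that constant (a power of $2$ from $t=2r^2-1$ and the factor $\tfrac14$ from $r\,dr$) yields $H_{j,n}^\mu = \frac{1}{2^{\mu+\beta_j^n+2}}\frac{\sigma_d}{\omega_\mu}h_j^{(\mu,\beta_j^n)}$.

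**The only genuine subtlety — the part I would be most careful about — is confirming that $\{P_{j,\nu}^n\}$ spans all of $\mathcal{V}_n^d(W_\mu)$**, not merely that these polynomials are mutually orthogonal and orthogonal to lower degrees. This follows by a dimension count: for fixed $n$, the number of pairs $(j,\nu)$ is $\sum_{0\le j\le n/2} a_{n-2j}^d$, and the classical identity $\sum_{0\le j\le n/2} a_{n-2j}^d = r_n^d = \dim\mathcal{V}_n^d(W_\mu)$ (a telescoping of the formula $a_k^d = \binom{k+d-1}{k}-\binom{k+d-3}{k-2}$) shows the orthogonal set has full cardinality; since the $H_{j,n}^\mu$ are strictly positive for $\mu>-1$, the polynomials are linearly independent and hence form a basis. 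One should also check $P_{j,\nu}^n\in\Pi_n^d$: it is a polynomial in $\|x\|^2$ of degree $j$ times a homogeneous polynomial of degree $n-2j$, hence of total degree $n$. Assembling these pieces gives the stated lemma; everything beyond the dimension count and the change of variables is routine.
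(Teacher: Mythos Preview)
Your proof is correct. The paper does not provide its own proof of this lemma --- it is stated as a background result with a citation to \cite{DunklXu2014} --- but your spherical--polar reduction with the substitution $t=2r^{2}-1$ is precisely the standard argument, and in fact the identical radial computation appears in the paper's proof of Theorem~3.1 when evaluating the integral $I_{1}$ (see \eqref{i1}); your additional dimension count to confirm spanning is a welcome detail that the paper omits.
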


\bigskip

\section{A Sobolev inner product on the ball}
\setcounter{equation}{0}

Let us define the Sobolev inner product
\begin{equation}\label{SBIP}
\langle f, g \rangle_\mu^S = \frac{1}{\omega_\mu} \int_{\mathbb{B}^d} f(x) g(x) W_\mu(x)\, dx +
\frac{\lambda}{\sigma_{d}} \int_{\mathbb{S}^{d-1}} \frac{\partial f}{\partial \mathbf{n}}(\xi)
\frac{\partial g}{\partial \mathbf{n}}(\xi) d\sigma(\xi),
\end{equation}
where $W_\mu(x)=(1-\|x\|^2)^\mu$, $\mu > -1$, is the classical weight function on the ball, $\omega_\mu$
and  $\sigma_{d}$
are given by (\ref{omegamu}) and (\ref{sigmad}), respectively, and $\frac{\partial}{\partial \mathbf{n}}$
stands for the outward normal derivative operator, which on the sphere $\mathbb{S}^{d-1}$ is given by
$$
\frac{\partial f}{\partial \mathbf{n}} = \sum_{i=1}^d x_i \frac{\partial f}{\partial x_i}.
$$

We observe that the above inner product is centrally symmetric, in the sense that
$\langle x^\kappa, x^\tau \rangle_\mu^S = 0$ whenever $|\kappa|+|\tau|$ odd. This implies that
an orthogonal polynomial of degree $n$ is a sum of monomials of even degree
if $n$ is even, and a sum of monomials of odd degree if $n$ is odd.

In next theorem we will construct a mutually orthogonal basis relative to the previous
Sobolev inner product, which will be given explicitly in terms of spherical harmonics and
a family of Sobolev orthogonal polynomials in one variable.

\begin{theorem}
Let $\{q_j^{(\alpha,\beta;M)}(t)\}_{j\geqslant 0}$ denote
the univariate Sobolev orthogonal po\-ly\-no\-mials orthogonal with respect to the Sobolev inner product
\begin{equation}\label{eq:op1d}
(f,g)_{[\alpha,\beta;M]}^S = \int_{-1}^{1} f(t) g(t) (1-t)^\alpha (1+t)^\beta\, dt + {\bf f}(1)  M  {\bf g}(1)^{t},
\end{equation}
where ${\bf f}(1) = (f(1), f'(1))$ and $M$ is a $2\times 2$ symmetric positive semidefinite matrix.
Let $\{Y_{\nu}^{n-2j}(x) \, : \, 1\leqslant  \nu\leqslant   a_{n-2j}^d\}$ be an orthonormal basis of the  spherical
harmonics $\mathcal{H}_{n-2j}^d$.
Let us define the polynomials in $d$ variables
\begin{equation}\label{eq:sovpol}
Q_{j,\nu}^n(x) = q_j^{(\mu,\beta_j^n;M_{n-2j})}(2\|x\|^2-1) Y_{\nu}^{n-2j}(x),
\end{equation}
with $\beta_j^n=n-2j+\delta$, $\ \delta =  \frac{d-2}{2}$,
\begin{equation}\label{a0}
A_{0}=\lambda \,2^{\delta +\mu +2}\frac{\omega _{\mu }}{\sigma _{d}},
\end{equation}
and
\begin{equation}\label{defM}
M_{n-2j} = 2^{n-2j}\,A_0\,\left[\begin{array}{cc}(n-2j)^2 & 4 (n-2j) \\
4 (n-2j) &16\end{array}\right].
\end{equation}
Then, for $n\geqslant0$, the set $\{Q_{j,\nu}^n(x)\,:\,0\leqslant   j\leqslant   n/2,\,\,1\leqslant  \nu\leqslant   a_{n-2j}^d\}$ is
a mutually orthogonal basis of $\mathcal{V}_n^d(W_\mu,S)$, the linear space of polynomials of degree $n$
which are orthogonal with respect to the Sobolev inner product (\ref{SBIP}).

Moreover,
\begin{equation}\label{tilde--norm}
\tilde{H}^\mu_{j,\nu} = \langle Q_{j,\nu}^n, Q_{j,\nu}^n \rangle_\mu^S = \frac{\lambda}{2^{n-2j}\,A_0}\,
\tilde{h}^{(\mu,\beta_j^n;M_{n-2j})}_j,
\end{equation}
where\quad $\tilde{h}_j^{(\mu,\beta;M)} = (q_j^{(\mu,\beta;M)},\,q_j^{(\mu,\beta;
M)})_{[\mu,\beta;M]}^S.$
\end{theorem}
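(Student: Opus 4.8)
The strategy is to reduce the $d$-variable orthogonality to the known one-variable Sobolev orthogonality of the $q_j^{(\alpha,\beta;M)}$, exactly as Lemma~\ref{opb} reduces classical ball orthogonality to Jacobi orthogonality. First I would compute the outward normal derivative of the generic basis element $Q_{j,\nu}^n$ on the sphere. Writing $Q_{j,\nu}^n(x)=q_j(2\|x\|^2-1)\,Y_\nu^{n-2j}(x)$ with $Y=Y_\nu^{n-2j}$ homogeneous of degree $m:=n-2j$, the product rule gives
\begin{equation*}
\frac{\partial Q_{j,\nu}^n}{\partial\mathbf{n}}(x)=\sum_{i=1}^d x_i\frac{\partial}{\partial x_i}\Bigl(q_j(2\|x\|^2-1)\,Y(x)\Bigr)
= 4\|x\|^2\,q_j'(2\|x\|^2-1)\,Y(x)+q_j(2\|x\|^2-1)\,\langle x,\nabla\rangle Y(x),
\end{equation*}
and on $\mathbb{S}^{d-1}$, using $\|x\|=1$ together with Euler's relation \eqref{Euler} $\langle x,\nabla\rangle Y=mY$, this collapses to $\bigl(4q_j'(1)+m\,q_j(1)\bigr)Y(\xi)=:\bigl(\mathbf{q}_j(1)\cdot(m,4)^t\bigr)\,Y(\xi)$ where $\mathbf{q}_j(1)=(q_j(1),q_j'(1))$.

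Next I would substitute into the Sobolev inner product \eqref{SBIP}. For the bulk term, separating radial and angular parts in spherical-polar coordinates $x=r\xi$ gives, just as in Lemma~\ref{opb},
\begin{equation*}
\frac{1}{\omega_\mu}\int_{\mathbb{B}^d}Q_{j,\nu}^nQ_{k,\eta}^m W_\mu\,dx
=\frac{\sigma_d}{\omega_\mu}\,\delta_{n,m}\,\delta_{\nu,\eta}\,\frac{1}{2^{\mu+\beta_j^n+2}}\int_{-1}^1 q_j(t)q_k(t)(1-t)^\mu(1+t)^{\beta_j^n}\,dt,
\end{equation*}
after the change of variable $t=2r^2-1$ and orthonormality of the $Y_\nu^{m}$ on the sphere (note $\beta_j^n=\beta_k^m$ exactly when $n-2j=m-2k$, which the spherical-harmonic orthogonality already forces). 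The surface term, using the computation above and orthonormality of spherical harmonics, equals
\begin{equation*}
\frac{\lambda}{\sigma_d}\int_{\mathbb{S}^{d-1}}\frac{\partial Q_{j,\nu}^n}{\partial\mathbf{n}}\frac{\partial Q_{k,\eta}^m}{\partial\mathbf{n}}\,d\sigma
=\lambda\,\delta_{n-2j,\,m-2k}\,\delta_{\nu,\eta}\,\bigl(\mathbf{q}_j(1)\cdot(m,4)^t\bigr)\bigl(\mathbf{q}_k(1)\cdot(m,4)^t\bigr),
\end{equation*}
with $m=n-2j$. The key algebraic point is then to pull out the common factor $\frac{\lambda}{2^{m}A_0}$ (with $A_0$ as in \eqref{a0}) so that the surface term becomes $\frac{\lambda}{2^m A_0}\,\mathbf{q}_j(1)\,M_{m}\,\mathbf{q}_k(1)^t$ with $M_m=2^m A_0\begin{bmatrix}m^2&4m\\4m&16\end{bmatrix}$ as in \eqref{defM} — this is just the rank-one matrix $2^m A_0\,(m,4)^t(m,4)$ — and simultaneously the bulk term becomes $\frac{\lambda}{2^m A_0}\int_{-1}^1 q_jq_k\,(1-t)^\mu(1+t)^{\beta_j^n}dt$ once one checks that $\frac{\sigma_d}{\omega_\mu}\cdot\frac{1}{2^{\mu+\beta_j^n+2}}=\frac{\lambda}{2^m A_0}$; since $\beta_j^n=m+\delta$ and $\delta=\frac{d-2}{2}$, this is precisely the definition \eqref{a0} of $A_0$. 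Hence the whole Sobolev inner product equals $\frac{\lambda}{2^m A_0}\,(q_j,q_k)^S_{[\mu,\beta_j^n;M_m]}$ times $\delta_{n,m}\delta_{j,k}\delta_{\nu,\eta}$-type symbols, which by one-variable Sobolev orthogonality of $\{q_j^{(\mu,\beta_j^n;M_m)}\}$ vanishes unless $j=k$, and equals $\frac{\lambda}{2^m A_0}\tilde h_j^{(\mu,\beta_j^n;M_m)}$ when $j=k$, $\nu=\eta$. That gives both the mutual orthogonality and the norm formula \eqref{tilde--norm}. Finally, to see that $\{Q_{j,\nu}^n\}$ is a \emph{basis} of $\mathcal{V}_n^d(W_\mu,S)$ one counts dimensions: the degree of $Q_{j,\nu}^n$ is $2j+(n-2j)=n$, the polynomials are linearly independent (distinct $(j,\nu)$ give either different spherical-harmonic components or different radial degrees), and their number $\sum_{0\le j\le n/2}a_{n-2j}^d$ matches $\dim\mathcal{V}_n^d=r_n^d$ by the classical decomposition $\mathcal{P}_n^d=\bigoplus_j\|x\|^{2j}\mathcal{H}_{n-2j}^d$; central symmetry of the inner product guarantees orthogonality to all lower-degree polynomials.

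\textbf{Main obstacle.} The conceptual steps are routine; the real work is bookkeeping the constants so that the single prefactor $\frac{\lambda}{2^{n-2j}A_0}$ cleanly absorbs both the Jacobi normalization $\frac{\sigma_d}{\omega_\mu}2^{-\mu-\beta_j^n-2}$ from the bulk integral and the factor $2^{n-2j}A_0$ hidden in $M_{n-2j}$, and verifying that the surface contribution is exactly the rank-one form $\mathbf{q}_j(1)M_{n-2j}\mathbf{q}_j(1)^t$ coming from the vector $(n-2j,4)$. In other words, the substance is checking that definitions \eqref{a0} and \eqref{defM} are precisely the ones that make the reduction to \eqref{eq:op1d} exact; everything else follows the template of Lemma~\ref{opb}.
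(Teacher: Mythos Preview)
Your argument is correct and follows exactly the paper's proof: split $\langle Q_{j,\nu}^n,Q_{k,\eta}^{m}\rangle_\mu^S$ into the bulk and surface integrals, handle the bulk via spherical--polar coordinates plus orthonormality of the spherical harmonics and the change $t=2r^2-1$, handle the surface term via the product rule and Euler's relation \eqref{Euler}, and then observe that the definitions \eqref{a0}--\eqref{defM} are exactly what makes the common prefactor $\lambda/(2^{n-2j}A_0)$ reduce both pieces to the one--variable Sobolev product \eqref{eq:op1d}. Two minor cleanups: you overload the symbol $m$ (it is simultaneously the total degree of $Q_{k,\eta}^{m}$ and your shorthand $n-2j$), and the Kronecker symbol produced by the spherical--harmonic integral is $\delta_{n-2j,\,m-2k}$, not $\delta_{n,m}$---the latter only follows after you also use the one--variable orthogonality $\delta_{j,k}$; likewise it is this full computation, not central symmetry alone, that gives orthogonality of $Q_{j,\nu}^n$ to all lower--degree polynomials.
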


\bigskip

\begin{proof}
In order to check the orthogonality, we need to compute the product
\begin{eqnarray}
\langle Q_{j,\nu}^n, Q_{k,\eta}^m \rangle_\mu^S &=& \frac{1}{\omega_\mu} \int_{\mathbb{B}^d} Q_{j,\nu}^n(x)
Q_{k,\eta}^m(x) W_\mu(x)\, dx \label{pr1}\\
&~& + \frac{\lambda}{\sigma_{d}} \int_{\mathbb{S}^{d-1}} \frac{\partial Q_{j,\nu}^n}{\partial \mathbf{n}}(\xi)
 \frac{\partial Q_{k,\eta}^m}{\partial \mathbf{n}}(\xi) \, d\sigma(\xi). \nonumber
\end{eqnarray}

Let us start with the computation of the first integral.
$$
I_1=\frac{1}{\omega_\mu} \int_{\mathbb{B}^d} Q_{j,\nu}^n(x) Q_{k,\eta}^m(x) W_\mu(x)\, dx.
$$
Using spherical--polar coordinates and the orthogonality of the
spherical harmonics we obtain
\begin{eqnarray*}
I_1 &=& \frac{\sigma_d}{\omega_\mu} \int_0^1 q_j^{(\mu,\beta_j^n)}(2r^2-1)
q_k^{(\mu,\beta_k^m)}(2r^2-1)(1-r^2)^\mu\,  r^{n-2j+m-2k+d-1}  \, dr
\\
& ~&\times \, \delta_{n-2j,m-2k} \delta_{\nu\eta}
\\
&=& \frac{\sigma_d}{\omega_\mu} \int_0^1 q_j^{(\mu,\beta_j^n)}(2r^2-1)
q_k^{(\mu,\beta_j^n)}(2r^2-1) (1-r^2)^\mu\,  r^{2(n-2j)+d-1} \, dr
\\
&~& \times \, \delta_{n-2j,m-2k} \delta_{\nu\eta},
\end{eqnarray*}
where $q_j^{(\mu,\beta_j^n)}\equiv q_j^{(\mu,\beta_j^n;M_{n-2j})}$ and
$q_k^{(\mu,\beta_k^m)} \equiv q_k^{(\mu,\beta_k^m;M_{m-2k})}$.

Finally, the change of variables $t=2r^2-1$ moves the integral to the interval $[-1,1]$,
\begin{eqnarray}
I_1 &=& \frac{1}{2^{\beta_j^n+\mu+2}} \frac{\sigma_d}{\omega_\mu} \int_{-1}^1 q_j^{(\mu,\beta_j^n)}(t)
 q_k^{(\mu,\beta_j^n)}(t) (1-t)^\mu (1+t)^{\beta_j^n}  \, dt  \label{i1}\\
&~& \times \delta_{n-2j,m-2k} \delta_{\nu\eta}. \nonumber
\end{eqnarray}

Let us now compute the second integral in (\ref{pr1}),
$$
I_2 = \frac{\lambda}{\sigma_{d}} \int_{\mathbb{S}^{d-1}}
\frac{\partial Q_{j,\nu}^n}{\partial \mathbf{n}}(\xi) \frac{\partial Q_{k,\eta}^m}{\partial \mathbf{n}}(\xi)
\, d\sigma(\xi).
$$
In order to easily get that integral, we need some previous results.

Computing the normal derivatives
$$
\frac{\partial }{\partial \mathbf{n}} \Big( q_j^{(\mu,\beta;M)}(2\|\xi\|^2-1) \Big)
=
4 \|\xi\|^2 (q_j^{(\mu,\beta;M)})' (2\|\xi\|^2-1),
$$
and using Euler's formula (\ref{Euler}), we deduce
\begin{eqnarray*}
\lefteqn{\frac{\partial }{\partial \mathbf{n}} \Big( q_j^{(\mu,\beta;M)}(2\|\xi\|^2-1) \, Y_{\nu}^{n-2j}(\xi) \Big)
}\\
&=& \Big(4 \|\xi\|^2 (q_j^{(\mu,\beta;M)})' (2\|\xi\|^2-1) + (n-2j) q_j^{(\mu,\beta;M)}(2\|\xi\|^2-1)\Big)\,
 Y_{\nu}^{n-2j}(\xi).
\end{eqnarray*}

Thus, the second integral splits into four terms,
\begin{eqnarray*}
I_2
&=& \frac{\lambda}{\sigma_{d}}
\Big(16 \,q_j'(1) \,q_k'(1) + 4\, (n-2j) \,
q_j'(1) \,q_k(1)
 + 4 \,(n-2j) \,q_j(1) \,q_k'(1) \\
&~& + (n-2j)^2\,
q_j(1) \,q_k(1)  \Big) \,\int_{\mathbb{S}^{d-1}} Y^{n-2j}_{\nu}(\xi)\, Y^{n-2k}_{\eta}(\xi)\, d\sigma(\xi) \\
&=& \lambda \,\Big( 16\, (q_j'(1)\, q_k'(1) +
4 \,(n-2j) \,q_j'(1) \,q_k(1)
+ 4 \,(n-2j) \,q_j(1) \,q_k'(1) \\
& ~&+
(n-2j)^2 \,q_j(1) \,q_k(1)  \Big)
\, \delta_{n-2j,m-2k} \, \delta_{\nu,\eta},
\end{eqnarray*}
where we have omitted the superscript in $q_j^{(\mu,\beta_j^n;M_{n-2j})}$ for brevity.

Finally, this can be written in matrix form as follows
\begin{equation} \label{i2}
I_2 = \lambda \mathbf{q}_j^{(\mu,\beta_j^n;M_{n-2j})}(1 )\, \tilde{M}_{n-2j} \,
\mathbf{q}_k^{(\mu,\beta_j^n;M_{n-2j})}(1)^{t}
\end{equation}
where $\mathbf{q}_j^{(\mu,\beta_j^n;M_{n-2j})}(1 ) = \Big(q_j^{(\mu,\beta_j^n;M_{n-2j})}(1),
(q_j^{(\mu,\beta_j^n;M_{n-2j})})'(1)\Big)$
and
$$ \tilde{M}_{n-2j} = \left[
\begin{array}{cc} (n-2j)^2 & 4(n-2j) \\ 4(n-2j) & 16\end{array}\right].
$$
Observe that $M_{n-2j}=2^{n-2j}\,A_0\,\tilde{M}_{n-2j}$.

To end the proof, we just have to take together (\ref{i1}) and (\ref{i2}) to get the value of (\ref{pr1})
in terms of the Sobolev inner product (\ref{eq:op1d}) as
\begin{eqnarray*}
\langle Q_{j,\nu}^n, Q_{k,\eta}^m \rangle_\mu^S &=& \frac{\lambda}{2^{n-2j}\,A_0}\,
\Big(q_j^{(\mu,\beta_j^n;M_{n-2j})},\,q_k^{(\mu,\beta_j^n;M_{n-2j})}\Big)_{[\mu,\beta_j^n;M_{n-2j}]}^S \\
&~& \times \delta_{n-2j,m-2k}
\, \delta_{\nu,\eta}.
\end{eqnarray*}
Then the result follows from the orthogonality of the univariate Sobolev orthogonal polynomials.

\end{proof}

\bigskip

\section{The univariate non--diagonal Sobolev inner product}
\setcounter{equation}{0}

In this section we will explore some properties of the univariate Sobolev orthogonal polynomials involved
in (\ref{eq:sovpol}).

Let $(\cdot,\cdot)_{[\alpha,\beta;M]}^S$ be the non--diagonal Sobolev inner product defined in (\ref{eq:op1d})
by
$$
(f,g)_{[\alpha,\beta;M]}^S = \int_{-1}^1 f(t)\, g(t) (1-t)^\alpha (1+t)^\beta dt +
\mathbf{f}(1)  M  \mathbf{g}(1)^{t},
$$
where $M$ is a positive semidefinite matrix and $\mathbf{f}(1)=(f(1), f'(1))$.

Let $\{q_n^{(\alpha, \beta;M)}(t)\}_{n\geqslant 0}$ be the orthogonal polynomials with respect to this inner
product, normalized with leading coefficient
$k_n^{(\alpha, \beta)}$ given in (\ref{leadingcoef}). Some properties for the monic orthogonal polynomials
with respect to this inner product can be found in \cite{AlfaroMarcellanRezolaRonveaux1995,DuenasGarza2013}.

In what follows, when not confusing, we will simplify the notations $q_n^{(\alpha, \beta;M)}\equiv
q_n^{(\alpha, \beta)}\equiv q_n$, and $(f,g)_{[\alpha,\beta;M]}^S\equiv(f,g)^S$.

These univariate Sobolev orthogonal polynomials can be expressed in terms of the classical Jacobi polynomials as follows.

\begin{lemma}\label{S1v}
For $\alpha,\beta>-1$, it holds
\begin{equation}\label{relq13}
q_j^{(\alpha, \beta)}(t)= b_{j,j}^{(\alpha, \beta)}\, P_j^{(\alpha+2, \beta)}(t)+b_{j,j-1}^{(\alpha, \beta)}
\, P_{j-1}^{(\alpha+2, \beta)}(t)+b_{j,j-2}^{(\alpha, \beta)}\, P_{j-2}^{(\alpha+2, \beta)}(t),
\end{equation}
where
\begin{eqnarray*}
b_{j,j}^{(\alpha, \beta)}&=&\frac{(j+\alpha+\beta+2)(j+\alpha+\beta+1)}{(2j+\alpha+\beta+2)(2j+\alpha+\beta+1)}, \\
b_{j,j-1}^{(\alpha, \beta)}&=& \frac{(j+\alpha+\beta+1)}{2j+\alpha+\beta}
\left( -\frac{2(j+\beta)}{2j+\alpha+\beta+2}-c^j_{j-1}\right),\\
b_{j,j-2}^{(\alpha, \beta)} &=&\frac{j+\beta-1}{2j+\alpha+\beta}\left(\frac{j+\beta}
{2j+\alpha+\beta+1}+c^j_{j-2}\right),
\end{eqnarray*}
with
\begin{eqnarray*}
c^j_{j-1}&=&2^{-\alpha-\beta-2}\frac{\Gamma(j+\alpha+\beta+1)}
{\Gamma(\alpha+1)\Gamma(j+\beta)}\mathbf{P}_j(1)\, \Lambda_{j-1} \left[\begin{array}{cc} 2 \\
(j-1)(j+\alpha+\beta)
\end{array} \right],\\
c^j_{j-2}&=&2^{-\alpha-\beta-2}\frac{\Gamma(j+\alpha+\beta+1)}
{\Gamma(\alpha+1)\Gamma(j+\beta)}\mathbf{P}_j(1) \, \Lambda_{j-1}  \left[\begin{array}{cc} 2 \\ j
(j+\alpha+\beta+1)\end{array}\right],
\end{eqnarray*}
and
\begin{eqnarray}
&&\mathbf{P}_j(1) \equiv\mathbf{P}_j^{(\alpha,\beta)}(1) = \left(P_j^{(\alpha,\beta)}(1),
(P_j^{(\alpha,\beta)})'(1)\right),
 \nonumber\\
&&\Lambda_{j-1} \equiv\Lambda^{(\alpha,\beta;M)}_{j-1} = \left(I+M \mathcal{K}_{j-1}\right)^{-1} M,
\label{defLambda}\\
&&\mathcal{K}_{j-1} \equiv \mathcal{K}^{(\alpha,\beta)}_{j-1}= \left[
\begin{array}{cc}
K_{j-1}(1,1) & K_{j-1}^{(1,0)}(1,1)\\
K_{j-1}^{(0,1)}(1,1) & K_{j-1}^{(1,1)}(1,1)
\end{array}\right]. \label{kappa}
\end{eqnarray}
\end{lemma}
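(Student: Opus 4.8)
The plan is to derive the connection formula \eqref{relq13} by expanding the Sobolev polynomial $q_j^{(\alpha,\beta)}$ in the basis of shifted Jacobi polynomials $\{P_k^{(\alpha+2,\beta)}\}_{k=0}^{j}$ and showing that all but the top three coefficients vanish. First I would recall the standard device for non-diagonal Sobolev-type orthogonality: since the Sobolev inner product differs from the Jacobi inner product $(\cdot,\cdot)_{[\alpha,\beta]}$ only by the rank-$\le 2$ perturbation $\mathbf{f}(1)\,M\,\mathbf{g}(1)^t$ supported at $t=1$, one writes
\begin{equation*}
q_j^{(\alpha,\beta)}(t) = P_j^{(\alpha,\beta)}(t) - \mathbf{P}_j^{(\alpha,\beta)}(1)\,\Lambda_{j-1}\,\mathbf{K}_{j-1}(t)^t,
\end{equation*}
where $\mathbf{K}_{j-1}(t) = \bigl(K_{j-1}(t,1;\alpha,\beta),\,K_{j-1}^{(0,1)}(t,1;\alpha,\beta)\bigr)$ is the vector of the Jacobi kernel and its derivative in the second variable, and $\Lambda_{j-1}=(I+M\mathcal K_{j-1})^{-1}M$ as in \eqref{defLambda}. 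This identity is verified by checking that the right-hand side is monic of degree $j$ with the correct leading coefficient $k_j^{(\alpha,\beta)}$ (the kernel terms have degree $j-1$), and that it is $(\cdot,\cdot)^S$-orthogonal to $\Pi_{j-1}$: testing against an arbitrary $p\in\Pi_{j-1}$ and using the reproducing property of $K_{j-1}$ together with $\mathbf{p}(1)=\mathbf{p}(1)\mathcal K_{j-1}^t$... one solves a $2\times2$ linear system whose matrix is exactly $I+M\mathcal K_{j-1}$, invertible because $M$ is positive semidefinite and $\mathcal K_{j-1}$ is positive definite.

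Next I would convert this kernel representation into the stated three-term formula. The point is that $K_{j-1}(t,1)$ and $K_{j-1}^{(0,1)}(t,1)$ are explicitly polynomials of degree $j-1$: by \eqref{K(t,1)}, $K_{j-1}(t,1)$ is a multiple of $P_{j-1}^{(\alpha+1,\beta)}(t)$, and by \eqref{K01(t,1)}, $K_{j-1}^{(0,1)}(t,1)$ is a combination of $P_{j-1}^{(\alpha+2,\beta)}(t)$ and $P_{j-2}^{(\alpha+2,\beta)}(t)$. Using \eqref{JacAF2} to pass from $P_{j-1}^{(\alpha+1,\beta)}$ to the parameters $(\alpha+2,\beta)$, one sees that the entire correction term $\mathbf{P}_j(1)\,\Lambda_{j-1}\,\mathbf{K}_{j-1}(t)^t$ lies in $\mathrm{span}\{P_{j-1}^{(\alpha+2,\beta)},\,P_{j-2}^{(\alpha+2,\beta)}\}$. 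Likewise, by two applications of \eqref{JacAF2}, $P_j^{(\alpha,\beta)}(t)$ expands in $\{P_j^{(\alpha+2,\beta)},\,P_{j-1}^{(\alpha+2,\beta)},\,P_{j-2}^{(\alpha+2,\beta)}\}$ with the leading coefficient $b_{j,j}^{(\alpha,\beta)}=\frac{(j+\alpha+\beta+2)(j+\alpha+\beta+1)}{(2j+\alpha+\beta+2)(2j+\alpha+\beta+1)}$. Collecting terms gives \eqref{relq13} with $b_{j,j-1}^{(\alpha,\beta)}$ and $b_{j,j-2}^{(\alpha,\beta)}$ obtained by subtracting the kernel contributions; the scalars $c_{j-1}^j,c_{j-2}^j$ in the statement are precisely the two entries of the row vector $\mathbf{P}_j(1)\Lambda_{j-1}$ after the normalizing constants from \eqref{K(t,1)}–\eqref{K01(t,1)} are extracted, which matches the displayed formulas once one reads off $\Lambda_{j-1}$ against the column vectors $\bigl(2,\,(j-1)(j+\alpha+\beta)\bigr)^t$ and $\bigl(2,\,j(j+\alpha+\beta+1)\bigr)^t$ coming from the coefficients in \eqref{K01(t,1)}.

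The routine but delicate part is the bookkeeping of constants: one must carefully track the factor $2^{-\alpha-\beta-2}\Gamma(j+\alpha+\beta+1)/(\Gamma(\alpha+1)\Gamma(j+\beta))$, reconcile the normalization \eqref{jac-norm} (these $P_j^{(\alpha,\beta)}$ are not monic), and verify that the $(\alpha+2,\beta)$-expansions of $P_{j-1}^{(\alpha+1,\beta)}$ via \eqref{JacAF2} and of $K_{j-1}^{(0,1)}(t,1)$ via \eqref{K01(t,1)} combine with the right signs. I expect the main obstacle to be exactly this: showing that, after substituting the Lemma~\ref{Kn} values of $K_{j-1}(1,1)$, $K_{j-1}^{(0,1)}(1,1)$, $K_{j-1}^{(1,1)}(1,1)$ into $\Lambda_{j-1}=(I+M\mathcal K_{j-1})^{-1}M$ and multiplying by $\mathbf{P}_j(1)$, the resulting two scalars really collapse to the compact closed forms $c_{j-1}^j$ and $c_{j-2}^j$; this is a $2\times2$ matrix inversion followed by nontrivial Gamma-function simplification. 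Everything else — degree count, monicity, the reproducing-kernel orthogonality argument, and the repeated use of \eqref{JacAF2} — is structural and should go through cleanly.
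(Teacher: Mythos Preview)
Your approach is correct and is essentially the paper's own proof: the paper also uses the representation $q_j^{(\alpha,\beta)}=P_j^{(\alpha,\beta)}-\mathbf P_j(1)\Lambda_{j-1}\mathbf K_{j-1}(t,1)$ (which it simply cites from \cite{AlfaroMarcellanRezolaRonveaux1995} rather than re-verifying), applies \eqref{JacAF2} twice to $P_j^{(\alpha,\beta)}$, and then substitutes \eqref{K(t,1)} and \eqref{K01(t,1)} for the kernel entries. The only extra ingredient in the paper is a preliminary orthogonality argument --- writing $b_{j,i}^{(\alpha,\beta)}h_i^{(\alpha+2,\beta)}=(q_j,(1-t)^2P_i^{(\alpha+2,\beta)})^S$ and using that $(1-t)^2$ has a double zero at $t=1$ --- to see directly that $b_{j,i}=0$ for $i<j-2$; you obtain the same conclusion as a by-product of the explicit kernel formulas, which is equally valid.

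One remark: your anticipated ``main obstacle'' is not actually present. The quantities $c_{j-1}^j,\,c_{j-2}^j$ in the statement are \emph{not} closed-form scalars obtained after inverting $I+M\mathcal K_{j-1}$ and simplifying Gammas; they are left expressed in terms of the unexpanded matrix $\Lambda_{j-1}$. So the final step is purely the bookkeeping you describe --- factoring the common constant $2^{-\alpha-\beta-2}\Gamma(j+\alpha+\beta+1)/(\Gamma(\alpha+1)\Gamma(j+\beta))$ out of the column vectors coming from \eqref{K(t,1)}--\eqref{K01(t,1)} --- and no $2\times2$ inversion or Gamma reduction is required for this lemma.
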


\bigskip

\begin{proof} If we expand $q_j^{(\alpha,\beta)}$ in terms of Jacobi polynomials,
$$
q_j^{(\alpha,\beta)}(t)=\sum_{i=0}^j b_{j,i}^{(\alpha,\beta)} P_{i}^{(\alpha+2, \beta)}(t),
$$
and using standard techniques
$$b_{j,i}^{(\alpha,\beta)}  = \frac{(q_j^{(\alpha,\beta)}(t),
P_{i}^{(\alpha+2, \beta)}(t))_{[\alpha+2,\beta]}}{h_{i}^{(\alpha+2, \beta)}}=
\frac{(q_j^{(\alpha,\beta)}(t), P_{i}^{(\alpha, \beta)}(t)\,
(1-t)^2)^S}{h_{i}^{(\alpha+2, \beta)}}.$$
Then, for $i<n-2$, $b_{j,i}^{(\alpha,\beta)}=0$, and so relation (\ref{relq13}) holds.
Moreover, the coefficient
$b_{j,j}^{(\alpha,\beta)}$ can be determined using the leading coefficients of the polynomials $q_j^{(\alpha,
\beta)}(t)$ and $P_j^{(\alpha+2, \beta)}(t)$ both given by (\ref{leadingcoef})
$$
b_{j,j}^{(\alpha,\beta)}=\frac{k_j^{(\alpha,\beta)}}{k_j^{(\alpha+2,\beta)}}=\frac{(j+\alpha+\beta+2)
(j+\alpha+\beta+1)}{(2j+\alpha+\beta+2)(2j+\alpha+\beta+1)}.
$$
We determine the other two coefficients using Proposition 2 in \cite{AlfaroMarcellanRezolaRonveaux1995},
\begin{equation}\label{MM}
q_j^{(\alpha,\beta)}(t)=P_j^{(\alpha,\beta)}(t)- \mathbf{P}_j^{(\alpha,\beta)}(1) \,\Lambda_{j-1}
\mathbf{K}^{\alpha,\beta}_{j-1}(t,1),
\end{equation}
where
\begin{equation}\label{Kalpha}
\mathbf{K}^{\alpha,\beta}_{j-1}(t,1)=\left[
\begin{array}{c}
K_{j-1}(t,1;\alpha,\beta) \\
K_{j-1}^{(0,1)}(t,1;\alpha,\beta)
\end{array}\right].
\end{equation}
If we apply equation (\ref{JacAF2}) twice we obtain
\begin{eqnarray*}
P_j^{(\alpha,\beta)}(t)=\frac{(j+\alpha+\beta+1)(j+\alpha+\beta+2)}{(2j+\alpha+\beta+1)(2j+\alpha+\beta+2)}
\, P_j^{(\alpha+2,\beta)}(t) \\
-\frac{2(j+\alpha+\beta+1)(j+\beta)}{(2j+\alpha+\beta+2)(2j+\alpha+\beta)}\, P_{j-1}^{(\alpha+2,\beta)}(t) \\
+ \frac{(j+\beta)(j+\beta-1)}{(2j+\alpha+\beta+1)(2j+\alpha+\beta)}\, P_{j-2}^{(\alpha+2,\beta)}(t).
\end{eqnarray*}
Substituting in (\ref{MM}), and using (\ref{K(t,1)}) and (\ref{K01(t,1)}), the result follows.

\end{proof}

\bigskip

For $n\geqslant 0$, we denote by
$$
\tilde{K}_n(t,u)=\sum_{j=0}^n \frac{q_j^{(\alpha,\beta)}(t)\,q_j^{(\alpha,\beta)}(u)}{
\tilde{h}_j^{(\alpha,\beta)}}
$$
the reproducing kernels associated with the polynomials $q_j^{(\alpha,\beta)}(t)$.

We need to establish a relationship between these kernels and
the kernels of Jacobi polynomials defined in Section \ref{Jacobi}. To this end we need the following lemmas.

\begin{lemma} \label{lambda}
The matrix $\Lambda_j=\left(I+M \mathcal{K}_{j}\right)^{-1}\,M$ is symmetric.
Moreover,
$$
\Lambda_{j-1} \mathbf{P}_j(1)^t h_j^{-1} \mathbf{P}_j(1) \Lambda_j=\Lambda_{j-1} - \Lambda_j,
$$
where $h_j = h_j^{(\alpha, \beta)}$ is given in (\ref{normJ}).
\end{lemma}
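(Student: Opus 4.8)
The plan is to dispose of the symmetry of $\Lambda_j$ first, then to record the rank-one recursion satisfied by the matrices $\mathcal{K}_j$, and finally to combine the defining identities for $\Lambda_{j-1}$ and $\Lambda_j$ by a short matrix manipulation.

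For the symmetry I would use that $M$ is symmetric by hypothesis and that $\mathcal{K}_j$ is symmetric because $K_j(t,u)$ is a symmetric function, so $K_j^{(1,0)}(1,1)=K_j^{(0,1)}(1,1)$. Transposing $\Lambda_j=(I+M\mathcal{K}_j)^{-1}M$ gives $\Lambda_j^{t}=M(I+\mathcal{K}_jM)^{-1}$, so it is enough to verify $(I+M\mathcal{K}_j)^{-1}M=M(I+\mathcal{K}_jM)^{-1}$; this follows at once from $(I+M\mathcal{K}_j)M=M+M\mathcal{K}_jM=M(I+\mathcal{K}_jM)$ after multiplying on the left by $(I+M\mathcal{K}_j)^{-1}$ and on the right by $(I+\mathcal{K}_jM)^{-1}$. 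Invertibility of $I+M\mathcal{K}_j$ is ensured by the existence of $q_j^{(\alpha,\beta;M)}$, cf. \cite{AlfaroMarcellanRezolaRonveaux1995}; alternatively, $\mathcal{K}_j$ is positive definite and $M$ positive semidefinite, so $M\mathcal{K}_j$ has nonnegative spectrum.

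Next I would record that, from $K_j(t,u)=K_{j-1}(t,u)+h_j^{-1}P_j^{(\alpha,\beta)}(t)P_j^{(\alpha,\beta)}(u)$, differentiating with respect to $t$ and/or $u$ and setting $t=u=1$ gives, entry by entry,
\begin{equation*}
\mathcal{K}_j=\mathcal{K}_{j-1}+h_j^{-1}\,\mathbf{P}_j(1)^{t}\,\mathbf{P}_j(1),
\end{equation*}
a rank-one perturbation of $\mathcal{K}_{j-1}$. Writing $D_j:=h_j^{-1}\mathbf{P}_j(1)^{t}\mathbf{P}_j(1)$, so that $\mathcal{K}_j=\mathcal{K}_{j-1}+D_j$, the asserted identity becomes $\Lambda_{j-1}D_j\Lambda_j=\Lambda_{j-1}-\Lambda_j$.

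To finish, I would start from the defining relations $(I+M\mathcal{K}_j)\Lambda_j=M$ and $(I+M\mathcal{K}_{j-1})\Lambda_{j-1}=M$, subtract them, and substitute $\mathcal{K}_j=\mathcal{K}_{j-1}+D_j$ to obtain $(I+M\mathcal{K}_{j-1})(\Lambda_j-\Lambda_{j-1})+MD_j\Lambda_j=0$. Left-multiplying by $(I+M\mathcal{K}_{j-1})^{-1}$ and using $(I+M\mathcal{K}_{j-1})^{-1}M=\Lambda_{j-1}$ then yields $\Lambda_j-\Lambda_{j-1}=-\Lambda_{j-1}D_j\Lambda_j$, which is exactly the claim. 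The one point that deserves a line of care throughout is the invertibility of $I+M\mathcal{K}_{j-1}$ and $I+M\mathcal{K}_j$, addressed above; everything else is routine matrix algebra, so I do not anticipate a genuine obstacle.
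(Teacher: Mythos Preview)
Your argument is correct, and for the main identity it coincides with the paper's proof: both rest on the rank-one update $\mathcal{K}_j=\mathcal{K}_{j-1}+h_j^{-1}\mathbf{P}_j(1)^{t}\mathbf{P}_j(1)$ and the same cancellation $(I+M\mathcal{K}_{j-1})^{-1}\bigl[(I+M\mathcal{K}_j)-(I+M\mathcal{K}_{j-1})\bigr](I+M\mathcal{K}_j)^{-1}M=\Lambda_{j-1}-\Lambda_j$, only packaged slightly differently.

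Where you diverge is the symmetry of $\Lambda_j$. The paper invokes the Sherman--Morrison--Woodbury identity to write $(I+M\mathcal{K}_j)^{-1}M=M-M(\mathcal{K}_j^{-1}+M)^{-1}M$, from which symmetry is immediate; this tacitly uses that $\mathcal{K}_j$ is invertible. Your route, based on $(I+M\mathcal{K}_j)M=M(I+\mathcal{K}_jM)$, is more elementary and does not require $\mathcal{K}_j^{-1}$ to exist---a genuine gain, since $\mathcal{K}_0$ is singular (all derivative entries of $K_0$ vanish). So your symmetry argument is both simpler and slightly more robust than the paper's, while the rest is the same.
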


\bigskip

\begin{proof}
Using Sherman--Morrison--Woodbury identity (see \cite{GolubVanLoan2013}), we get
$$
(I+M \mathcal{K}_{j})^{-1}=I-M(\mathcal{K}_{j}^{-1}+M)^{-1},
$$
thus
$$
(I+M \mathcal{K}_{j})^{-1}M = M-M(\mathcal{K}_{j}^{-1}+M)^{-1}M,
$$
and the symmetry of $\Lambda_j$ follows from the symmetry of $M$ and $\mathcal{K}_{j}^{-1}$.
On the other hand,
\begin{eqnarray*}
\lefteqn{\Lambda_{j-1} \mathbf{P}_j(1)^t h_j^{-1} \mathbf{P}_j(1) \Lambda_j =} \\
&=& \left(I+M \mathcal{K}_{j-1}\right)^{-1} (M\mathcal{K}_{j}-M\mathcal{K}_{j-1})  \left(I+M
\mathcal{K}_{j}\right)^{-1}\,M \\
&=& \left(I+M \mathcal{K}_{j-1}\right)^{-1} ((I+M\,\mathcal{K}_{j})-(I+M\mathcal{K}_{j-1}))
\left(I+M \mathcal{K}_{j}\right)^{-1}M \\
&=& \left(I+M \mathcal{K}_{j-1}\right)^{-1}\,M-\left(I+M \mathcal{K}_{j}\right)^{-1}\,M.
\end{eqnarray*}

\end{proof}

\bigskip

\begin{lemma}\label{hinvers} For $j\geqslant 1$,
$$
(\tilde{h}_j^{(\alpha,\beta)})^{-1}=(h_j^{(\alpha,\beta)})^{-1}- (h_j^{(\alpha,\beta)})^{-2}
\, \mathbf{P}^{(\alpha,\beta)}_j(1) \Lambda_{j}\mathbf{P}^{(\alpha,\beta)}_j(1)^t.
$$
\end{lemma}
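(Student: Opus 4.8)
The plan is to compute $\tilde h_j^{(\alpha,\beta)}=(q_j^{(\alpha,\beta)},q_j^{(\alpha,\beta)})^S$ directly from the representation (\ref{MM}) of $q_j^{(\alpha,\beta)}$ in terms of $P_j^{(\alpha,\beta)}$ and the Jacobi kernel, and then invert. First I would use the fact that $q_j^{(\alpha,\beta)}$ is orthogonal (with respect to $(\cdot,\cdot)^S$) to all polynomials of degree $\leqslant j-1$, so that
$$
\tilde h_j^{(\alpha,\beta)}=(q_j^{(\alpha,\beta)},q_j^{(\alpha,\beta)})^S=(q_j^{(\alpha,\beta)},P_j^{(\alpha,\beta)})^S,
$$
since $q_j^{(\alpha,\beta)}-P_j^{(\alpha,\beta)}$ has degree $\leqslant j-1$ (both share the leading coefficient $k_j^{(\alpha,\beta)}$ by normalization). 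Now expand the right-hand side using $(f,g)^S=(f,g)_{[\alpha,\beta]}+\mathbf f(1)M\,\mathbf g(1)^t$ and the expression (\ref{MM}) for $q_j^{(\alpha,\beta)}$: the $L^2$ part gives $(P_j^{(\alpha,\beta)},P_j^{(\alpha,\beta)})_{[\alpha,\beta]}=h_j^{(\alpha,\beta)}$ plus a cross term $-\mathbf P_j(1)\Lambda_{j-1}(\mathbf K_{j-1}(\cdot,1),P_j^{(\alpha,\beta)})_{[\alpha,\beta]}$, which vanishes because each component of $\mathbf K_{j-1}(t,1)$ is a polynomial in $t$ of degree $\leqslant j-1$ and hence $(\alpha,\beta)$-orthogonal to $P_j^{(\alpha,\beta)}$.

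The second step handles the point-evaluation part $\mathbf q_j(1)M\,\mathbf P_j(1)^t$. From (\ref{MM}), evaluating the polynomial and its derivative at $t=1$ and using the matrix kernel notation $\mathcal K_{j-1}$ of (\ref{kappa}) (so that $\mathbf K_{j-1}(1,1)=\mathcal K_{j-1}$ read off componentwise), one gets
$$
\mathbf q_j(1)=\mathbf P_j(1)-\mathbf P_j(1)\,\Lambda_{j-1}\,\mathcal K_{j-1}=\mathbf P_j(1)\bigl(I-\Lambda_{j-1}\mathcal K_{j-1}\bigr).
$$
Since $\Lambda_{j-1}=(I+M\mathcal K_{j-1})^{-1}M$, a short manipulation gives $I-\Lambda_{j-1}\mathcal K_{j-1}=(I+M\mathcal K_{j-1})^{-1}$, and therefore
$$
\mathbf q_j(1)\,M\,\mathbf P_j(1)^t=\mathbf P_j(1)(I+M\mathcal K_{j-1})^{-1}M\,\mathbf P_j(1)^t=\mathbf P_j(1)\,\Lambda_{j-1}\,\mathbf P_j(1)^t.
$$
Combining the two steps yields
$$
\tilde h_j^{(\alpha,\beta)}=h_j^{(\alpha,\beta)}+\mathbf P_j(1)\,\Lambda_{j-1}\,\mathbf P_j(1)^t.
$$

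The final step is to invert this relation and rewrite it in terms of $\Lambda_j$ rather than $\Lambda_{j-1}$, which is where Lemma~\ref{lambda} does the work. Writing $a=\mathbf P_j(1)\Lambda_{j-1}\mathbf P_j(1)^t$ (a scalar), we have $\tilde h_j=h_j+a$, so $\tilde h_j^{-1}=h_j^{-1}-h_j^{-1}a\,\tilde h_j^{-1}$. To match the claimed formula I must show $h_j^{-1}a\,\tilde h_j^{-1}=h_j^{-2}\mathbf P_j(1)\Lambda_j\mathbf P_j(1)^t$, i.e. $a\,\tilde h_j^{-1}=h_j^{-1}\mathbf P_j(1)\Lambda_j\mathbf P_j(1)^t$; equivalently, $\mathbf P_j(1)\Lambda_{j-1}\mathbf P_j(1)^t=h_j^{-1}\tilde h_j\,\mathbf P_j(1)\Lambda_j\mathbf P_j(1)^t$, which in turn rearranges (using $\tilde h_j=h_j+a$) to the scalar identity $\mathbf P_j(1)(\Lambda_{j-1}-\Lambda_j)\mathbf P_j(1)^t=h_j^{-1}\bigl(\mathbf P_j(1)\Lambda_{j-1}\mathbf P_j(1)^t\bigr)\bigl(\mathbf P_j(1)\Lambda_j\mathbf P_j(1)^t\bigr)$. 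But this is exactly Lemma~\ref{lambda} sandwiched between $\mathbf P_j(1)$ on the left and $\mathbf P_j(1)^t$ on the right, since $\mathbf P_j(1)\bigl(\Lambda_{j-1}\mathbf P_j(1)^t h_j^{-1}\mathbf P_j(1)\Lambda_j\bigr)\mathbf P_j(1)^t$ equals the product of the two scalars. The main obstacle is bookkeeping: one must be careful that $\mathbf P_j(1)$ is a row vector, $\mathbf P_j(1)^t$ a column, and that the various scalar quantities ($a$, $\tilde h_j$, $h_j$) commute freely — once that is set up, Lemma~\ref{lambda} closes the argument immediately. Dividing through by $h_j$ then gives the stated identity.
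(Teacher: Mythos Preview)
Your proof is correct and follows essentially the same route as the paper: both first establish $\tilde h_j=h_j+\mathbf P_j(1)\Lambda_{j-1}\mathbf P_j(1)^t$ via $(q_j,q_j)^S=(q_j,P_j)^S$ and $\mathbf q_j(1)=\mathbf P_j(1)(I+M\mathcal K_{j-1})^{-1}$, and then invert. The only real difference is in the inversion step: you sandwich Lemma~\ref{lambda} between $\mathbf P_j(1)$ and $\mathbf P_j(1)^t$ to get the scalar identity $a-b=h_j^{-1}ab$ directly, whereas the paper redoes that matrix computation from scratch (starting from $M\mathbf P_j(1)^t h_j^{-2}\mathbf P_j(1)=h_j^{-1}[(I+M\mathcal K_j)-(I+M\mathcal K_{j-1})]$) and then verifies $\tilde h_j\bigl(h_j^{-1}-h_j^{-2}\mathbf P_j(1)\Lambda_j\mathbf P_j(1)^t\bigr)=1$; your use of the already-proved Lemma~\ref{lambda} is a bit more economical.
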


\bigskip

\begin{proof}

First, we get the relation between the norms, using (\ref{MM})
$$
\tilde{h}_j^{(\alpha,\beta)}=(q_j,q_j)^S=(q_j,p_j)^S=h_j^{(\alpha,\beta)}+\mathbf{q}^{(\alpha,\beta)}_j(1)
M \mathbf{P}^{(\alpha,\beta)}_j(1)^t.
$$
Taking into account that $\mathbf{q}^{(\alpha,\beta)}_j(1)=\mathbf{P}^{(\alpha,\beta)}_j(1) \left(I+M
\mathcal{K}_{j-1}\right)^{-1}$, we get
$$
\tilde{h}_j^{(\alpha,\beta)}=h_j^{(\alpha,\beta)}+\mathbf{P}^{(\alpha,\beta)}_j(1) \,
\Lambda_{j-1} \,\mathbf{P}^{(\alpha,\beta)}_j(1)^t.
$$
On the other hand, from
$$
M\mathbf{P}^{(\alpha,\beta)}_j(1)^t h_j^{-2} \mathbf{P}^{(\alpha,\beta)}_j(1)=
 h_j^{-1}M(\mathcal{K}_{j}-\mathcal{K}_{j-1})=h_j^{-1} [(I+M\mathcal{K}_{j})-
(I+M \mathcal{K}_{j-1})],
$$
we get
\begin{eqnarray*}
\lefteqn{\hspace{-1.0cm}\mathbf{P}^{(\alpha,\beta)}_j(1) \left(I+M\, \mathcal{K}_{j-1}\right)^{-1}\,M\, \mathbf{P}^{(\alpha,
\beta)}_j(1)^t\,
h_j^{-2} \,\mathbf{P}^{(\alpha,\beta)}_j(1) \left(I+M \mathcal{K}_{j}\right)^{-1}\,M
\mathbf{P}^{(\alpha,\beta)}_j(1)^t}\\
&=& h_j^{-1} \,\mathbf{P}^{(\alpha,\beta)}_j(1) \left(I+M \mathcal{K}_{j-1}\right)^{-1}\,
((I+M\,\mathcal{K}_{j})-(I+M\, \mathcal{K}_{j-1})) \\
&~& \times \left(I+M \,\mathcal{K}_{j}\right)^{-1}\,M\,
\mathbf{P}^{(\alpha,\beta)}_j(1)^t \\
&=& h_j^{-1}\,\mathbf{P}^{(\alpha,\beta)}_j(1) \left(I+M \,\mathcal{K}_{j-1}\right)^{-1}\,M
\,\mathbf{P}^{(\alpha,\beta)}_j(1)^t \\
&~& - h_j^{-1}\,\mathbf{P}^{(\alpha,\beta)}_j(1)
\left(I+M\, \mathcal{K}_{j}\right)^{-1}\,M\, \mathbf{P}^{(\alpha,\beta)}_j(1)^t.
\end{eqnarray*}
Then, it is easy to show that
$$
\tilde{h}_j^{(\alpha,\beta)} \left(h_j^{-1}-(h_j^{-1})^2\, \mathbf{P}^{(\alpha,\beta)}_j(1)
\left(I+M \,\mathcal{K}_{j}\right)^{-1}
\,M \,\mathbf{P}^{(\alpha,\beta)}_j(1)^t\right) = 1,
$$
and the result follows.

\end{proof}

\bigskip

Now we are ready to derive an explicit formula for the univariate kernels.

\begin{prop}
For $j\geqslant 0$, we get
\begin{eqnarray}
\lefteqn{\hspace{-1cm}q_j^{(\alpha,\beta)}(t)\,q_j^{(\alpha,\beta)}(u)\,(\tilde{h}_j^{(\alpha,\beta)})^{-1} =
P_j^{(\alpha,\beta)}(t)\,P_j^{(\alpha,\beta)}(u)\,(h_j^{(\alpha,\beta)})^{-1}} \nonumber\\
\hspace{1cm}&-& \mathbf{K}_j(t,1)^t\, \Lambda_j \, \mathbf{K}_j(u,1)+\mathbf{K}_{j-1}(t,1)^t\, \Lambda_{j-1} \,
\mathbf{K}_{j-1}(u,1).\label{KC1v}
\end{eqnarray}
As a consequence, for $n\geqslant 0$,
\begin{equation}\label{K1v}
\tilde{K}_n(t,u) = K_n(t,u) - \mathbf{K}_n(t,1)^t\,\Lambda_n \,\mathbf{K}_n(u,1).
\end{equation}

\end{prop}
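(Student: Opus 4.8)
The plan is to prove the pointwise identity (\ref{KC1v}) first and then obtain (\ref{K1v}) by summing a telescoping series. For the first identity, I would start from the connection formula (\ref{MM}), namely
$$
q_j^{(\alpha,\beta)}(t)=P_j^{(\alpha,\beta)}(t)- \mathbf{P}_j^{(\alpha,\beta)}(1)\,\Lambda_{j-1}\,\mathbf{K}_{j-1}^{\alpha,\beta}(t,1),
$$
and form the product $q_j^{(\alpha,\beta)}(t)\,q_j^{(\alpha,\beta)}(u)$. This expands into four terms: the pure Jacobi product $P_j(t)P_j(u)$, two cross terms, and a quadratic term in the kernel vectors involving $\Lambda_{j-1}\mathbf{P}_j(1)^t\mathbf{P}_j(1)\Lambda_{j-1}$. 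I would then multiply through by $(\tilde h_j)^{-1}$ and substitute the expression for $(\tilde h_j)^{-1}$ from Lemma~\ref{hinvers}. The goal is to recognise that, after using the symmetry of $\Lambda_j$ from Lemma~\ref{lambda} and the key algebraic identity $\Lambda_{j-1}\mathbf{P}_j(1)^t h_j^{-1}\mathbf{P}_j(1)\Lambda_j=\Lambda_{j-1}-\Lambda_j$, all the messy terms collapse into exactly the combination $-\mathbf{K}_j(t,1)^t\Lambda_j\mathbf{K}_j(u,1)+\mathbf{K}_{j-1}(t,1)^t\Lambda_{j-1}\mathbf{K}_{j-1}(u,1)$.

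The bookkeeping here is the crux, so let me be more explicit about the mechanism. Writing $h_j^{-1}P_j(t)P_j(u)$ as the "new term at level $j$" of the Jacobi kernel, I expect the cross terms from the expansion of $q_j(t)q_j(u)$ — each of the form $h_j^{-1}P_j(t)\,\mathbf{P}_j(1)\Lambda_{j-1}\mathbf{K}_{j-1}(u,1)$ — together with the $(h_j^{-1})^2$-correction from Lemma~\ref{hinvers} to reorganise, using $\mathbf{K}_j(t,1)=\mathbf{K}_{j-1}(t,1)+h_j^{-1}\mathbf{P}_j(t)\mathbf{P}_j(1)^t$ (the defining recursion for the kernel vector, which follows from (\ref{KernelJac}) and (\ref{KernelJac}) differentiated) — wait, more precisely from the fact that $K_j(t,1)=K_{j-1}(t,1)+h_j^{-1}P_j(t)P_j(1)$ and its $u$-derivative. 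The strategy is: express everything in terms of $\mathbf{K}_{j-1}$, $\mathbf{P}_j$, $\Lambda_{j-1}$, $\Lambda_j$ and $h_j^{-1}$; replace $\Lambda_j=\Lambda_{j-1}-\Lambda_{j-1}\mathbf{P}_j(1)^t h_j^{-1}\mathbf{P}_j(1)\Lambda_j$ (equivalently use the Lemma~\ref{lambda} identity) wherever a lone $\Lambda_{j-1}$ appears that needs to become $\Lambda_j$; and check that the quadratic-in-$\mathbf{K}$ terms match. This is a finite, mechanical verification once one commits to a fixed reduction order.

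For the consequence (\ref{K1v}), I would simply sum (\ref{KC1v}) over $j=0,1,\dots,n$. The left side sums to $\tilde K_n(t,u)$ by definition of the reproducing kernel, and the first term on the right sums to $K_n(t,u)$. The remaining part, $\sum_{j=0}^n\big(-\mathbf{K}_j(t,1)^t\Lambda_j\mathbf{K}_j(u,1)+\mathbf{K}_{j-1}(t,1)^t\Lambda_{j-1}\mathbf{K}_{j-1}(u,1)\big)$, telescopes to $-\mathbf{K}_n(t,1)^t\Lambda_n\mathbf{K}_n(u,1)$, since the $j=0$ boundary term $\mathbf{K}_{-1}(t,1)^t\Lambda_{-1}\mathbf{K}_{-1}(u,1)$ vanishes (with the convention $\mathbf{K}_{-1}\equiv 0$, or $\Lambda_{-1}=M$ and $\mathbf{K}_{-1}=0$). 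This yields (\ref{K1v}) immediately.

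The main obstacle is the algebraic collapse in the first step: matching the four expanded terms plus the norm correction against the two-term telescoping expression requires using the symmetry of $\Lambda_j$ and the identity of Lemma~\ref{lambda} in precisely the right places, and it is easy to mismanage transposes since $\mathbf{P}_j(1)$ is a row vector while $\mathbf{K}_j(t,1)$ is a column vector. I would therefore carry out the computation by fixing the convention that all "scalars" are $1\times1$ matrices, rewrite every cross term so that its middle factor is either $\Lambda_{j-1}\mathbf{P}_j(1)^t h_j^{-1}\mathbf{P}_j(1)\Lambda_j$ or $\Lambda_{j-1}-\Lambda_j$, and only at the very end re-expand $\mathbf{K}_j$ in terms of $\mathbf{K}_{j-1}$ to confirm the cancellation. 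Once the reduction order is fixed this is routine; it is only the risk of sign or transpose errors that makes it delicate.
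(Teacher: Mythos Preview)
Your proposal is correct and follows essentially the same approach as the paper: expand $q_j(t)q_j(u)(\tilde h_j)^{-1}$ using (\ref{MM}) and Lemma~\ref{hinvers}, then invoke the symmetry and the identity of Lemma~\ref{lambda} together with the recursion $\mathbf{K}_j(\cdot,1)=\mathbf{K}_{j-1}(\cdot,1)+h_j^{-1}P_j(\cdot)\mathbf{P}_j(1)^t$ to collapse the expansion into (\ref{KC1v}), and finally telescope to obtain (\ref{K1v}). The paper carries out exactly this computation, writing out the intermediate five-term expansion before simplifying; your description of the bookkeeping hazards (row/column conventions, where to substitute $\Lambda_{j-1}-\Lambda_j$) matches the places where care is needed.
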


\bigskip

\begin{proof}
Using (\ref{MM}) and Lemma \ref{hinvers}, we get
\begin{eqnarray*}
q_j(t)\,q_j(u)\,\tilde{h}_j^{-1}
&=& (P_j(t)- \mathbf{P}_j(1) \,\Lambda_{j-1} \,\mathbf{K}_{j-1}(t,1)) \\
& ~ & \times (P_j(u)- \mathbf{P}_j(1)\, \Lambda_{j-1} \,\mathbf{K}_{j-1}(u,1))
\\
& ~ & \times (h_j^{-1} - (h_j^{-1})^2 \, \mathbf{P}_j(1) \,\Lambda_j \, \mathbf{P}_j(1)^t).
\end{eqnarray*}
Taking into account Lemma~\ref{lambda} and $\mathbf{P}_j(1)h_j^{-1}P_j(u)=\mathbf{K}_j(u,1)^t-
\mathbf{K}_{j-1}(u,1)^t$, we obtain
\begin{eqnarray*}
q_j(t)\,q_j(u)\,\tilde{h}_j^{-1}
&=& P_j(t)\,P_j(u)\, h_j^{-1} -(\mathbf{K}_j(t,1)^t-\mathbf{K}_{j-1}(t,1)^t)\Lambda_{j-1} \mathbf{K}_{j-1}(u,1) \\
&- & (\mathbf{K}_j(t,1)^t-\mathbf{K}_{j-1}(t,1)^t)\Lambda_j (\mathbf{K}_j(u,1)-\mathbf{K}_{j-1}(u,1)) \\
&+&  (\mathbf{K}_j(t,1)^t-\mathbf{K}_{j-1}(t,1)^t)(\Lambda_{j-1}-\Lambda_{j})\mathbf{K}_{j-1}(u,1) \\
&-& (\mathbf{K}_j(u,1)^t-\mathbf{K}_{j-1}(u,1)^t)\Lambda_{j-1}\mathbf{K}_{j-1}(t,1) \\
&+& \mathbf{K}_{j-1}(t,1)^t \Lambda_{j-1} \mathbf{P}_j(1)^t \mathbf{P}_j(1)\Lambda_{j}\mathbf{K}_{j}(u,1).
\end{eqnarray*}
Therefore, we get (\ref{KC1v}) and a telescopic sum gives (\ref{K1v}).

\end{proof}

\bigskip

\section{Multivariate Sobolev orthogonal polynomials}
\setcounter{equation}{0}

In this section, we will express multivariate Sobolev orthogonal polynomials in terms of
classical ball polynomials. To this end, using the following lemmas we will simplify the matrix
 $\Lambda _{m}^{\left( \mu ,k+\delta ;M_{k}\right)}=
( I+M_{k}\mathcal{K}_{m})^{-1}M_{k}$ defined in (\ref{defLambda}), where
$M_{k}$ was introduced in (\ref{defM}),
and $\mathcal{K}_{m}=\mathcal{K}^{(\mu,k+\delta)}_{m}$ was given in (\ref{kappa}).

\bigskip

\begin{lemma}\label{inv}
Let $M$ and $\mathcal{K}$ be $2\times2$ matrices with $\det \left( M\right) =0$.
Then
\[
\left( I+M\mathcal{K}\right) ^{-1}M=\frac{1}{\Delta }M,
\]%
where
$$
\Delta =1 + \mathrm{trace}\left( M\mathcal{K}\right)
$$
is assumed non--zero.
\end{lemma}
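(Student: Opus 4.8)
The plan is to exploit the rank-one structure of $M$. Since $\det(M)=0$ and $M$ is $2\times 2$, either $M=0$ (in which case both sides are trivially $0$ and $\Delta=1$) or $M$ has rank one, so we may write $M = \mathbf{v}^t\mathbf{w}$ for suitable row vectors $\mathbf{v},\mathbf{w}\in\mathbb{R}^2$; equivalently every column of $M$ is a scalar multiple of a fixed column vector. The key observation is then that $M\mathcal{K}M$ is again a scalar multiple of $M$: indeed $M\mathcal{K}M = \mathbf{v}^t(\mathbf{w}\mathcal{K}\mathbf{v}^t)\mathbf{w} = (\mathbf{w}\mathcal{K}\mathbf{v}^t)\,\mathbf{v}^t\mathbf{w} = c\,M$, where the scalar is $c = \mathbf{w}\mathcal{K}\mathbf{v}^t = \mathrm{trace}(\mathbf{v}^t\mathbf{w}\mathcal{K}) = \mathrm{trace}(M\mathcal{K})$, using the cyclic invariance of the trace. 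More generally $M(\mathcal{K}M)^k = c^k M$ for all $k\geqslant 0$.

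From here I would verify the claimed identity directly by checking that $(I+M\mathcal{K})\cdot\frac{1}{\Delta}M = M$, which avoids any convergence or invertibility subtleties. Compute
$$
(I+M\mathcal{K})\,\frac{1}{\Delta}M = \frac{1}{\Delta}\bigl(M + M\mathcal{K}M\bigr) = \frac{1}{\Delta}\bigl(M + c\,M\bigr) = \frac{1+c}{\Delta}\,M = M,
$$
since $\Delta = 1 + \mathrm{trace}(M\mathcal{K}) = 1+c$ by the computation above and $\Delta\neq 0$ by hypothesis. This shows $\frac{1}{\Delta}M$ is a right inverse image of $M$ under multiplication by $I+M\mathcal{K}$; because $I+M\mathcal{K}$ is invertible (its determinant being $\Delta\neq 0$, as one sees from $\det(I+M\mathcal{K}) = 1 + \mathrm{trace}(M\mathcal{K}) + \det(M)\det(\mathcal{K}) = 1 + \mathrm{trace}(M\mathcal{K})$ for $2\times 2$ matrices when $\det M = 0$), multiplying on the left by $(I+M\mathcal{K})^{-1}$ yields $\frac{1}{\Delta}M = (I+M\mathcal{K})^{-1}M$, as required.

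The only mild obstacle is making the rank-one factorization $M=\mathbf{v}^t\mathbf{w}$ clean in the degenerate case $M=0$; I would simply dispatch that case separately at the start, noting both sides equal the zero matrix and $\Delta=1$. Everything else is a short trace computation, so no serious difficulty is expected. I would also remark that the identity $\det(I+AB)=1+\mathrm{trace}(AB)$ for $2\times 2$ matrices with $\det(AB)=0$ is what reconciles the hypothesis ``$\Delta$ non-zero'' with the invertibility of $I+M\mathcal{K}$ that is implicitly needed for the left-hand side to make sense.
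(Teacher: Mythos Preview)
Your proof is correct and is precisely the kind of ``straightforward calculation'' the paper alludes to without giving details: the paper's own proof consists of the single sentence ``This is a straightforward calculation.'' Your rank-one factorization together with the verification $(I+M\mathcal{K})\frac{1}{\Delta}M = M$ and the $2\times 2$ determinant identity $\det(I+M\mathcal{K})=1+\mathrm{trace}(M\mathcal{K})$ is exactly how one would fill in that calculation.
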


\bigskip

\begin{proof}
This is a straightforward calculation.

\end{proof}

\bigskip

\begin{lemma}\label{le2}
Let $k\geqslant 0$, then
$$
\Lambda _{m}^{\left( \mu ,k+\delta;M_{k}\right) } = \frac{1}{\Delta _{k,m}}M_{k},
$$
where
\begin{equation}\label{delta}
\Delta _{k,m}=1+2^{k}A_{0}\left\{ k^{2}K_{m}\left( 1,1\right)
+8kK_{m}^{\left( 1,0\right) }\left( 1,1\right) +16K_{m}^{\left( 1,1\right)
}\left( 1,1\right) \right\} .
\end{equation}
\end{lemma}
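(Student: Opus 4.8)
The plan is to combine Lemma~\ref{inv} with the explicit shape of the matrix $M_k$ from (\ref{defM}). First I would observe that
$$
M_k = 2^{k}\,A_0\,\begin{bmatrix} k^2 & 4k \\ 4k & 16\end{bmatrix},
$$
and this matrix is a rank-one (hence singular) matrix: indeed $\det M_k = 2^{2k}A_0^2(16k^2-16k^2)=0$. Therefore the hypotheses of Lemma~\ref{inv} are met with $M=M_k$ and $\mathcal{K}=\mathcal{K}_m = \mathcal{K}^{(\mu,k+\delta)}_m$, provided the scalar $\Delta=1+\mathrm{trace}(M_k\mathcal{K}_m)$ is non-zero; I will note that this positivity is guaranteed because $M_k$ is positive semidefinite (as $A_0>0$ and $\lambda>0$) and $\mathcal{K}_m$ is a Gram matrix, so the trace term is non-negative and $\Delta\geqslant 1>0$. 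Applying Lemma~\ref{inv} then gives immediately
$$
\Lambda_m^{(\mu,k+\delta;M_k)} = \bigl(I+M_k\mathcal{K}_m\bigr)^{-1}M_k = \frac{1}{\Delta_{k,m}}\,M_k,
\qquad \Delta_{k,m} = 1 + \mathrm{trace}(M_k\mathcal{K}_m).
$$

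The only remaining task is to expand $\mathrm{trace}(M_k\mathcal{K}_m)$ into the explicit form stated in (\ref{delta}). Writing $\mathcal{K}_m = \begin{bmatrix} K_m(1,1) & K_m^{(1,0)}(1,1) \\ K_m^{(0,1)}(1,1) & K_m^{(1,1)}(1,1)\end{bmatrix}$ (recall from (\ref{kappa}) that the parameters are $(\alpha,\beta)=(\mu,k+\delta)$), I compute the product $M_k\mathcal{K}_m$ and read off its diagonal entries. The $(1,1)$ entry of $M_k\mathcal{K}_m$ is $2^{k}A_0\bigl(k^2 K_m(1,1) + 4k\,K_m^{(0,1)}(1,1)\bigr)$ and the $(2,2)$ entry is $2^{k}A_0\bigl(4k\,K_m^{(1,0)}(1,1) + 16\,K_m^{(1,1)}(1,1)\bigr)$. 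Adding these and using the symmetry of the kernel (so that $K_m^{(0,1)}(1,1)=K_m^{(1,0)}(1,1)$), the two cross terms combine to $8k\,K_m^{(1,0)}(1,1)$, yielding
$$
\mathrm{trace}(M_k\mathcal{K}_m) = 2^{k}A_0\Bigl\{ k^2 K_m(1,1) + 8k\,K_m^{(1,0)}(1,1) + 16\,K_m^{(1,1)}(1,1)\Bigr\},
$$
which is exactly the bracketed expression in (\ref{delta}). This completes the proof.

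There is essentially no hard step here: the lemma is a direct corollary of Lemma~\ref{inv} once one notices $\det M_k = 0$, and the rest is a one-line matrix multiplication together with the kernel symmetry $K_m^{(0,1)}(1,1)=K_m^{(1,0)}(1,1)$. The only point deserving a word of care is verifying $\Delta_{k,m}\neq 0$, which I would dispatch via the positive-semidefiniteness of $M_k$ noted above; for $k=0$ one even has $M_0=0$ and $\Delta_{0,m}=1$, consistent with $\Lambda_m^{(\mu,\delta;M_0)}=0$.
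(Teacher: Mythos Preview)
Your proof is correct and follows essentially the same route as the paper: observe $\det M_k=0$, invoke Lemma~\ref{inv}, and expand $\mathrm{trace}(M_k\mathcal{K}_m)$ using the symmetry $K_m^{(0,1)}(1,1)=K_m^{(1,0)}(1,1)$. Your added justification that $\Delta_{k,m}\geqslant 1$ via positive semidefiniteness is a nice touch the paper omits.

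One small slip in your closing remark: for $k=0$ the matrix $M_0$ is \emph{not} zero; from (\ref{defM}) one has $M_0 = A_0\begin{bmatrix}0&0\\0&16\end{bmatrix}$, so $\Delta_{0,m}=1+16A_0K_m^{(1,1)}(1,1)$ and $\Lambda_m^{(\mu,\delta;M_0)}\neq 0$ in general. This does not affect your main argument, since $\det M_0=0$ still holds and Lemma~\ref{inv} applies.
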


\bigskip

\begin{proof}
We see that $\det \left( M_{k}\right) =0$, and
\begin{eqnarray*}
\lefteqn{1+\mathrm{trace}\left( M_{k}\mathcal{K}_{m}\right)} \\
&&\hspace{-0.8cm}=1+2^{k}A_{0}\left\{ k^{2}K_{m}\left( 1,1\right) +4kK_{m}^{\left(
1,0\right) }\left( 1,1\right) +4kK_{m}^{\left( 0,1\right) }\left( 1,1\right)
+16K_{m}^{\left( 1,1\right) }\left( 1,1\right) \right\}.
\end{eqnarray*}
Then Lemma \ref{inv} gives the result.

\end{proof}

\bigskip

If we replace $t=2\,\|x\|^2 -1$ in equation (\ref{relq13}), multiply the result
times $Y_\nu^{n-2j}(x)$, and use (\ref{baseP}) and (\ref{eq:sovpol}),
we can express Sobolev orthogonal polynomials in terms of ball polynomials. This
representation is given in next theorem.

\bigskip

\begin{theorem}
Let $n\in \mathbb{N}_0$, $0\leqslant   j\leqslant   n/2$, and $1\leqslant   \nu\leqslant   a^d_{n-2j}$. Then,
$$
Q^n_{j,\nu}(x)= b_{j,j}^{(\mu, \beta^n_j)}\, P^n_{j,\nu}(x;\mu+2)+b_{j,j-1}^{(\mu, \beta^n_j)}\,
P^{n-2}_{j-1,\nu}(x;\mu+2)\\
+b_{j,j-2}^{(\mu,\beta^n_j)}\, P^{n-4}_{j-2,\nu}(x;\mu+2),
$$
where $P^n_{j,\nu}(x;\mu+2)$ are the polynomials in the ball orthogonal with respect to $W_{\mu+2}(x)$, and
\begin{eqnarray*}
b_{j,j}^{(\mu, \beta^n_j)}&=&\frac{(n-j+\mu+\delta+2)(n-j+\mu+\delta+1)}{(n+\mu+\delta+2)
(n+\mu+\delta+1)}, \\
b_{j,j-1}^{(\mu, \beta^n_j)}&=& \frac{n-j+\mu+\delta+1}{n+\mu+\delta} \Big( -\frac{2(n-j+\delta)
}{n+\mu+\delta+2}-d^n_{j} \, a_{j-1}^{n-2}\Big),\\
b_{j,j-2}^{(\mu, \beta^n_j)} &=&\frac{n-j+\delta-1}{n+\mu+\delta}\Big(\frac{n-j+\delta}{n+\mu+\delta+1}+
d^n_{j} \, a_{j}^n\Big),
\end{eqnarray*}
with
\begin{eqnarray*}
d^n_{j} &=& \frac{A_0\Gamma(n-j+\mu+\delta+1)\,\Gamma(j+\mu+1)}{2^{\mu+\delta+2}\Gamma(\mu+1)\Gamma(\mu+2)
\Gamma(n-j+\delta)\, j!}\\
&~& \times\,((\mu+1)(n-2j)+2j(n-j+\mu+\delta+1)), \\
a_j^n & = & 2\,((n-2j)+2j (n-j+\mu+\delta+1)).
\end{eqnarray*}
\end{theorem}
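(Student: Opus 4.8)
The plan is to read the expansion off directly from the univariate identity (\ref{relq13}) of Lemma~\ref{S1v}. Put $\alpha=\mu$ and $\beta=\beta^n_j=n-2j+\delta$, substitute $t=2\|x\|^2-1$, and multiply both sides by the degree-$(n-2j)$ spherical harmonic $Y^{n-2j}_\nu(x)$. By (\ref{eq:sovpol}) the left-hand side becomes $Q^n_{j,\nu}(x)$, while on the right each summand $P^{(\mu+2,\beta^n_j)}_i(2\|x\|^2-1)\,Y^{n-2j}_\nu(x)$ is, by the very definition (\ref{baseP}) of the classical ball polynomials, exactly one of $P^n_{j,\nu}(x;\mu+2)$, $P^{n-2}_{j-1,\nu}(x;\mu+2)$, $P^{n-4}_{j-2,\nu}(x;\mu+2)$. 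The only thing to verify is the index matching, and this is immediate: the spherical-harmonic degree is $n-2j$ in all three terms, and $\beta^n_j=\beta^{n-2}_{j-1}=\beta^{n-4}_{j-2}$ since each equals $(n-2j)+\delta$. Thus the expansion holds with the $b$-coefficients of Lemma~\ref{S1v}, and it remains only to bring those into the stated closed forms.

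The coefficient $b^{(\mu,\beta^n_j)}_{j,j}$ requires only the substitutions $j+\alpha+\beta=n-j+\mu+\delta$, $2j+\alpha+\beta=n+\mu+\delta$ in the formula of Lemma~\ref{S1v}. The work is concentrated in $b^{(\mu,\beta^n_j)}_{j,j-1}$ and $b^{(\mu,\beta^n_j)}_{j,j-2}$, which carry the quantities $c^j_{j-1}$ and $c^j_{j-2}$ evaluated at $M=M_{n-2j}$. Here the structural point is that $\det M_{n-2j}=0$: writing $M_{n-2j}=2^{n-2j}A_0\,v\,v^{t}$ with $v=\bigl((n-2j),4\bigr)^{t}$, Lemma~\ref{inv} (equivalently Lemma~\ref{le2}) collapses $\Lambda^{(\mu,\beta^n_j;M_{n-2j})}_{j-1}$ to $\Delta_{n-2j,j-1}^{-1}M_{n-2j}$, with $\Delta_{n-2j,j-1}$ given explicitly by (\ref{delta}). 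Applying $M_{n-2j}$ to the two column vectors occurring in $c^j_{j-1}$ and $c^j_{j-2}$ in Lemma~\ref{S1v}, the rank-one structure makes the scalars $a^{n-2}_{j-1}$ and $a^n_j$ factor out, leaving the common direction $v$.

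What remains is purely computational. One evaluates $\mathbf P^{(\mu,\beta^n_j)}_j(1)=\bigl(P^{(\mu,\beta^n_j)}_j(1),(P^{(\mu,\beta^n_j)}_j)'(1)\bigr)$ by (\ref{jac-norm}) and (\ref{derJ}), so that the pairing $(n-2j)P^{(\mu,\beta^n_j)}_j(1)+4(P^{(\mu,\beta^n_j)}_j)'(1)$ reproduces the polynomial factor $(\mu+1)(n-2j)+2j(n-j+\mu+\delta+1)$ of $d^n_j$; one inserts into $\Delta_{n-2j,j-1}$ the explicit values $K_{j-1}(1,1)$, $K^{(1,0)}_{j-1}(1,1)$, $K^{(1,1)}_{j-1}(1,1)$ of Lemma~\ref{Kn} at parameters $(\mu,\beta^n_j)$; and one collects the powers of $2$ (the factor $2^{-\mu-\beta^n_j-2}$ in the definition of $c^j_{j-1}$ absorbs the $2^{n-2j}$ carried by $M_{n-2j}$, leaving $2^{-\mu-\delta-2}$). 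Simplifying the resulting ratios of Gamma functions then yields $c^j_{j-1}$ and $c^j_{j-2}$, hence the stated forms of $b^{(\mu,\beta^n_j)}_{j,j-1}$ and $b^{(\mu,\beta^n_j)}_{j,j-2}$; the prefactors $\frac{n-j+\mu+\delta+1}{n+\mu+\delta}$ and $\frac{n-j+\delta-1}{n+\mu+\delta}$ are just the specialisations of the corresponding $(\alpha,\beta)$-prefactors in Lemma~\ref{S1v}. I expect the main obstacle to be precisely this last step: every identity invoked is elementary, but organising the Gamma-function bookkeeping — in particular the explicit form of $\Delta_{n-2j,j-1}$ coming from Lemma~\ref{Kn} — so that it telescopes cleanly into the compact quantities $d^n_j$, $a^n_j$ and $a^{n-2}_{j-1}$ without slips is the genuinely delicate part.
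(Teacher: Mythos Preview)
Your proposal is correct and follows exactly the route the paper indicates: the paper's entire argument is the one-sentence remark preceding the theorem, namely to substitute $t=2\|x\|^2-1$ into (\ref{relq13}), multiply by $Y^{n-2j}_\nu(x)$, and invoke (\ref{baseP}) and (\ref{eq:sovpol}), together with the observation $\beta^n_j=\beta^{n-2}_{j-1}=\beta^{n-4}_{j-2}$. Your elaboration on how the rank-one structure of $M_{n-2j}$ (via Lemmas~\ref{inv}--\ref{le2}) and the values of Lemma~\ref{Kn} feed into the explicit constants $d^n_j$, $a^n_j$ goes beyond what the paper itself writes out; the paper simply records the outcome.
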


\bigskip

Let us define the kernels of the ball orthogonal polynomials and the kernels of the Sobolev
orthogonal polynomials
in the usual way,
\begin{eqnarray}
\mathbb{L}_n(x,y) & = & \sum_{m=0}^n \sum_{j=0}^{[m/2]} \sum_{\nu =1}^{a_{m-2j}^d}
P^m_{j,\nu}(x)\, P^m_{j,\nu}(y)
\,({H}^m_{j,\nu})^{-1},\label{KernelBall}\\
\tilde{\mathbb{L}}_n(x,y) & = &\sum_{m=0}^n \sum_{j=0}^{[m/2]} \sum_{\nu =1}^{a_{m-2j}^d}  Q^m_{j,\nu}(x)
\, Q^m_{j,\nu}(y)\,(\tilde{H}^m_{j,\nu})^{-1}.\label{KernelSob}
\end{eqnarray}

Then, we can establish a relation between these kernels by means of the kernels of univariate Jacobi
polynomials. From now on, let $C^{\delta}_k$ denote the usual ultraspherical polynomial (\cite[(4.7.1)
in p. 80]{Szego1975}).

\begin{prop}\label{prop 5.2}
For $n\geqslant0$ and $d\geqslant 3$, we get
\begin{eqnarray*}
\tilde{\mathbb{L}}_n(x,y) &=& \mathbb{L}_n(x,y) \\
& & -\frac{A_0}{\lambda} \,\sum_{k=0}^{n}
\mathbf{K}^{\mu,k+\delta}_{\left[\frac{n-k}{2}\right]}(2r^2-1,1)^t \,
\Lambda^{(\mu,k+\delta;M_k)}_{\left[\frac{n-k}{2}\right]}\, \mathbf{K}^{\mu,k+\delta}_{\left[\frac{n-k}{2}
\right]}(2s^2-1,1) \\
& & \quad\times \,2^{k}\, (r\,s)^{k}\,
\frac{k+\delta}{\delta} \,C_{k}^{\delta}(\langle \xi, \varrho \rangle),
\end{eqnarray*}
where $x=r \,\xi$, $y=s \,\varrho$, $r=\|x\|$, $s= \|y\|$, $\xi, \varrho \in \mathbb{S}^{d-1}$.
\end{prop}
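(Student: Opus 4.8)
The plan is to expand $\tilde{\mathbb{L}}_n(x,y)$ from its definition (\ref{KernelSob}), insert the explicit form (\ref{eq:sovpol}) of each $Q^m_{j,\nu}$ together with the norm (\ref{tilde--norm}), and then reorganize the triple sum according to the degree $k=m-2j$ of the spherical harmonic factor. Because $\beta_j^m=m-2j+\delta$ and $M_{m-2j}$ depend on $(m,j)$ only through $k$, and $(\tilde{H}^m_{j,\nu})^{-1}=\frac{2^{k}A_0}{\lambda}\big(\tilde{h}_j^{(\mu,k+\delta;M_k)}\big)^{-1}$, replacing the summation index $m$ by $k=m-2j$ (so that $m=k+2j$ and the constraint $m\le n$ becomes $j\le\left[\frac{n-k}{2}\right]$) gives
\[
\tilde{\mathbb{L}}_n(x,y)=\frac{A_0}{\lambda}\sum_{k=0}^{n}2^{k}\Bigg(\sum_{\nu=1}^{a_k^d}Y_\nu^{k}(x)\,Y_\nu^{k}(y)\Bigg)\Bigg(\sum_{j=0}^{\left[\frac{n-k}{2}\right]}\frac{q_j^{(\mu,k+\delta;M_k)}(2r^2-1)\,q_j^{(\mu,k+\delta;M_k)}(2s^2-1)}{\tilde{h}_j^{(\mu,k+\delta;M_k)}}\Bigg),
\]
with $r=\|x\|$, $s=\|y\|$.

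First I would evaluate the two inner factors. Writing $x=r\xi$, $y=s\varrho$ with $\xi,\varrho\in\mathbb{S}^{d-1}$, the homogeneity $Y_\nu^{k}(r\xi)=r^{k}Y_\nu^{k}(\xi)$ together with the addition (zonal harmonic) formula for spherical harmonics --- available because $d\ge 3$ makes $\delta=\frac{d-2}{2}>0$ --- yields
\[
\sum_{\nu=1}^{a_k^d}Y_\nu^{k}(x)\,Y_\nu^{k}(y)=(rs)^{k}\,\frac{k+\delta}{\delta}\,C_k^{\delta}(\langle\xi,\varrho\rangle).
\]
On the other hand the sum over $j$ is precisely the univariate Sobolev kernel $\tilde{K}_{\left[\frac{n-k}{2}\right]}(2r^2-1,2s^2-1)$ of Section 4 with parameters $(\alpha,\beta)=(\mu,k+\delta)$ and matrix $M_k$ (legitimate since $M_{m-2j}=M_k$ is constant once $k$ is fixed), so (\ref{K1v}) gives
\[
\tilde{K}_{\left[\frac{n-k}{2}\right]}(2r^2-1,2s^2-1)=K_{\left[\frac{n-k}{2}\right]}(2r^2-1,2s^2-1)-\mathbf{K}^{\mu,k+\delta}_{\left[\frac{n-k}{2}\right]}(2r^2-1,1)^t\,\Lambda^{(\mu,k+\delta;M_k)}_{\left[\frac{n-k}{2}\right]}\,\mathbf{K}^{\mu,k+\delta}_{\left[\frac{n-k}{2}\right]}(2s^2-1,1).
\]
Substituting this back splits $\tilde{\mathbb{L}}_n$ into a ``Jacobi'' part (the one carrying the Jacobi kernel $K_{\left[\frac{n-k}{2}\right]}$) and a ``$\Lambda$'' part; the latter is already, term by term, the correction announced in the statement, namely $-\frac{A_0}{\lambda}\sum_{k}2^{k}(rs)^{k}\frac{k+\delta}{\delta}C_k^{\delta}(\langle\xi,\varrho\rangle)\,\mathbf{K}^t\Lambda\mathbf{K}$.

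Finally, it remains to identify the Jacobi part with $\mathbb{L}_n(x,y)$, which I would do by running the previous reductions in reverse: expand the Jacobi kernel $K_{\left[\frac{n-k}{2}\right]}(2r^2-1,2s^2-1)$ via (\ref{KernelJac}), use the normalization identity
\[
\frac{A_0}{\lambda}\,2^{k}=2^{\mu+\delta+2+k}\,\frac{\omega_\mu}{\sigma_d}=2^{\mu+\beta_j^m+2}\,\frac{\omega_\mu}{\sigma_d}=\big(H^{\mu}_{j,m}\big)^{-1}h_j^{(\mu,\beta_j^m)}\qquad(m=k+2j),
\]
which follows from (\ref{a0}) and the value of $H^{\mu}_{j,m}$ in Lemma \ref{opb}, so that the factors $h_j^{(\mu,k+\delta)}$ cancel, reassemble $(rs)^k\frac{k+\delta}{\delta}C_k^{\delta}(\langle\xi,\varrho\rangle)$ back into $\sum_\nu Y_\nu^k(x)Y_\nu^k(y)$ through the addition formula, and re-index $(k,j)\mapsto m=k+2j$ to recover (\ref{KernelBall}). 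The main obstacle here is entirely bookkeeping: verifying that the ranges of summation are preserved under the substitution $m=k+2j$ --- so that the truncation $\left[\frac{n-k}{2}\right]$ of the univariate kernels is the correct one --- and carefully tracking the powers of $2$ through (\ref{a0}), (\ref{tilde--norm}) and the norm formula of Lemma \ref{opb}. The only substantive ingredients are the univariate kernel identity (\ref{K1v}) and the spherical-harmonic addition formula, the latter being exactly why the statement is restricted to $d\ge 3$.
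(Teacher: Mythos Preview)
Your argument is correct and actually takes a somewhat more direct route than the paper. The paper applies the \emph{termwise} identity (\ref{KC1v}) to each summand $Q^m_{j,\nu}(x)Q^m_{j,\nu}(y)(\tilde H^m_{j,\nu})^{-1}$, sums over $m,j,\nu$, and is then left with two triple sums involving $\Lambda_j$ and $\Lambda_{j-1}$ that must be shown to telescope (via the observation $\beta_{j+1}^{m+2}=\beta_j^m$) down to boundary terms $F(n)+F(n-1)$, which are finally reindexed by $k=n-2j$. You instead perform the change of index $(m,j)\mapsto(k,j)$ with $k=m-2j$ \emph{first}, which separates the spherical-harmonic factor from a univariate Sobolev kernel in $j$; you can then invoke the already-telescoped identity (\ref{K1v}) directly. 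This avoids reproducing the telescoping inside the multivariate sum and makes the identification of the Jacobi part with $\mathbb{L}_n$ essentially a repetition of the same index change. The paper's approach has the advantage of being closer to the structure of Section~4 (building on (\ref{KC1v}) rather than its consequence (\ref{K1v})), but yours is shorter and isolates more clearly that the only genuine inputs are (\ref{K1v}) and the addition formula.
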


\bigskip

\begin{proof}
Using (\ref{eq:sovpol}), (\ref{tilde--norm}), and (\ref{KC1v}), we get
\begin{eqnarray*}
\lefteqn{\hspace{-0.5cm}Q^m_{j,\nu}(x) Q^m_{j,\nu}(y)(\tilde{H}^m_{j,\nu})^{-1} }\\
&=&
q_j^{(\mu,\beta_j^m)}(2\,r^2 -1) Y_{\nu}^{m-2j}(x) q_j^{(\mu,\beta_j^m)}(2\,s^2 -1) Y_{\nu}^{m-2j}(y) \\
&~&\times \,2^{\beta_j^m+\mu+2}\, \frac{\omega_{\mu}}{\sigma_d} \,  (\tilde{h}_j^{(\mu,\beta_j^m)})^{-1} \\
&=&\Big(P_j^{(\mu,\beta_j^m)}(2\,r^2 -1)P_j^{(\mu,\beta_j^m)}(2\,s^2 -1)\Big(h_j^{(\mu,\beta_j^m)}\Big)^{-1}
 \\
& & - \mathbf{K}^{\mu,\beta_j^m}_{j}(2r^2-1,1)^t \, \Lambda^{(\mu,\beta_j^m;M_{m-2j})}_j\,
\mathbf{K}^{\mu,\beta_j^m}_{j}(2s^2-1,1) \\
& & + \mathbf{K}^{\mu,\beta_j^m}_{j-1}(2r^2-1,1)^t \, \Lambda_{j-1}^{(\mu,\beta_j^m;M_{m-2j})}\,
\mathbf{K}^{\mu,\beta_j^m}_{j-1}(2s^2-1,1)\Big) \\
& & \quad \times \,2^{m-2j}\,\frac{A_0}{\lambda} \, Y_{\nu}^{m-2j}(x)\, Y_{\nu}^{m-2j}(y).
\end{eqnarray*}
Then, summing above expressions for $m$, $j$, and $\nu$ we obtain
\begin{eqnarray*}
\lefteqn{\tilde{\mathbb{L}}_n(x,y)
= \mathbb{L}_n(x,y)}\\
& & - \sum_{m=0}^n\sum_{j=0}^{[m/2]}\sum_{\nu =1}^{a_{m-2j}^d}  \mathbf{K}^{\mu,\beta_j^m}_{j}(2r^2-1,1)^t
\, \Lambda^{(\mu,\beta_j^m;M_{m-2j})}_j\, \mathbf{K}^{\mu,\beta_j^m}_{j}(2s^2-1,1) \\
& & \quad\times \,2^{m-2j}\frac{A_0}{\lambda}\, (r\,s)^{m-2j}\, Y_{\nu}^{m-2j}(\xi)
\, Y_{\nu}^{m-2j}(\varrho) \\
& & +\sum_{m=2}^n\sum_{j=1}^{[m/2]}\sum_{\nu =1}^{a_{m-2j}^d}  \mathbf{K}^{\mu,\beta_j^m}_{j-1}(2r^2-1,1)^t
\, \Lambda^{(\mu,\beta_j^m;M_{m-2j})}_{j-1}\, \mathbf{K}^{\mu,\beta_j^m}_{j-1}(2s^2-1,1) \\
& & \quad\times \,2^{m-2j}\frac{A_0}{\lambda}\, (r\,s)^{m-2j}\, Y_{\nu}^{m-2j}(\xi)
\, Y_{\nu}^{m-2j}(\varrho),
\end{eqnarray*}
where we have used that $\mathbf{K}^{\mu,\beta_j^m}_{-1}(2r^2-1,1)=0$. Taking into account
the addition formula of spherical harmonics for $d\geqslant 3$ (see \cite[p. 9]{DaiXu2013})
$$\sum_{\nu =1}^{a_{k}^d} Y_{\nu}^{k}(\xi)\, Y_{\nu}^{k}(\varrho)= \frac{k+\delta}{\delta} \,
C_{k}^{\delta}(\langle \xi, \varrho \rangle),$$
we deduce
\begin{eqnarray*}
\lefteqn{\tilde{\mathbb{L}}_n(x,y) = \mathbb{L}_n(x,y)}\\
& & - \sum_{m=0}^n\sum_{j=0}^{[m/2]} \mathbf{K}^{\mu,\beta_j^m}_{j}(2r^2-1,1)^t \,
\Lambda^{(\mu,\beta_j^m;M_{m-2j})}_j\, \mathbf{K}^{\mu,\beta_j^m}_{j}(2s^2-1,1) \\
& & \quad\times \,2^{m-2j}\frac{A_0}{\lambda}\,(r\,s)^{m-2j}\,
\frac{m-2j+\delta}{\delta} \,C_{m-2j}^{\delta}(\langle \xi, \varrho \rangle) \\
& & +\sum_{m=2}^n\sum_{j=1}^{[m/2]} \mathbf{K}^{\mu,\beta_j^m}_{j-1}(2r^2-1,1)^t \,
\Lambda^{(\mu,\beta_j^m;M_{m-2j})}_{j-1}\, \mathbf{K}^{\mu,\beta_j^m}_{j-1}(2s^2-1,1) \\
& & \quad\times \,2^{m-2j}\frac{A_0}{\lambda}\, (r\,s)^{m-2j} \,
\frac{m-2j+\delta}{\delta} \,C_{m-2j}^{\delta}(\langle \xi, \varrho \rangle).
\end{eqnarray*}
Therefore, since $\beta_{j+1}^{m+2} = \beta^m_j$, a change in the indexes in the last term
gives
$$\tilde{\mathbb{L}}_n(x,y) = \mathbb{L}_n(x,y) - F(n) - F(n-1),$$
where
\begin{eqnarray*}
F(n) &=&\frac{A_0}{\lambda}\, \sum_{j=0}^{[\frac{n}{2}]}
\mathbf{K}^{\mu,\beta_j^n}_{j}(2r^2-1,1)^t \,
\Lambda^{(\mu,\beta_j^n;M_{m-2j})}_j\, \mathbf{K}^{\mu,\beta_j^n}_{j}(2s^2-1,1) \\
& & \quad\times \,2^{n-2j}\, \frac{\omega_{\mu}}{\sigma_d} \,(r\,s)^{n-2j}\,
\frac{n-2j+\delta}{\delta} \,C_{n-2j}^{\delta}(\langle \xi, \varrho \rangle),
\end{eqnarray*}
for $n\geqslant 0$, and $F(-1)=0$.

Finally, taking the change of indexes $n-2j=k$ in both expressions $F(n)$ and $F(n-1)$,
and summing, we get
\begin{eqnarray*}
\lefteqn{F(n) + F(n-1)}\\
& = &\frac{A_0}{\lambda}\,\sum_{k=0}^{n}
\mathbf{K}^{\mu,k+\delta}_{[\frac{n-k}{2}]}(2r^2-1,1)^t \,
\Lambda^{(\mu,k+\delta;M_k)}_{[\frac{n-k}{2}]}\, \mathbf{K}^{\mu,k+\delta}_{[\frac{n-k}{2}
]}(2s^2-1,1) \\
& & \quad\times \,2^{k}\,(r\,s)^{k}\,
\frac{k+\delta}{\delta} \,C_{k}^{\delta}(\langle \xi, \varrho \rangle),
\end{eqnarray*}
and the result follows.

\end{proof}

\section{Asymptotics for Christoffel functions}
\setcounter{equation}{0}

For the boundary of the ball, we shall prove:

\begin{theorem}\label{th1}
Assume that $\mu \geqslant-\frac{1}{2}$. For $\left\Vert x\right\Vert =1$,
\begin{equation}\label{ass1}
\lim_{n\rightarrow \infty }\frac{\mathbb{L}_{n}( x,x) -
\tilde{\mathbb{L}}_{n}( x,x) }{n^{2\mu +d+1}}=\frac{2}{\Gamma \left( 2\mu+d+2\right) }
\frac{( \mu +1) ( \mu +3) }{( \mu+2) ^{2}}.
\end{equation}
Moreover,
$$
\lim_{n\rightarrow \infty }\frac{\tilde{\mathbb{L}}_{n}( x,x) }{n^{2\mu +d+1}}=\frac{2}{\Gamma
( 2\mu +d+2) ( \mu +2)^{2}}.
$$
\end{theorem}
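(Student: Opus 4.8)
\medskip
\noindent\textit{Proof idea.} The plan is to deduce both limits from one ratio computation, by writing $\mathbb L_n(x,x)$ and the correction $\mathbb L_n(x,x)-\tilde{\mathbb L}_n(x,x)$ as sums over \emph{the same} index and the same weights, and then showing that the correction is asymptotically a fixed fraction of $\mathbb L_n(x,x)$. (We take $d\geqslant3$, so that Proposition~\ref{prop 5.2} applies; the case $d=2$ is dealt with in Section~7.) First I specialise Proposition~\ref{prop 5.2} to $y=x$ with $\left\Vert x\right\Vert=1$, so that $r=s=1$, $\xi=\varrho$ and $\langle\xi,\varrho\rangle=1$; since for an orthonormal basis of $\mathcal H_k^d$ one has $\sum_{\nu}Y_\nu^k(\xi)^2=a_k^d=\frac{k+\delta}{\delta}C_k^\delta(1)$ identically on the sphere, this gives
\begin{equation}\label{eq:plandiff}
\mathbb L_n(x,x)-\tilde{\mathbb L}_n(x,x)=\frac{A_0}{\lambda}\sum_{k=0}^{n}2^{k}\,a_k^d\;\mathbf{K}^{\mu,k+\delta}_{m_k}(1,1)^{t}\,\Lambda^{(\mu,k+\delta;M_k)}_{m_k}\,\mathbf{K}^{\mu,k+\delta}_{m_k}(1,1),
\end{equation}
with $m_k=[(n-k)/2]$. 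Expanding $\mathbb L_n$ in the same coordinates through Lemma~\ref{opb} (using $\sum_\nu Y_\nu^k(\xi)^2=a_k^d$, $K_m(1,1;\mu,\beta)=\sum_{j=0}^{m}P_j^{(\mu,\beta)}(1)^2/h_j^{(\mu,\beta)}$ and $A_0/\lambda=2^{\mu+\delta+2}\omega_\mu/\sigma_d$) gives, for $\left\Vert x\right\Vert=1$,
\begin{equation}\label{eq:planLn}
\mathbb L_n(x,x)=\frac{A_0}{\lambda}\sum_{k=0}^{n}2^{k}\,a_k^d\;K_{m_k}(1,1;\mu,k+\delta).
\end{equation}

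Since $\det M_k=0$ and $M_k=2^kA_0\,\mathbf{v}_k\mathbf{v}_k^{t}$ with $\mathbf{v}_k=(k,4)^{t}$, Lemma~\ref{le2} (with $\Delta_{k,m}=1+2^kA_0\,\mathbf{v}_k^{t}\mathcal{K}_m\mathbf{v}_k$) yields
$$
\mathbf{K}_m(1,1)^{t}\,\Lambda_m\,\mathbf{K}_m(1,1)=\frac{2^kA_0\,\bigl(\mathbf{v}_k^{t}\mathbf{K}_m(1,1)\bigr)^{2}}{1+2^kA_0\,\mathbf{v}_k^{t}\mathcal{K}_m\mathbf{v}_k}\;\leqslant\;K_m(1,1;\mu,k+\delta),
$$
the inequality by Cauchy--Schwarz, because $\mathbf{K}_m(1,1)$ is the first column of the positive semidefinite matrix $\mathcal{K}_m$. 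Substituting the three evaluations of Lemma~\ref{Kn} with $\alpha=\mu$, $\beta=k+\delta$ and pulling out the common factor $S_m=\frac{2^{-\mu-\beta-1}}{\Gamma(\mu+1)}\frac{\Gamma(m+\mu+\beta+2)}{\Gamma(m+\beta+1)}\frac{\Gamma(m+\mu+2)}{\Gamma(m+1)}$, the quotient $r_{n,k}:=\mathbf{K}_{m_k}(1,1)^{t}\Lambda_{m_k}\mathbf{K}_{m_k}(1,1)/K_{m_k}(1,1;\mu,k+\delta)$ becomes $N^2/\bigl(D+\Gamma(\mu+2)/(2^kA_0S_{m_k})\bigr)$, where $N=k+\frac{2P}{\mu+2}$ and $D=k^2+\frac{4kP}{\mu+2}+\frac{4P((\mu+2)P+\beta)}{(\mu+1)(\mu+2)(\mu+3)}$ with $P=m(m+\mu+\beta+2)$. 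Writing $t=k/n$: for $t$ in a fixed compact subinterval of $(0,1)$ one has $m_k/n\to(1-t)/2$ and $2^kS_{m_k}\to\infty$, and the elementary identity $D=N^2+\frac{4P^2}{(\mu+1)(\mu+3)(\mu+2)^2}+\frac{4P\beta}{(\mu+1)(\mu+2)(\mu+3)}$, together with $P\sim\frac14(1-t^2)n^2\gg k,\beta\sim n$, yields, uniformly on each $[\epsilon,1-\epsilon]$,
$$
r_{n,k}\longrightarrow\frac{(\mu+1)(\mu+3)}{(\mu+2)^2}.
$$

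Now put $w_{n,k}=2^k a_k^d\,K_{m_k}(1,1;\mu,k+\delta)\geqslant0$; by \eqref{eq:plandiff} and \eqref{eq:planLn},
$$
\frac{\mathbb L_n(x,x)-\tilde{\mathbb L}_n(x,x)}{\mathbb L_n(x,x)}=\frac{\sum_{k=0}^{n}r_{n,k}\,w_{n,k}}{\sum_{k=0}^{n}w_{n,k}},\qquad 0\leqslant r_{n,k}\leqslant1 .
$$
The crude bounds $2^kK_m(1,1;\mu,k+\delta)\leqslant C(m+k+1)^{\mu+1}(m+1)^{\mu+1}$ and $a_k^d\leqslant Ck^{d-2}$ give $\sum_{k<\epsilon n}w_{n,k}=O(\epsilon^{d-1}n^{2\mu+d+1})$ and $\sum_{k>(1-\epsilon)n}w_{n,k}=O(\epsilon^{\mu+2}n^{2\mu+d+1})$, while $\sum_k w_{n,k}\geqslant c\,n^{2\mu+d+1}$ (bounded below by the bulk terms); hence the contribution of the two edges to the weighted average is $o(1)$ as $\epsilon\to0$, uniformly in $n$, while on the bulk $r_{n,k}$ converges uniformly, and letting $n\to\infty$ and then $\epsilon\to0$ gives $(\mathbb L_n-\tilde{\mathbb L}_n)/\mathbb L_n\to(\mu+1)(\mu+3)/(\mu+2)^2$, and so $\tilde{\mathbb L}_n/\mathbb L_n\to1/(\mu+2)^2$. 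It remains to compute $\lim_n\mathbb L_n(x,x)/n^{2\mu+d+1}$ for $\left\Vert x\right\Vert=1$: inserting $K_{m_k}(1,1;\mu,k+\delta)$ from Lemma~\ref{Kn} and $a_k^d=(2k+d-2)\Gamma(k+d-2)/(\Gamma(d-1)\Gamma(k+1))$ into \eqref{eq:planLn} and using Stirling, the $k$-th summand divided by $n^{2\mu+d}$ converges, uniformly on $[\epsilon,1-\epsilon]$ and dominated by an integrable majorant, to a constant multiple of $(1-t^2)^{\mu+1}t^{d-2}$; thus \eqref{eq:planLn} is a Riemann sum, and evaluating it with $\omega_\mu/\sigma_d=\Gamma(\mu+1)\Gamma(d/2)/(2\Gamma(\mu+d/2+1))$, $\int_0^1(1-t^2)^{\mu+1}t^{d-2}\,dt=\tfrac12 B(\tfrac{d-1}{2},\mu+2)$, and the Legendre duplication formula twice, gives $\lim_n\mathbb L_n(x,x)/n^{2\mu+d+1}=2/\Gamma(2\mu+d+2)$. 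Multiplying this by the two ratio limits yields \eqref{ass1} and the second assertion of the theorem.

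The main obstacle is to make the asymptotics of the Jacobi--kernel quantities $K_m(1,1;\mu,k+\delta)$, $K_m^{(0,1)}(1,1;\mu,k+\delta)$ and $K_m^{(1,1)}(1,1;\mu,k+\delta)$ uniform \emph{simultaneously} as $m\to\infty$ and as the parameter $k+\delta\to\infty$ proportionally, and to control the two edge regimes $k=o(n)$ (where $m\sim n/2$ but the parameter stays comparatively small) and $n-k=o(n)$ (where $m$ is bounded, so the ``$1$'' in $\Lambda_{m}$ is not negligible) well enough to see that they do not affect the limit; the uniform bound $\mathbf{K}_m(1,1)^{t}\Lambda_m\mathbf{K}_m(1,1)\leqslant K_m(1,1;\mu,k+\delta)$ is exactly what turns this into a dominated Riemann--sum argument rather than a fragile term-by-term estimate.
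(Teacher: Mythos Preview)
Your argument is correct and takes a genuinely different route from the paper's. The paper computes the asymptotics of $\Psi_n(x)=\mathbb L_n(x,x)-\tilde{\mathbb L}_n(x,x)$ directly: it splits the sum at $k=n-[\log n]$, uses Lemma~\ref{le6.4} to turn the bulk into a Riemann sum for $E_0$, bounds the short tail crudely by $O(n^{2\mu+d}\log n)$, and then \emph{imports} the limit $\lim_n\mathbb L_n(x,x)/n^{2\mu+d+1}=2/\Gamma(2\mu+d+2)$ from Kro\'o--Lubinsky \cite{KrooLubinsky2013B} (this is exactly where the hypothesis $\mu\geqslant-\tfrac12$ enters). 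You instead write $\mathbb L_n(x,x)$ itself in the same parametrisation \eqref{eq:planLn}, show that the termwise ratio $r_{n,k}$ converges uniformly on $(\epsilon,1-\epsilon)$ to $(\mu+1)(\mu+3)/(\mu+2)^2$, and control both edges at once via the clean Cauchy--Schwarz inequality $\mathbf K_m(1,1)^t\Lambda_m\mathbf K_m(1,1)\leqslant K_m(1,1)$ (positive semidefiniteness of $\mathcal K_m$), which turns the tail estimate into the simple observation that the edge weights $w_{n,k}$ carry a vanishing fraction of the total mass. Finally you evaluate $\lim_n\mathbb L_n(x,x)/n^{2\mu+d+1}$ directly from \eqref{eq:planLn} as a Riemann sum, bypassing \cite{KrooLubinsky2013B}. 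What this buys you: the proof is self-contained and, in fact, your argument for both limits goes through for every $\mu>-1$, not just $\mu\geqslant-\tfrac12$. What the paper's route buys: the explicit tail cut at $n-[\log n]$ and the pointwise asymptotic $2^{2k}F_{k,m}(1)\sim D_0(m+k)^{\mu+1}m^{\mu+1}$ are reusable elsewhere in the paper, and citing \cite{KrooLubinsky2013B} keeps the boundary result in line with the general non-Sobolev theory.
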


\bigskip

We note that the restriction $\mu \geqslant-\frac{1}{2}$ arises because existing
asymptotics for Christoffel functions in the non--Sobolev case have only been
established for this range of $\mu$. Asymptotics for the interior of the ball
have been obtained as well.

\begin{theorem}\label{th2}

For $r=\left\Vert x\right\Vert < 1$, we have
\begin{eqnarray*}
0 &<& \mathbb{L}_{n}( x,x) -\tilde{\mathbb{L}}_{n}( x,x)\\
&\leqslant  &  Cn^{d-1}
\log n \Big( 2 (1-r^{2}) +\frac{4}{n^{2}}\Big)^{-\mu -\frac{1}{2}}
\Big( 2r^{2}+\frac{4}{n^{2}}\Big) ^{-\delta -\frac{1}{2}}.
\end{eqnarray*}
Here $C$ is independent of $n$ and $x$. Consequently if $\mu \geqslant-\frac{1}{2}$, uniformly for $x$ in compact
subsets of $\left\{ x:0<\left\Vert x\right\Vert <1\right\} $,
\begin{equation}\label{ass2}
\lim_{n\rightarrow \infty }\tilde{\mathbb{L}}_{n}( x,x) / \binom{n+d}{d}
=\frac{1}{\sqrt{\pi }}
\frac{\Gamma ( \mu +1) \Gamma ( \frac{d+1}{2}) }{\Gamma ( \mu +\frac{d}{2}+1) }
\left(1-\left\Vert x\right\Vert ^{2}\right) ^{-\frac{1}{2}-\mu }.
\end{equation}
This last limit also holds for $x=0$.
\end{theorem}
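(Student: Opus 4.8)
The plan is to exploit the explicit difference formula from Proposition~\ref{prop 5.2}, which expresses $\mathbb{L}_n(x,x)-\tilde{\mathbb{L}}_n(x,x)$ as a single sum over $k$ of nonnegative terms built from the quadratic forms $\mathbf{K}^{\mu,k+\delta}_{[\frac{n-k}{2}]}(2r^2-1,1)^t\,\Lambda^{(\mu,k+\delta;M_k)}_{[\frac{n-k}{2}]}\,\mathbf{K}^{\mu,k+\delta}_{[\frac{n-k}{2}]}(2r^2-1,1)$ times $2^k r^{2k}\tfrac{k+\delta}{\delta}C_k^\delta(1)$ (here $\xi=\varrho$, so $\langle\xi,\varrho\rangle=1$). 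Positivity of each summand is immediate since $\Lambda^{(\mu,k+\delta;M_k)}_m=\tfrac{1}{\Delta_{k,m}}M_k$ by Lemma~\ref{le2}, $M_k$ is positive semidefinite, $\Delta_{k,m}>0$, and $C_k^\delta(1)>0$; this gives the left inequality $0<\mathbb{L}_n-\tilde{\mathbb{L}}_n$. For the upper bound, first I would use Lemma~\ref{inv}/Lemma~\ref{le2} to replace $\Lambda$ by $\tfrac{1}{\Delta_{k,m}}M_k$, and then observe that $\mathbf{v}^t M_k \mathbf{v}=2^k A_0\,(k\,v_1+4\,v_2)^2\le \tfrac{2^k A_0}{\Delta_{k,m}}\cdot(\text{something controlled by }\mathcal{K}_m)$; more directly, since $\Delta_{k,m}=1+\mathrm{trace}(M_k\mathcal{K}_m)\ge \mathrm{trace}(M_k\mathcal{K}_m)$ and the numerator quadratic form in $\mathbf{K}_m(2r^2-1,1)$ is dominated by $\mathcal{K}_m$ evaluated at the same order, each term is bounded by a quantity of the form $2^k A_0\,\big(\text{ratio of Jacobi kernel quantities}\big)$ that no longer blows up. The upshot is that the $k$-th summand is $O\big(2^k r^{2k}k^{d-2}\cdot \rho_{k,m}\big)$ where $\rho_{k,m}$ is a bounded ratio of values $K_m(\cdot,1)^2$, $K_m(1,1)$, etc., of the Jacobi kernels with parameters $(\mu,k+\delta)$ at the specific point $t=2r^2-1$.

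Next I would insert the classical pointwise bounds for Jacobi polynomials $P_m^{(\alpha,\beta)}(t)$ and their kernels on $(-1,1)$: away from $\pm1$ one has $|P_m^{(\alpha,\beta)}(t)|\le C\,m^{-1/2}\,(1-t)^{-\alpha/2-1/4}(1+t)^{-\beta/2-1/4}$, and the kernel $K_m(t,1;\alpha,\beta)$ together with $K_m^{(0,1)}(t,1)$ can be estimated using \eqref{K(t,1)}, \eqref{K01(t,1)} and the same Jacobi bounds. With $t=2r^2-1$, so $1-t=2(1-r^2)$ and $1+t=2r^2$, and $\alpha=\mu$, $\beta=k+\delta$, this produces exactly the two factors $\big(2(1-r^2)+\tfrac{4}{n^2}\big)^{-\mu-1/2}$ and $\big(2r^2+\tfrac{4}{n^2}\big)^{-\delta-1/2}$ in the statement once one accounts, via the standard Mehler--Heine-type regularization, for the transitional regime $r^2$ or $1-r^2$ of order $1/n^2$ (the $\tfrac{4}{n^2}$ shifts absorb the boundary layers). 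The factor $2^k r^{2k}k^{d-2}$ is what makes the sum over $0\le k\le n$ behave like $n^{d-1}\log n$: after the Jacobi estimates each term is roughly $C\,k^{d-2}\cdot\tfrac1k$ times the stated $(r,n)$-dependent factor, and $\sum_{k\le n}k^{d-3}\cdot(\text{bounded})$ is $O(n^{d-2})$ while the near-diagonal terms $k\approx n$ contribute the extra $\log n$ through a harmonic-type sum. Collecting the constants (all independent of $n$ and $x$ by construction) yields the displayed inequality.

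For the consequence \eqref{ass2}, the plan is to combine this upper bound with the known classical asymptotics of Xu \cite{Xu1996} for $\mathbb{L}_n(x,x)$: uniformly on compact subsets of $\{0<\|x\|<1\}$ (and at $x=0$),
\[
\lim_{n\to\infty}\mathbb{L}_n(x,x)\Big/\binom{n+d}{d}=\frac{1}{\sqrt\pi}\,\frac{\Gamma(\mu+1)\Gamma(\frac{d+1}{2})}{\Gamma(\mu+\frac d2+1)}\,(1-\|x\|^2)^{-\frac12-\mu}.
\]
On a fixed compact subset $K\subset\{0<\|x\|<1\}$ the quantities $1-r^2$ and $r^2$ are bounded below by a positive constant, so the upper bound from the first part is $O(n^{d-1}\log n)=o(n^d)=o\big(\binom{n+d}{d}\big)$; hence $\tilde{\mathbb{L}}_n(x,x)/\binom{n+d}{d}$ has the same limit as $\mathbb{L}_n(x,x)/\binom{n+d}{d}$. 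For $x=0$ one checks that $r^{2k}$ kills every $k\ge1$ term in Proposition~\ref{prop 5.2}, leaving only $k=0$, which is itself $O(n^{d-2})$ (since $\beta_j^n$-Jacobi kernels at the fixed point $t=-1$ grow only polynomially of lower order there), again negligible against $\binom{n+d}{d}$. The main obstacle is the second paragraph: obtaining the uniform Jacobi kernel estimates with the correct $1/n^2$ regularization and tracking the exponents $-\mu-\tfrac12$ and $-\delta-\tfrac12$ precisely through \eqref{K(t,1)}--\eqref{K01(t,1)}, and confirming that the summation over $k$ genuinely produces no worse than $n^{d-1}\log n$ rather than $n^{d-1}$ or $n^d$; this requires careful bookkeeping of the interplay between the $2^k r^{2k}$ geometric factor, the ultraspherical dimension factor $C_k^\delta(1)\sim c\,k^{d-3}$, and the $m=[\tfrac{n-k}{2}]$ dependence of the Jacobi estimates.
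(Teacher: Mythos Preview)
Your overall strategy matches the paper's: start from Proposition~\ref{prop 5.2} (equivalently Lemma~\ref{psin}), observe positivity via Lemma~\ref{le2}, estimate $\Psi_n(x)$ term by term, and then invoke the known interior asymptotics for $\mathbb{L}_n$. The $x=0$ argument is also essentially what the paper does. However, the central step --- the Jacobi kernel estimates --- has two genuine gaps that would make the argument as written fail.

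First, the pointwise bound $|P_m^{(\alpha,\beta)}(t)|\le C\,m^{-1/2}(1-t)^{-\alpha/2-1/4}(1+t)^{-\beta/2-1/4}$ you invoke carries a constant $C=C(\alpha,\beta)$; here $\beta=k+\delta$ with $k$ ranging from $0$ to $n$, so this bound is \emph{not} uniform in $k$. The paper addresses this in Lemma~\ref{le6}: writing $(1+s)^{k+\delta}=(1+s)^{2\ell}(1+s)^{\delta}$ (for $k=2\ell$) and absorbing $(1+s)^{\ell}$ into the polynomial reduces the Christoffel function $K_m(t,t;\mu,k+\delta)$ to $(1+t)^{-k}K_{m+[k/2]}(t,t;\mu,\delta)$ with \emph{fixed} parameters, for which Nevai's bound applies with a uniform constant. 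Second, and more seriously, plugging \eqref{K(t,1)}--\eqref{K01(t,1)} into that pointwise bound yields the wrong $(1-t)$-exponent: since $K_m(t,1)\propto P_m^{(\mu+1,\beta)}(t)$, you get $|K_m(t,1)|^2\lesssim(1-t)^{-\mu-3/2}$, and $|K_m^{(0,1)}(t,1)|^2$ (built from $P^{(\mu+2,\beta)}$) is worse still; dividing by $\Delta_{k,m}$ does not repair this. The paper instead uses the Christoffel--Darboux formula to bound $|K_m(t,1)|$ and $|K_m^{(0,1)}(t,1)|$ by $K_{m+1}(t,t)^{1/2}$ times explicit quantities at $1$ (see \eqref{0cotakm}--\eqref{0cotakm01}); the latter are controlled uniformly via \eqref{JacON1}--\eqref{JacON2}, and the single factor $K_{m+1}(t,t)$ then delivers exactly $(1-t+\tfrac{4}{n^2})^{-\mu-1/2}$ through Lemma~\ref{le6}. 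This yields $2^k F_{k,m}(t)\le C\,K_{m+1}(t,t)/(m+1)$, after which the $\log n$ comes from the harmonic sum $\sum_{k=0}^n\tfrac{1}{[(n-k)/2]+1}$ --- not from a ``$k^{d-2}\cdot\tfrac{1}{k}$'' mechanism as you describe.
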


\bigskip

In this section, we shall use the abbreviation that for $m=[ \frac{n-k}{2}] $,
\[
K_{m}( x,y) =K_{m}(x,y;\mu, k+\delta).
\]
Thus $k,m$ and $n$ are linked. We now turn to the Christoffel function.

\begin{lemma}\label{psin}
For $d\geqslant 3$ and $n\geqslant 0$, we get
$$
\tilde{\mathbb{L}}_{n}( x,x) =\mathbb{L}_{n}( x,x)-\Psi _{n}(x) ,
$$
where
\begin{equation}\label{psi}
\Psi _{n}(x) =\frac{A_{0}^{2}}{\lambda\,\delta }\sum_{k=0}^{n}2^{2k}( k+\delta)
\binom{k+d-3}{k} r^{2k} F_{k,m}(t).
\end{equation}
Here \,\,$x=r\xi , \,\,r=\left\Vert x\right\Vert, \,\,m =
[ \frac{n-k}{2}], \,\,t=2r^{2}-1,$ and
\begin{equation}\label{L3-Fkm}
F_{k,m}(t) =\frac{k^{2} K_{m}( t,1)^{2}+8 k K_{m}(t,1) K_{m}^{( 0,1)}( t,1) +
16 K_{m}^{(0,1)}(t,1)^{2}}{1+2^{k} A_{0} \left\{ k^{2} K_{m}(1,1) + 8 k K_{m}^{(1,0) }
(1,1) + 16 K_{m}^{(1,1) }(1,1) \right\} }.
\end{equation}
\end{lemma}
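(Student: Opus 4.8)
The plan is to obtain Lemma~\ref{psin} directly from Proposition~\ref{prop 5.2} by putting $y=x$ and then simplifying the scalar quadratic form attached to each index $k$. Setting $y=x$ forces $r=s=\|x\|$, $\xi=\varrho$, hence $\langle\xi,\varrho\rangle=1$; since $\delta=\frac{d-2}{2}>0$ for $d\geqslant3$ and the ultraspherical polynomial satisfies $C_k^{\delta}(1)=\binom{k+2\delta-1}{k}=\binom{k+d-3}{k}$, the spherical--harmonic factor $\frac{k+\delta}{\delta}C_k^{\delta}(\langle\xi,\varrho\rangle)$ reduces to $\frac{k+\delta}{\delta}\binom{k+d-3}{k}$. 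Writing $t=2r^2-1$ and $m=[\frac{n-k}{2}]$ (the Jacobi kernels carrying parameters $(\mu,k+\delta)$, as in the abbreviation adopted just before the statement, so that $\mathbf{K}_m$, $\Lambda_m$, $M_k$ all depend on $k$), Proposition~\ref{prop 5.2} becomes
\[
\tilde{\mathbb{L}}_n(x,x)=\mathbb{L}_n(x,x)-\frac{A_0}{\lambda}\sum_{k=0}^{n}2^{k}r^{2k}\,\frac{k+\delta}{\delta}\binom{k+d-3}{k}\,\mathbf{K}_m(t,1)^t\,\Lambda_m^{(\mu,k+\delta;M_k)}\,\mathbf{K}_m(t,1),
\]
where $\mathbf{K}_m(t,1)=\big(K_m(t,1),\,K_m^{(0,1)}(t,1)\big)^t$.

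Next I use Lemma~\ref{le2}: because $\det M_k=0$, it yields $\Lambda_m^{(\mu,k+\delta;M_k)}=\Delta_{k,m}^{-1}M_k$, and $\Delta_{k,m}$ from~(\ref{delta}) is precisely the denominator of $F_{k,m}(t)$ in~(\ref{L3-Fkm}) once one uses the symmetry $K_m^{(0,1)}(1,1)=K_m^{(1,0)}(1,1)$ of the Jacobi kernel. For the numerator I expand the quadratic form using $M_k=2^{k}A_0\left[\begin{array}{cc}k^2&4k\\4k&16\end{array}\right]$ from~(\ref{defM}), obtaining
\[
\mathbf{K}_m(t,1)^t\,M_k\,\mathbf{K}_m(t,1)=2^{k}A_0\big(k^2K_m(t,1)^2+8k\,K_m(t,1)\,K_m^{(0,1)}(t,1)+16\,K_m^{(0,1)}(t,1)^2\big),
\]
so that $\mathbf{K}_m(t,1)^t\,\Lambda_m^{(\mu,k+\delta;M_k)}\,\mathbf{K}_m(t,1)=2^{k}A_0\,F_{k,m}(t)$.

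Substituting this identity into the display above, the $k$-th summand acquires the constant $\frac{A_0}{\lambda}\cdot 2^{k}\cdot 2^{k}A_0=\frac{A_0^2}{\lambda}2^{2k}$, which together with the $\frac1\delta$ coming from the addition formula reproduces the general term of~(\ref{psi}); summing over $k$ from $0$ to $n$ gives the asserted formula for $\Psi_n(x)$. I do not expect any genuine obstacle here: the proof is essentially bookkeeping built on the two already-established facts (Proposition~\ref{prop 5.2} and Lemma~\ref{le2}). The only points needing a little care are matching the $(0,1)$ and $(1,0)$ partial-derivative notations via the kernel's symmetry, and keeping in mind that both the parameter $k+\delta$ of the Jacobi kernels and the truncation $m=[\frac{n-k}{2}]$ are slaved to the summation variable $k$.
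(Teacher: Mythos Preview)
Your proposal is correct and follows essentially the same route as the paper: specialize Proposition~\ref{prop 5.2} to $y=x$, evaluate $C_k^\delta(1)=\binom{k+d-3}{k}$, apply Lemma~\ref{le2} to replace $\Lambda_m^{(\mu,k+\delta;M_k)}$ by $\Delta_{k,m}^{-1}M_k$, and expand the resulting quadratic form. The paper's proof is precisely this bookkeeping, with the same use of the kernel's symmetry to match the $(0,1)$ and $(1,0)$ derivatives in $\Delta_{k,m}$.
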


\bigskip

\begin{proof}
From Proposition \ref{prop 5.2},
\begin{eqnarray}
\Psi _{n}(x) &=&\frac{A_0}{\lambda}\sum_{k=0}^{n}
\mathbf{K}_{[\frac{n-k}{2}]}^{\mu ,k+\delta}(2r^{2}-1,1)^{t}
\Lambda _{[ \frac{n-k}{2}] }^{(\mu ,k+\delta;M_{k})}
\mathbf{K}_{[ \frac{n-k}{2}] }^{\mu,k+\delta}(2r^{2}-1,1)
\nonumber
\\\label{aux1-psi}
&&\times 2^{k}r^{2k}\frac{k+\delta }{\delta }C_{k}^{\delta }(1) .
\end{eqnarray}
Here $C_{k}^{\delta }$ is an ultraspherical polynomial, so that \cite[p. 80, (4.7.3)]{Szego1975}
\begin{equation}\label{cheby1}
C_{k}^{\delta }\left( 1\right) = \binom{k+2\delta -1}{k} = \binom{k+d-3}{k}.
\end{equation}
Using Lemma~\ref{le2} and (\ref{Kalpha}), a straightforward computation shows that
\begin{eqnarray*}
\lefteqn{\mathbf{K}_{[ \frac{n-k}{2}] }^{\mu ,k+\delta}( 2r^{2}-1,1) ^{t}
\Lambda _{[ \frac{n-k}{2}]}^{( \mu ,k+\delta;M_{k}) }
\mathbf{K}_{[ \frac{n-k}{2}] }^{\mu ,k+\delta}(2r^{2}-1,1) }
\\
&=&\frac{2^{k}A_{0}}{\Delta _{k,m}}\left\{ k^{2}K_{m}(t,1)^{2} + 8 k K_{m}(t,1)
K_{m}^{(0,1)}(t,1) + 16 K_{m}^{(0,1)}(t,1)^{2}\right\} .
\end{eqnarray*}
Substituting this, (\ref{delta}) and (\ref{cheby1}) into (\ref{aux1-psi}), gives the result.

\end{proof}

\bigskip

In particular, for $r=\|x\|=1$, we see that $t=1$ and
$$
\Psi _{n}(x) =\frac{A_{0}^{2}}{\lambda\,\delta }\sum_{k=0}^{n}2^{2k}(k+\delta)
\binom{k+d-3}{k} F_{k,m} \left( 1\right),
$$
where
\begin{equation}\label{fkm1}
F_{k,m}(1) =\frac{k^{2} K_{m}(1,1)^{2} + 8 k K_{m}(1,1)
K_{m}^{(0,1)}(1,1) + 16 K_{m}^{(0,1) } (1,1)^{2}}{1+2^{k} A_{0}
\left\{ k^{2} K_{m}(1,1) + 8 k K_{m}^{(1,0)}(1,1) + 16 K_{m}^{(1,1)}(1,1) \right\} }.
\end{equation}

\bigskip

Next, we obtain asymptotics involving the reproducing kernel as $m\rightarrow \infty:$

\begin{lemma}\label{le6.4}
As $m\rightarrow \infty ,$ uniformly for $k\geqslant0,$ the following asymptotics hold

\begin{itemize}
\item[\hbox{{\rm (i)}}]
\begin{equation}\label{L4a}
K_{m}(1,1) =2^{-k} (m+k)^{\mu +1} m^{\mu+1} B_{0} (1+o(1)) ,
\end{equation}
where
\begin{equation}\label{b0}
B_{0}=\frac{2^{-\mu -\delta -1}}{\Gamma(\mu +1) \Gamma (
\mu +2) }.
\end{equation}

\item[\hbox{{\rm (ii)}}]
\begin{equation}\label{L4b}
\hspace{-6pt}\frac{K_{m}^{(0,1) }(1,1) }{K_{m}(1,1)}=
\frac{(m+\mu +k+\delta +2) m}{2 (\mu +2) }=\frac{(m+k) m}{2
( \mu +2) }(1+o(1)).
\end{equation}

\item[\hbox{{\rm (iii)}}]
\begin{eqnarray}
\frac{K_{m}^{( 1,1) }( 1,1) }{K_{m}( 1,1) }&=&
\frac{( m+\mu +k+\delta +2) m}{4( \mu +1) (\mu +2) ( \mu +3) }
\nonumber\\
&~& \times\,(( \mu +2) m(m+\mu +k+\delta +2) +k+\delta)
\nonumber
\\ \label{L4c}
&=&\frac{( m+k) ^{2}m^{2}}{4( \mu +1)
( \mu +3) }(1+o(1)) .
\end{eqnarray}
\end{itemize}
\end{lemma}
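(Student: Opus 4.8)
The plan is to obtain all three asymptotics directly from the closed forms for $K_m(1,1)$, $K_m^{(0,1)}(1,1)$ and $K_m^{(1,1)}(1,1)$ recorded in Lemma~\ref{Kn}, specialised to the parameters $(\alpha,\beta)=(\mu,k+\delta)$ (so that $\alpha+\beta=\mu+k+\delta$), and then to extract the leading behaviour as $m\to\infty$ by repeated application of the ratio asymptotic
\[
\frac{\Gamma(m+a)}{\Gamma(m+b)}=m^{a-b}\bigl(1+O(1/m)\bigr),
\]
valid uniformly for bounded $a-b$. The main point to watch is \emph{uniformity in $k$}: here $k$ is \emph{not} bounded, it ranges over $0\le k\le n$ with $m=[\tfrac{n-k}{2}]$, so ratios such as $\Gamma(m+\alpha+\beta+2)/\Gamma(m+\beta+1)=\Gamma(m+\mu+k+\delta+2)/\Gamma(m+k+\delta+1)$ involve the quantity $m+k$ rather than $m$ alone. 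The device is to write every such ratio in the form $\Gamma((m+k)+a')/\Gamma((m+k)+b')$ with $a'-b'$ bounded (here equal to $\mu+1$), so that it equals $(m+k)^{\mu+1}(1+o(1))$ with an error term uniform in $k$; the remaining factors $\Gamma(m+\mu+2)/\Gamma(m+1)=m^{\mu+1}(1+o(1))$ depend only on $m$ and are automatically uniform.

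For part (i), I would take the first identity of Lemma~\ref{Kn},
\[
K_m(1,1)=\frac{2^{-\alpha-\beta-1}}{\Gamma(\alpha+1)}\,
\frac{\Gamma(m+\alpha+\beta+2)}{\Gamma(m+\beta+1)}\,
\frac{\Gamma(m+\alpha+2)}{\Gamma(m+1)\,\Gamma(\alpha+2)},
\]
substitute $\alpha=\mu$, $\beta=k+\delta$, pull out the constant $2^{-\mu-\delta-1}/(\Gamma(\mu+1)\Gamma(\mu+2))=B_0$ and the factor $2^{-k}$, split $\Gamma(m+\mu+k+\delta+2)/\Gamma(m+k+\delta+1)=(m+k)^{\mu+1}(1+o(1))$ and $\Gamma(m+\mu+2)/\Gamma(m+1)=m^{\mu+1}(1+o(1))$, and combine to get \eqref{L4a}. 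For part (ii) it is cleanest to divide the second formula of Lemma~\ref{Kn} by the first, which already cancels all the $\Gamma$-factors except $\Gamma(m+\alpha+\beta+3)/\Gamma(m+\alpha+\beta+2)=m+\alpha+\beta+2$, $\Gamma(m+1)/\Gamma(m)=m$, $\Gamma(\alpha+2)/\Gamma(\alpha+3)=1/(\alpha+2)$ and a factor $2^{-1}$; this gives the exact identity $K_m^{(0,1)}(1,1)/K_m(1,1)=(m+\mu+k+\delta+2)m/(2(\mu+2))$, and then $m+\mu+k+\delta+2=(m+k)(1+o(1))$ uniformly in $k$ yields \eqref{L4b}. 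Part (iii) is the same manoeuvre with the third formula over the first: the ratio of $\Gamma$-factors again telescopes to an exact rational expression, producing $K_m^{(1,1)}(1,1)/K_m(1,1)=\dfrac{(m+\mu+k+\delta+2)m}{4(\mu+1)(\mu+2)(\mu+3)}\bigl((\mu+2)m(m+\mu+k+\delta+2)+k+\delta\bigr)$, which is the first displayed line of \eqref{L4c}; replacing $m+\mu+k+\delta+2$ by $(m+k)(1+o(1))$ and noting that the additive term $k+\delta$ is of lower order than $(\mu+2)m(m+k)$ (since $m\to\infty$) gives the final line.

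The only genuinely delicate point, and the one I would emphasise, is controlling the lower-order term $k+\delta$ inside the second factor of (iii) uniformly in $k$: one must check that $(k+\delta)/\bigl((\mu+2)m(m+\mu+k+\delta+2)\bigr)\to 0$ uniformly over $0\le k\le n$ with $m=[\tfrac{n-k}{2}]$. Since $m(m+k)\ge m\cdot k$ and also $m(m+k)\ge m^2$, we have $(k+\delta)/(m(m+k))\le (k+\delta)/(mk)+\delta/m^2\le 2/m+\delta/m^2\to0$, which is the required uniform estimate; the hypotheses of the lemma are exactly "$m\to\infty$, uniformly for $k\ge0$", and no upper bound on $k$ is needed for this bound. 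All other error terms are of the schematic form $\Gamma(N+a)/\Gamma(N+b)=N^{a-b}(1+O(1/N))$ with $N\in\{m,\,m+k\}\to\infty$ and $a-b$ an absolute constant, hence uniform, so the proof is a bookkeeping exercise once this single point is isolated.
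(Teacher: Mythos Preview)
Your approach is correct and coincides with the paper's own proof, which likewise specialises Lemma~\ref{Kn} at $(\alpha,\beta)=(\mu,k+\delta)$, uses $\Gamma(x+1)=x\Gamma(x)$ to obtain the exact ratios in (ii) and (iii), and invokes the Stirling estimate $\Gamma(x+b)/\Gamma(x+a)=x^{b-a}(1+o(1))$ applied with $x\in\{m,m+k\}$ for the asymptotics. Your discussion of uniformity in $k$ is more explicit than the paper's (which is terse); the arithmetic in your bound $(k+\delta)/(m(m+k))\le (k+\delta)/(mk)+\delta/m^2\le 2/m+\delta/m^2$ is slightly off (the case $k=0$ and the constant $2$ need adjusting), but the intended and correct estimate $(k+\delta)/(m(m+k))\le (1+\delta)/m$ for $k\ge1$ and $=\delta/m^2$ for $k=0$ gives exactly the uniform $o(1)$ you need.
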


\begin{proof}
From Lemma \ref{Kn} with $\alpha =\mu $ and $\beta =k+\delta $, the formulas follow using $\Gamma ( x+1) =x\Gamma ( x)$
and the following consequence of Stirling's formula: for fixed $a,b$, as $x\rightarrow \infty$,
\[
\frac{\Gamma (x+b)}{\Gamma ( x+a) }=x^{b-a}( 1+o(
1) ).
\]

\end{proof}

\bigskip

\begin{lemma}\label{le5}

\begin{itemize}
\item[\hbox{{\rm (i)}}]\label{le5a}
Let
\begin{equation}\label{d0}
D_{0}=B_{0}\frac{( \mu +1) ( \mu +3) }{A_{0}( \mu+2) ^{2}}.
\end{equation}
Then as $m\rightarrow \infty $, uniformly in $k\geqslant0,$
$$
2^{2k}F_{k,m}( 1) =D_{0}( m+k) ^{\mu +1}m^{\mu+1}
(1+o(1)).
$$

\item[\hbox{{\rm (ii)}}]\label{le5b} For $\|x\| =1$, let
$$
\Psi _{n,1}(x) =\frac{A_{0}^{2}}{\lambda\,\delta }\sum_{k=0}^{n-\left[\log n\right] }2^{2k}
( k+\delta ) \binom{k+d-3}{k} F_{k,m} (1).
$$
Then
\begin{equation}\label{e0}
\lim_{n\rightarrow \infty }\frac{\Psi _{n,1}(x) }{n^{2\mu +d+1}}=E_{0},
\end{equation}
where
\begin{equation}\label{E0}
E_{0}=\frac{2}{\Gamma ( 2\mu +d+2) }\frac{( \mu +1)
( \mu +3) }{( \mu +2)^{2}}.
\end{equation}
\end{itemize}
\end{lemma}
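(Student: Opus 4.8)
The plan is to prove part (i) first, and then derive part (ii) by an Abel/Riemann-sum argument. For part (i), I start from the explicit formula \eqref{fkm1} for $F_{k,m}(1)$, and substitute the three asymptotic relations of Lemma~\ref{le6.4}. Writing $L=K_m(1,1)$ for brevity, the numerator of $F_{k,m}(1)$ equals
$$
L^2\Bigl(k + 4\,\tfrac{K_m^{(0,1)}(1,1)}{L}\Bigr)^2
= L^2\Bigl(k + \tfrac{2(m+k)m}{\mu+2}(1+o(1))\Bigr)^2,
$$
using \eqref{L4b}. The denominator is $1+2^kA_0\bigl\{k^2L + 8kK_m^{(1,1)}(1,1)\cdot\tfrac{L}{L}\cdots\bigr\}$; more precisely, using \eqref{L4b}--\eqref{L4c} to express $K_m^{(1,0)}(1,1)=K_m^{(0,1)}(1,1)$ and $K_m^{(1,1)}(1,1)$ as $L$ times lower-order polynomial factors, the brace becomes $L$ times a quadratic-in-$m$ expression, and since $2^kL$ grows like $(m+k)^{\mu+1}m^{\mu+1}$ by \eqref{L4a}, the $1$ is negligible and the denominator is asymptotically $2^kA_0 L\cdot(\text{that quadratic})$. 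The key algebraic point is that the quadratic factor appearing (from $k^2 + 8k\cdot\tfrac{(m+k)m}{2(\mu+2)} + 16\cdot\tfrac{(m+k)^2m^2}{4(\mu+1)(\mu+3)}$, divided by the matching bracket in the numerator) must cancel cleanly; I would verify that the ratio numerator/denominator simplifies, after dividing through, to $2^{-k}L\cdot\frac{(\mu+1)(\mu+3)}{A_0(\mu+2)^2}(1+o(1))$. Multiplying by $2^{2k}$ and invoking \eqref{L4a} once more gives $2^{2k}F_{k,m}(1)=2^kL\cdot\frac{(\mu+1)(\mu+3)}{A_0(\mu+2)^2}(1+o(1)) = (m+k)^{\mu+1}m^{\mu+1}B_0\frac{(\mu+1)(\mu+3)}{A_0(\mu+2)^2}(1+o(1)) = D_0(m+k)^{\mu+1}m^{\mu+1}(1+o(1))$, as claimed. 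Uniformity in $k\ge0$ is inherited from the uniformity in Lemma~\ref{le6.4}.

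For part (ii), with $\|x\|=1$ I insert the estimate from (i) into $\Psi_{n,1}(x)$. Since $m=[\frac{n-k}{2}]$ and the sum runs over $0\le k\le n-[\log n]$, in this range $m\to\infty$ as $n\to\infty$ uniformly, so the $(1+o(1))$ is legitimate term-by-term and, because all terms are nonnegative, also after summation. Next I approximate $\binom{k+d-3}{k}\sim k^{d-3}/(d-3)!$ (for $d\ge3$; the case $d=3$ is trivial and $\binom{k+d-3}{k}$ is a polynomial in $k$ of exact degree $d-3$), and $k+\delta\sim k$, and $m+k\sim \frac{n+k}{2}$, $m\sim\frac{n-k}{2}$. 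Thus the summand is asymptotically
$$
\frac{A_0^2}{\lambda\,\delta}\,D_0\,\frac{1}{(d-3)!}\;k^{d-2}\,
\Bigl(\tfrac{n+k}{2}\Bigr)^{\mu+1}\Bigl(\tfrac{n-k}{2}\Bigr)^{\mu+1}.
$$
Writing $k=ns$, this is a Riemann sum: $\Psi_{n,1}(x)/n^{2\mu+d+1}$ converges to
$$
\frac{A_0^2 D_0}{\lambda\,\delta\,(d-3)!}\,2^{-2\mu-2}\int_0^1 s^{d-2}(1+s)^{\mu+1}(1-s)^{\mu+1}\,ds.
$$
The remaining task is to evaluate this constant and check it equals $E_0$; I expect the $A_0$ (from \eqref{a0}) and the Beta-type integral to combine so that the $\lambda,\delta,d$-dependence and the $2$-powers all cancel against the definitions of $B_0$, $D_0$, leaving exactly $\frac{2}{\Gamma(2\mu+d+2)}\frac{(\mu+1)(\mu+3)}{(\mu+2)^2}$. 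I would also note that the tail $n-[\log n]<k\le n$ that was dropped contributes $o(n^{2\mu+d+1})$ — in fact each such term has $m$ bounded, hence $F_{k,m}(1)=O(1)$, and there are only $O(\log n)$ of them with $k\le n$, so their total is $O(n^{2\mu+d}\log n)=o(n^{2\mu+d+1})$; this is presumably why the truncated sum $\Psi_{n,1}$ rather than $\Psi_n$ is used here, the tail being handled in a subsequent lemma.

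The main obstacle I anticipate is the bookkeeping in part (i): one must track which terms in numerator and denominator are genuinely leading-order versus lower-order, and confirm the exact cancellation of the quadratic-in-$m$ factor, rather than merely an order-of-magnitude match — a spurious surviving factor would spoil the constant $D_0$. A secondary (but purely mechanical) difficulty is the final constant-chasing in part (ii), matching the Beta integral times the accumulated $\Gamma$-factors and powers of $2$ against $E_0$; this is routine but error-prone, and I would double-check it using the duplication formula for $\Gamma$ to recognize $\Gamma(2\mu+d+2)$.
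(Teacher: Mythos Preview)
Your proposal is correct and follows essentially the same route as the paper's proof: for (i), factor out $K_m(1,1)$ and $K_m(1,1)^2$ from the denominator and numerator braces respectively, observe (via Lemma~\ref{le6.4}) that each brace is dominated by its $(m+k)^2m^2$ term uniformly in $k$, so their ratio tends to $\frac{(\mu+1)(\mu+3)}{(\mu+2)^2}$, and then apply \eqref{L4a}; for (ii), substitute this into $\Psi_{n,1}$, use $m\sim\frac{n-k}{2}$, $m+k\sim\frac{n+k}{2}$, and $(k+\delta)\binom{k+d-3}{k}\sim k^{d-2}/(d-3)!$ to obtain a Riemann sum for $\int_0^1 s^{d-2}(1-s^2)^{\mu+1}\,ds$, then reduce the accumulated constants to $E_0$ via Legendre's duplication formula. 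Your anticipated obstacles are exactly the two places where the paper spends its effort, and your side remark about the tail $\Psi_{n,2}$ is also how the paper proceeds (in the proof of Theorem~\ref{th1}).
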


\bigskip

\begin{proof}

(i) Assuming $n-k\rightarrow \infty$, so that $m=[ \frac{n-k}{2}]\rightarrow \infty $,
the previous lemma gives for the term in the numerator in (\ref{fkm1}),
\begin{eqnarray*}
\lefteqn{k^{2}K_{m}( 1,1) ^{2}+8kK_{m}( 1,1) K_{m}^{(0,1) }( 1,1)
 +16K_{m}^{( 0,1) }( 1,1)^{2} }
\\
&=&\hspace{-5pt}K_{m}(1,1)^{2}\,\\
&\times&\hspace{-5pt}\Big\{ k^{2} + 8 k \frac{(m+k) m}{2(\mu +2) }
(1+o(1)) + 4\frac{(m+k)^2 m^2}{(\mu +2)^2
}(1+o(1)) \Big\}
\\
&=&\hspace{-5pt}K_{m}( 1,1) ^{2}4\frac{(m+k)^2 m^2}{(\mu +2)^2
}(1+o(1)).
\end{eqnarray*}
Also the term in the denominator in (\ref{fkm1}) has the form
\begin{eqnarray*}
\lefteqn{\hspace*{-20pt}1+2^{k}A_{0}\Big\{ k^{2}K_{m}( 1,1) +8kK_{m}^{(1,0) }( 1,1) +
16K_{m}^{( 1,1) }( 1,1)\Big\} }
\\
&=&1+2^{k}A_{0}K_{m}( 1,1)\,\Big\{ k^{2} + 4 k \frac{(m+k) m}{( \mu +2) }
(1+o(1))\\
&~& + 4 \frac{( m+k)^{2}m^{2}}{( \mu +1) ( \mu +3) }
(1+o(1)) \Big\}
\\
&=&1+2^{k+2}A_{0}K_{m}( 1,1) \frac{( m+k)^{2}m^{2}}{( \mu +1)
( \mu +3) }(1+o(1)) .
\end{eqnarray*}
Thus
\[
F_{k,m}( 1) =\frac{K_{m}( 1,1)^{2} 4 \frac{(m+k)^2 m^2}{(\mu +2)^2}
(1+o(1)) }{1+2^{k+2}A_{0}K_{m}( 1,1)
\frac{( m+k)^{2}m^{2}}{( \mu +1) ( \mu +3) }
(1+o(1))}.
\]
Here
\[
2^{k}K_{m}( 1,1) =( m+k) ^{\mu +1}m^{\mu+1}B_{0}( 1+o( 1) )
\rightarrow \infty \textrm{ as } m\rightarrow \infty ,
\]
so
\begin{eqnarray*}
2^{2k}F_{k,m}( 1) &=&2^{k}K_{m}( 1,1) \frac{( \mu+1)
( \mu +3) }{A_{0}( \mu +2) ^{2}}(1+o(1))
\\
&=&D_{0}( m+k) ^{\mu +1}m^{\mu +1}(1+o(1)) .
\end{eqnarray*}

\noindent
(ii) From (i), and as $\frac{m}{n}=\frac{1}{2}( 1-\frac{k}{n})
+O( \frac{1}{n}) ,$
\begin{eqnarray*}
\lefteqn{\frac{\Psi _{n,1}( x) }{n^{2\mu +2}} = \frac{A_{0}^{2}D_{0}}{\lambda\,\delta }}\\
\\
&&\times\sum_{k=0}^{n-\left[ \log n\right] }(k+\delta )
\binom{k+d-3}{k}\Big( \frac{m+k}{n}\Big)^{\mu +1}
\Big(\frac{m}{n}\Big)^{\mu +1}(1+o(1))
\\
&=&\frac{A_{0}^{2} D_{0} n^{d-2}}{\lambda\,\delta 2^{2\mu +2} ( d-3)!}(1+o( 1))\\
&~&\times\sum_{k=0}^{n-[ \log n] }\Big(
\Big( \frac{k}{n}\Big) ^{d-2}\Big(1-\Big( \frac{k}{n}\Big)^{2}\Big)^{\mu +1}+
O\Big( \frac{1}{n}\Big) \Big)
\\
&=&\frac{A_{0}^{2}D_{0}n^{d-1}}{\lambda\,\delta 2^{2\mu +2}\left( d-3\right) !}
(1+o( 1) )\Big( \int_{0}^{1}x^{d-2}\left( 1-x^{2}\right) ^{\mu+1}dx+o\left( 1\right) \Big).
\end{eqnarray*}
Here, setting $x=s^{1/2}$,
$$
\int_{0}^{1}x^{d-2}\left( 1-x^{2}\right) ^{\mu +1}dx=\frac{1}{2}\int_{0}^{1}s^{d/2-3/2}
\left( 1-s\right) ^{\mu +1}ds=\frac{\Gamma ( \frac{d-1}{2})
\Gamma ( \mu +2) }{2\Gamma ( \frac{d+2\mu +3}{2}) }.
$$
Then
$$
\frac{\Psi _{n,1}( x) }{n^{2\mu +d+1}}=\frac{A_{0}^{2}D_{0}}{\lambda\,\delta\,
2^{2\mu +2}( d-3) !}\frac{\Gamma ( \frac{d-1}{2}) \Gamma
 ( \mu +2) }{2\Gamma ( \frac{d+2\mu +3}{2}) }( 1+o( 1) ) .
$$
Now we simplify the constant. Using (\ref{omegamu}), (\ref{sigmad}), (\ref{a0}), (\ref{b0}), and
(\ref{d0}),
\begin{eqnarray*}
\lefteqn{\hspace{-15pt}\frac{A_{0}^{2}D_{0}}{\lambda\,\delta ( d-3) !\,2^{2\mu +2}}\frac{\Gamma
( \frac{d-1}{2}) \Gamma ( \mu +2) }{2\Gamma (
\frac{d+2\mu +3}{2}) } }\\
&=&\frac{1}{\lambda\,( d-2) !\,2^{2\mu +1}}A_{0}B_{0}\frac{( \mu
+1) ( \mu +3) }{( \mu +2) ^{2}}\frac{\Gamma
( \frac{d-1}{2}) \Gamma ( \mu +2) }{2\Gamma (
\frac{d+2\mu +3}{2}) }
\\
&=&\frac{1}{\left( d-2\right) !\,2^{2\mu +2}}\frac{\Gamma ( \frac{d}{2}
) }{\Gamma ( \mu +\frac{d}{2}+1) }\frac{( \mu
+1) ( \mu +3) }{( \mu +2) ^{2}}\frac{\Gamma
( \frac{d-1}{2}) }{\Gamma ( \frac{d+2\mu +3}{2}) }
=E_{0},
\end{eqnarray*}
say. Using Legendre's duplication formula
\[
\Gamma ( 2a) =\frac{2^{2a-1}}{\sqrt{\pi }}\Gamma ( a)
\Gamma \Big( a+\frac{1}{2}\Big) ,
\]
with $a=\mu +\frac{d}{2}+1$, we see that
\[
\Gamma \Big( \mu +\frac{d}{2}+1\Big) \, \Gamma \Big( \mu +\frac{d}{2}+\frac{%
3}{2}\Big) =2^{-2\mu -d-1}  \sqrt{\pi }\, \Gamma ( 2\mu +d+2),
\]%
and with $a=\frac{d-1}{2}$,
\[
\Gamma \Big( \frac{d-1}{2}\Big)\, \Gamma \Big( \frac{d}{2}\Big) = 2^{-d+2}\sqrt{\pi }\,\Gamma( d-1).
\]%
So, finally we obtain
$$
E_{0} =\frac{2}{\Gamma ( 2\mu +d+2) }\frac{( \mu +1)
( \mu +3) }{( \mu +2) ^{2}}.
$$

\end{proof}

\bigskip

We shall need an estimate on the reproducing kernels that is uniform in $k$:

\begin{lemma}\label{le6}

Fix $\mu >-1,\delta \geqslant0$. For $m=\left[ \frac{n-k}{2}\right] \geqslant1,\,k\geqslant
0$, and $t\in \left[ -1,1\right],$
\begin{eqnarray*}
\lefteqn{K_{m}( t,t)  = K_{m}(
t,t;\mu,k+\delta)   \nonumber }\\
&\leqslant  &C( 1+t) ^{-k}\Big( m+\Big[ \frac{k}{2}\Big] +1\Big)
\\
&~& \times \Big( 1-t+\frac{1}{( m+[ \frac{k}{2}] +1) ^{2}}\Big)
^{-\mu -\frac{1}{2}}\Big( 1+t+\frac{1}{( m+[ \frac{k}{2}]+1
) ^{2}}\Big) ^{-\delta -\frac{1}{2}}.
\end{eqnarray*}
Here $C$ depends on $\mu $ and $\delta $ but not on $k,n,t$.
\end{lemma}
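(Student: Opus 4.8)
The plan is to bound the reproducing kernel through its extremal characterisation, after a change of variables that removes the dependence of the second Jacobi parameter on $k$. Recall that for $t_{0}\in[-1,1]$,
\[
K_{m}(t_{0},t_{0};\mu,k+\delta)=\max_{0\ne P\in\Pi_{m}}\frac{P(t_{0})^{2}}{\int_{-1}^{1}P(t)^{2}(1-t)^{\mu}(1+t)^{k+\delta}\,dt},
\]
where $\Pi_{m}$ denotes the univariate polynomials of degree at most $m$. First I would substitute $t=2u-1$, $u\in[0,1]$, turning the weight into a constant times $(1-u)^{\mu}u^{k+\delta}$, and then peel off the even part of $u^{k}$: writing $a=[k/2]$ and $\gamma=\delta+(k-2a)\in\{\delta,\delta+1\}$, the function $Q(u)=u^{a}P(2u-1)$ is a polynomial of degree $\leqslant m+a$, and $u^{k+\delta}P(2u-1)^{2}=u^{\gamma}Q(u)^{2}$. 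Undoing the substitution and keeping track of the powers of $2$, this gives
\[
\frac{P(t_{0})^{2}}{\int_{-1}^{1}P^{2}(1-t)^{\mu}(1+t)^{k+\delta}\,dt}
=(1+t_{0})^{-2a}\,\frac{\widehat Q(t_{0})^{2}}{\int_{-1}^{1}\widehat Q(t)^{2}(1-t)^{\mu}(1+t)^{\gamma}\,dt},
\]
where $\widehat Q(t)=\bigl(\tfrac{1+t}{2}\bigr)^{a}P(t)$ ranges over a subspace of $\Pi_{m+a}$. Maximising over $P$ and then enlarging the admissible set to all of $\Pi_{m+a}$ yields the clean reduction
\[
K_{m}(t_{0},t_{0};\mu,k+\delta)\leqslant(1+t_{0})^{-2[k/2]}\,K_{m+[k/2]}\bigl(t_{0},t_{0};\mu,\delta+(k\bmod 2)\bigr).
\]

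The second ingredient is the classical uniform bound for Jacobi reproducing kernels with \emph{fixed} parameters: for $\alpha,\beta>-1$ there is $C=C(\alpha,\beta)$ with $K_{\ell}(t,t;\alpha,\beta)\leqslant C\,\ell\,(1-t+\ell^{-2})^{-\alpha-1/2}(1+t+\ell^{-2})^{-\beta-1/2}$ for all $\ell\geqslant1$ and $t\in[-1,1]$. (This is standard and can, if needed, be read off from the endpoint values in Lemma~\ref{Kn} together with classical pointwise estimates for $P_{\ell}^{(\alpha,\beta)}$, all with $\alpha,\beta$ fixed.) Applying it to the right-hand side of the reduction with $(\alpha,\beta)=(\mu,\delta)$ or $(\mu,\delta+1)$ and $\ell=m+[k/2]\geqslant1$ (using $m\geqslant1$), the case $k$ even already has the stated shape, while for $k$ odd one absorbs the surplus half-integer using $(1+t_{0})(1+t_{0}+\ell^{-2})^{-1}\leqslant1$, i.e.
\[
(1+t_{0})^{-(k-1)}(1+t_{0}+\ell^{-2})^{-\delta-3/2}\leqslant(1+t_{0})^{-k}(1+t_{0}+\ell^{-2})^{-\delta-1/2}.
\]
In both cases this gives the asserted inequality with $m+[k/2]$ in place of $m+[k/2]+1$; since $m\geqslant1$ one has $m+[k/2]\leqslant m+[k/2]+1\leqslant 2(m+[k/2])$, so replacing $\ell$ by $m+[k/2]+1$ in the three $\ell$-dependent factors costs only constants depending on $\mu$ and $\delta$ ($\delta\geqslant0$ handling the last factor, and the possibly negative exponent $-\mu-\tfrac12$ being controlled by comparing $1-t_{0}+\ell^{-2}$ with $1-t_{0}+(m+[k/2]+1)^{-2}$ up to a bounded factor).

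The manipulations are all routine once the substitution is in place, so I expect the main work to be bookkeeping rather than ideas: checking that $\widehat Q$ genuinely lands in $\Pi_{m+[k/2]}$ (and not a higher degree), that the power of $(1+t_{0})$ comes out as exactly $-k$, and that the parity split $\gamma\in\{\delta,\delta+1\}$ together with the absorption step in the odd case reproduces exactly the factor $(1+t_{0}+(m+[k/2]+1)^{-2})^{-\delta-1/2}$. The only genuinely external input is the fixed-parameter Jacobi Christoffel-function estimate quoted above; granting that, I foresee no real obstacle.
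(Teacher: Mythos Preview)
Your approach is correct and essentially identical to the paper's: both use the extremal characterisation of the Christoffel function, factor out a power of $(1+t)$ to reduce $K_{m}(t,t;\mu,k+\delta)$ to a Jacobi kernel with fixed second parameter (the paper writes $(1+t)^{\ell}P(t)$ directly rather than passing through the substitution $t=2u-1$, which is an unnecessary detour in your write-up), and then invoke the classical fixed-parameter bound, for which the paper cites Nevai's 1979 Memoir, p.~108, Lemma~5. Your handling of the odd case and of the shift from $m+[k/2]$ to $m+[k/2]+1$ is more explicit than the paper's (which just says ``the case $k=2\ell+1$ is similar''), but the argument is the same.
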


\bigskip

\begin{proof}

Suppose first $k$ is even, say $k=2\ell $. Then from the extremal properties
for Christoffel functions,
\begin{eqnarray*}
\lefteqn{K_{m}( t,t;\mu,k+\delta)  = \sup_{\deg (
P) \leqslant  m}\frac{P^{2}( t) }{\int_{-1}^{1}P^{2}(
s) \left( 1-s\right) ^{\mu }\left( 1+s\right) ^{k+\delta }ds}}
\\
&& =( 1+t) ^{-k}\sup_{\deg ( P) \leqslant  m}\frac{\left(
P( t) \left( 1+t\right) ^{\ell }\right)^{2}}{\int_{-1}^{1}\left(
P( s) \left( 1+s\right) ^{\ell }\right)^{2}\left( 1-s\right)
^{\mu }\left( 1+s\right) ^{\delta }ds}
\\
&&\leqslant  ( 1+t) ^{-k}\sup_{\deg ( R) \leqslant  m+\ell}
\frac{R( t) ^{2}}{\int_{-1}^{1}R( s) ^{2}\left(
1-s\right) ^{\mu }\left( 1+s\right) ^{\delta }ds} \\
&& = ( 1+t) ^{-k}K_{m+\left[ \frac{k}{2}\right] }( t,t;\mu,\delta) .
\end{eqnarray*}
We now use a result from Nevai's 1979 Memoir \cite[p. 108, Lemma 5]
{Nevai1979}, that for $m+[ \frac{k}{2}] \geqslant1$ and $t\in \left[-1,1\right]$,
\begin{eqnarray*}
\lefteqn{K_{m+[ \frac{k}{2}] }(
t,t;\mu,\delta) \leqslant  C\Big( m+\left[ \frac{k}{2}\right] +1\Big)}
\\
&&\times  \Big( 1-t+\frac{1}{( m+[ \frac{k}{2}] +1) ^{2}}\Big)^{-\mu -\frac{1}{2
}}\Big( 1+t+\frac{1}{( m+[ \frac{k}{2}] +1)^{2}}\Big)^{-\delta-\frac{1}{2}}.
\end{eqnarray*}
The case $k=2\ell +1$ is similar.

\end{proof}

\bigskip

Now, we have the necessary tools in order to prove the main theorems of this section.

\begin{proof}[Proof of Theorem \ref{th1}]
We already have a limit for $\Psi _{n,1}$, and must now estimate the
remaining part of $\Psi _{n}$, namely, for $\|x\|=1$,
\[
\Psi _{n,2}(x) =\frac{A_{0}^{2}}{\lambda\,\delta }\sum_{k=n-
\left[ \log n\right] +1}^{n}2^{2k}( k+\delta ) \binom{k+d-3}{k} F_{k,m} (1) .
\]
We shall show that $\Psi _{n,2}(x) =o( n^{2\mu +d+1})$ which,
together with (\ref{e0}), will give the result. Now if $m\geqslant1$,
\begin{eqnarray*}
&&\hspace*{-20pt}{2^{k}A_{0}F_{k,m}( 1)
  =2^{k}A_{0}\frac{k^{2}K_{m}(
1,1) ^{2}+8kK_{m}( 1,1) K_{m}^{( 0,1) }(
1,1) +16K_{m}^{( 0,1) }( 1,1) ^{2}}{
1+2^{k}A_{0}\left\{ k^{2}K_{m}( 1,1) +8kK_{m}^{( 1,0)
}( 1,1) +16K_{m}^{( 1,1) }( 1,1) \right\} }}
\\
&&\leqslant  K_{m}( 1,1) +K_{m}( 1,1) +\frac{K_{m}^{(
0,1) }( 1,1) ^{2}}{K_{m}^{( 1,1) }(
1,1) }.
\end{eqnarray*}
Here as $k$ is close to $n$, and $m=O( \log n)$, (\ref{L4b}) and (\ref{L4c})
give
\[
\Big( \frac{K_{m}^{( 0,1) }( 1,1) }{K_{m}(1,1) }
\Big) ^{2}\leqslant  C( nm) ^{2},
\]
while if $m\geqslant1$,
\[
\frac{K_{m}( 1,1) }{K_{m}^{( 1,1) }( 1,1) }\leqslant  C( n^{2}m^{2}) ^{-1}.
\]
When $m=0$, the estimation is simpler as $K_{m}^{( 0,1)
}=0=K_{m}^{( 1,1) }$. Thus
\[
2^{k}A_{0}F_{k,m}( 1) \leqslant  CK_{m}( 1,1) ,
\]
where $C$ is a constant independent of $m$ and $n$, so for some possibly
different $C$,
\[
\Psi _{n,2}\left( x\right) \leqslant  C\sum_{k=n-\left[ \log n\right]+1}^{n}2^{k}( k+\delta )
\binom{ k+d-3}{k} K_{m} ( 1,1).
\]
Here, using Lemma \ref{le6.4},
\[
2^{k}K_{m}( 1,1) \leqslant  Cn^{2\mu +2},
\]
while
\[
( k+\delta ) \binom{ k+d-3}{k}\leqslant  Cn^{d-2}.
\]
Thus
\[
\Psi _{n,2}( x) \leqslant  C\, n^{2\mu +d}\,\log n=o( n^{2\mu+d+1}) .
\]

Then (\ref{ass1}) follows from Lemma~\ref{le5}. Finally, we note that if $\mu \geqslant-\frac{1}{2}$,
(1.10) of Theorem 1.1 in \cite[p. 120]{KrooLubinsky2013B} gives
\[
\lim_{n\rightarrow \infty }\mathbb{L}_{n}( x,x) /n^{2\mu +d+1}=\frac{2}{\Gamma
( 2\mu +d+2) }.
\]
Take there $\rho =\mu +\frac{1}{2}$, and note that the normalization
constant $\omega _{\rho }$ is incorporated in \cite[p. 119]{KrooLubinsky2013B} in
a different way to that here. Then
\begin{eqnarray*}
\lim_{n\rightarrow \infty }\tilde{\mathbb{L}}_{n}( x,x) /n^{2\mu
+d+1} &=&\frac{2}{\Gamma ( 2\mu +d+2) }-\frac{2}{\Gamma (
2\mu +d+2) }\frac{( \mu +1) ( \mu +3) }{(
\mu +2) ^{2}}
\\
&=&\frac{2}{\Gamma ( 2\mu +d+2) }\left\{ 1-\frac{( \mu
+1) ( \mu +3) }{( \mu +2) ^{2}}\right\}
\\
&=&\frac{2}{\Gamma ( 2\mu +d+2) ( \mu +2) ^{2}}.
\end{eqnarray*}

\end{proof}

\bigskip

Next we deal with $\left\Vert x\right\Vert <1$.

\begin{proof}[Proof of Theorem \ref{th2}]

We must estimate $F_{k,m}( t)$ defined in (\ref{L3-Fkm}), with $t=2r^{2}-1$ and $r=\|x\|$.

Let us assume that $t\leqslant  1-\eta$ for some $\eta >0$. Then, with the
convention $p_{j}=p_{j}^{( \mu ,k+\delta ) }$ for orthonormal Jacobi polynomials,
$j=m,m+1,$
\begin{eqnarray}
\left\vert K_{m}( t,1) \right\vert  &=&\frac{\gamma _{m}}{
\gamma _{m+1}}\left\vert \frac{p_{m+1}( t) p_{m}( 1)
-p_{m}( t) p_{m+1}( 1) }{t-1}\right\vert
\nonumber
\\
&\leqslant  &\frac{C}{2}\frac{\sqrt{p_{m}^{2}( t) +p_{m+1}^{2}(
t) }\sqrt{p_{m}^{2}( 1) +p_{m+1}^{2}( 1) }}{\eta
}
\nonumber
\\ \label{0cotakm}
&\leqslant  &\frac{C}{2\eta }K_{m+1}( t,t) ^{1/2}\sqrt{%
p_{m}^{2}( 1) +p_{m+1}^{2}( 1) },
\end{eqnarray}%
where $\gamma_m = k_m/\sqrt{h_m}$ is the leading coefficient of $p_{m}$.

Also,
\begin{eqnarray}
\lefteqn{\left\vert K_{m}^{( 0,1) }( t,1) \right\vert =\frac{\gamma _{m}}{\gamma _{m+1}} }
\nonumber
\\
&\times& \hspace{-8pt}\left| \frac{( p_{m+1}(t)
p_{m}'(1)-p_{m}(t) p_{m+1}'(1))(t-1)+p_{m+1}(t)
p_{m}(1)-p_{m}(t)p_{m+1}(1)}{(t-1)^{2}}\right|
\nonumber
\\
&\leqslant  &\hspace{-8pt}\frac{C}{2\eta ^{2}}\sqrt{p_{m}^{2}(t) +p_{m+1}^{2}(
t)}\left\{2\sqrt{( p_{m}'(1))^{2}+(p_{m+1}'(1))^{2}}+\sqrt{p_{m}^{2}(1)+p_{m+1}^{2}(1) }\right\}
\nonumber
\\\label{0cotakm01}
&\leqslant  &\hspace{-10pt}\frac{C}{2\eta ^{2}}K_{m+1}(t,t)^{1/2}\left\{ 2\sqrt{(p_{m}'(1))^{2}+
(p_{m+1}'(1))^{2}}+\sqrt{p_{m}^{2}(1)+p_{m+1}^{2}(1) }\right\}.
\end{eqnarray}
Next, we note that given any real number $a$, there exists $C_{a}>1$ such
that for all $x$ with $\min ( x,x+a) \geqslant1,$
\[
C_{a}^{-1}x^{a}\leqslant  \frac{\Gamma ( x+a) }{\Gamma ( x) }\leqslant  C_{a}x^{a}.
\]
This follows from Stirling's formula and the positivity and continuity of
$\frac{\Gamma ( x+a) }{\Gamma ( x) }$ for this range of $x$. Then
from (\ref{JacON2}), if $m\geqslant1,$ $\alpha =\mu$, $\beta= k+\delta$,
\begin{equation}\label{pm'1}
\left\vert p_{m}^{\prime }( 1) \right\vert
\leqslant  C\frac{( m+k) ^{3/2+\mu /2}\,m^{1+\mu /2}}{2^{k/2}},
\end{equation}
and from (\ref{JacON1}),
$$
\left\vert p_{m}( 1) \right\vert
\leqslant   C\frac{( m+k) ^{1/2+\mu /2}\,m^{\mu /2}}{2^{k/2}}.
$$
Substituting these into (\ref{0cotakm}) and (\ref{0cotakm01}) gives for $m\geqslant1,$
\begin{equation}\label{cotaKm}
\left\vert K_{m}( t,1) \right\vert \leqslant  C
\Big( \frac{K_{m+1}( t,t) }{2^{k}}\Big) ^{1/2}( m+k) ^{1/2+\mu/2}\,m^{\mu /2}
\end{equation}
and
\begin{equation}\label{cotaKm01}
\left\vert K_{m}^{( 0,1) }( t,1) \right\vert
\leqslant  C\Big( \frac{K_{m+1}( t,t) }{2^{k}}\Big) ^{1/2}(m+k)^{3/2+\mu /2}\,m^{1+\mu /2}.
\end{equation}
Next, by (\ref{L4a}) and (\ref{L4c}),
\begin{equation}\label{cotaKm11}
2^{k}K_{m}^{( 1,1) }( 1,1) \geqslant C( m+k)^{\mu +3}\,m^{\mu +3},
\end{equation}
so, inserting (\ref{cotaKm}), (\ref{cotaKm01}) and (\ref{cotaKm11})  into (\ref{L3-Fkm}),
\begin{eqnarray*}
\lefteqn{2^{k}F_{k,m}( t)  \leqslant  CK_{m+1}( t,t) }\\
&~&\times\left\{ \frac{k^{2}\,( m+k) ^{1+\mu }\,m^{\mu }+k\,( m+k) ^{2+\mu}\,
m^{1+\mu }+( m+k) ^{3+\mu }\,m^{2+\mu }}{( m+k) ^{\mu
+3}\,m^{\mu +3}}\right\}
\nonumber
\\\label{cotafkm}
&\leqslant  &CK_{m+1}\left( t,t\right) /\left( m+1\right) .
\end{eqnarray*}

This bound holds also for $m=0$, thought it is obtained in a simpler way since $K_{0}$ is a constant,
\[
F_{k,0}( t) =\frac{k^{2}K_{0}( t,1) ^{2}}{1+2^{k}A_{0}k^{2}K_{0}
( 1,1) }\leqslant  C\frac{K_{0}( t,t) }{2^{k}} \leqslant  C\frac{K_{1}( t,t) }{2^{k}} .
\]
Then,
\[
\Psi _{n}( x) \leqslant  C\sum_{k=0}^{n}2^{k}( k+\delta )
\binom{ k+d-3}{k} r^{2k} \frac{K_{m+1}( t,t) }{m+1}.
\]
Using Lemma~\ref{le6}, and that $1+t=2r^{2}$ and $m=\left[\frac{n-k}{2}\right]$, we continue this as
\begin{eqnarray*}
\lefteqn{\Psi _{n}( x) }
\\
&\leqslant  &C \sum_{k=0}^{n}( k+1)^{d-2}
\Big( \frac{m+[ \frac{k}{2}]+2 }{[ \frac{n-k}{2}]
+1}\Big)
\\
&&\times \Big( 1-t+\frac{1}{(m+[ \frac{k}{2}] +2)^{2}}\Big)^{-\mu -\frac{1}{2}}
\Big( 1+t+\frac{1}{( n+[ \frac{k}{2}] +2) ^{2}}\Big)^{-\delta -\frac{1}{2}}
\\
&\leqslant  &C n^{d-1}\Big( 1-t+\frac{4}{n^{2}}\Big) ^{-\mu -\frac{1}{2}}
\Big( 1+t+\frac{4}{n^{2}}\Big)^{-\delta -\frac{1}{2}}\sum_{k=0}^{n}\frac{1}{[
\frac{n-k}{2}] +1}
\\
&\leqslant  &C n^{d-1}\log n \Big( 2(1-r^{2}) +\frac{4}{n^{2}}\Big)^{-\mu -\frac{1}{2}}
\Big( 2r^{2}+\frac{4}{n^{2}}\Big) ^{-\delta -\frac{1}{2}}.
\end{eqnarray*}
Finally,  \cite[Theorem 1.3]{KrooLubinsky2013A} gives
\begin{eqnarray*}
\lim_{n\rightarrow \infty }\mathbb{L}_{n}\left( x,x\right) /\binom{n+d}{d}
&=& \frac{\omega _{\mu }W_{0}( x) }{\left( 1-\left\Vert
x\right\Vert ^{2}\right) ^{\mu }}\\
&=&\frac{1}{\sqrt{\pi }}
\frac{\Gamma (\mu +1)
\Gamma ( \frac{d+1}{2}) }{\Gamma ( \mu +\frac{d}{2}+1) }
\left( 1-\left\Vert x\right\Vert ^{2}\right) ^{-\frac{1}{2}-\mu },
\end{eqnarray*}
uniformly for $x$ in compact subsets of the unit ball. Thus $\mathbb{L}_{n}( x,x) $
grows like $n^{d}>>n^{d-1}\log n$, so (\ref{ass2}) follows.

\bigskip

It remains to deal with the case $x=0$, that is $r=0$. In this case all
terms in $\Psi _{n}( x) $ in (\ref{psi}) vanish except for $k=0$. We see
that
\begin{equation}\label{psi0}
\Psi _{n}( 0) =A_{0}^{2}F_{0,[ \frac{n}{2}] }(-1) =
\frac{A_{0}^{2}16K_{[ \frac{n}{2}] }^{(0,1) }( -1,1) ^{2}}{1+16A_{0}
K_{[ \frac{n}{2}]}^{( 1,1) }( 1,1) }.
\end{equation}
With $m=[ \frac{n}{2}] $, $k=0$, we see as above that
\begin{eqnarray}
\left\vert K_{m}^{( 0,1) }( -1,1) \right\vert &\leqslant &
C \Big(
\left\vert p_{m+1}( -1) \right\vert \left\vert p_{m}^{\prime
}( 1) \right\vert +\left\vert p_{m}( -1) \right\vert
\left\vert p_{m+1}^{\prime }( 1) \right\vert  \nonumber\\
&~&\quad +\left\vert p_{m+1}( -1) \right\vert \left\vert p_{m}(
1) \right\vert +\left\vert p_{m}( -1) \right\vert
\left\vert p_{m+1}( 1) \right\vert\Big).\label{cotakm01}
\end{eqnarray}

We shall need the classic bound \cite[p. 36, eqn. (20--21)]{NevaiVertesi1985}
\[
\left\vert p_{m}( t) \right\vert \leqslant  C
\Big( 1-t+\frac{1}{m^{2}}\Big) ^{-\frac{\mu }{2}-\frac{1}{4}}
\Big( 1+t+\frac{1}{m^{2}}\Big) ^{-\frac{\delta }{2}-\frac{1}{4}},\quad t\in [ -1,1].
\]
Here $C$ depends only on $\mu $ and $\delta $. Then
\[
\left\vert p_{m}( -1) \right\vert \leqslant  Cm^{\delta +\frac{1}{2}},
\qquad \left\vert p_{m}( 1) \right\vert \leqslant  Cm^{\mu +\frac{1}{2}}.
\]
Moreover, (\ref{pm'1}) gives (recall $k=0$),
\[
\left\vert p_{m}^{\prime }( 1) \right\vert \leqslant  Cm^{\mu +\frac{5}{2}}.
\]
Substituting all these bounds in (\ref{cotakm01}) yields
\[
\left\vert K_{m}^{( 0,1) }( -1,1) \right\vert \leqslant  Cm^{\delta +\mu +3}.
\]
In addition, (\ref{cotaKm11}) leads to
\[
\left\vert K_{m}^{( 1,1) }( 1,1) \right\vert \geqslant Cm^{2\mu +6}.
\]
Substituting the last two bounds in (\ref{psi0}) gives
\[
\left\vert \Psi _{n}\left( 0\right) \right\vert \leqslant  Cn^{2\delta}=Cn^{d-2}.
\]
Then (\ref{ass2}) follows also for this case.

\end{proof}

\bigskip

\section{The two dimensional case}
\setcounter{equation}{0}

In the case $d=2$ results are somewhat different, but Theorems \ref{th1} and \ref{th2} also hold. In this case
$\delta=0$,  $\omega _{\mu }=\pi$, $\sigma _{d}=2\pi$, then $\omega _{\mu }/\sigma _{d}=1/2$ and
$A_0=\lambda \,2^{\mu +1}$.
Moreover the reproducing kernel of spherical harmonics is obtained in a different way.

\begin{prop}\label{Kerneld2}
For $n\geqslant0$ and $d=2$, we get
\begin{eqnarray*}
\tilde{\mathbb{L}}_n(x,y) &=& \mathbb{L}_n(x,y) \\
& & -\frac{A_0}{\lambda} \,\sum_{k=0}^{n}
\mathbf{K}^{\mu,k}_{[\frac{n-k}{2}]}(2r^2-1,1)^t \,
\Lambda^{(\mu,k;M_k)}_{[\frac{n-k}{2}]}\, \mathbf{K}^{\mu,k}_{[\frac{n-k}{2}
]}(2s^2-1,1) \\
& & \quad\times \,2^{k}\, (r\,s)^{k}\,\cos(n(\theta-\widehat{\theta})),
\end{eqnarray*}
where $x=r \,(\cos \theta,\sin \theta)$, $y=s \,(\cos \widehat{\theta},\sin \widehat{\theta})$,
$r=\|x\|$, $s= \|y\|$, $\theta, \widehat{\theta} \in [0, 2\pi]$.
\end{prop}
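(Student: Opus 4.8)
The plan is to repeat the proof of Proposition~\ref{prop 5.2} almost verbatim, the only substantive change being that the addition formula for spherical harmonics valid for $d\geqslant3$ must be replaced by its two-dimensional analogue.

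The starting point is the pointwise identity for $Q^m_{j,\nu}(x)\,Q^m_{j,\nu}(y)\,(\tilde H^m_{j,\nu})^{-1}$ obtained by combining (\ref{eq:sovpol}), (\ref{tilde--norm}) and (\ref{KC1v}). Since it involves only the univariate radial polynomial $q_j^{(\mu,\beta_j^m;M_{m-2j})}$ together with the product $Y^{m-2j}_\nu(x)\,Y^{m-2j}_\nu(y)=(r s)^{m-2j}\,Y^{m-2j}_\nu(\xi)\,Y^{m-2j}_\nu(\varrho)$, it holds in every dimension. I would then sum it over $0\leqslant m\leqslant n$, $0\leqslant j\leqslant[m/2]$ and $1\leqslant\nu\leqslant a^2_{m-2j}$, using the definitions (\ref{KernelBall}) and (\ref{KernelSob}) of $\mathbb{L}_n$ and $\tilde{\mathbb L}_n$.

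The dimension intervenes only in the inner sum $\sum_{\nu}Y^k_\nu(\xi)Y^k_\nu(\varrho)$ with $k=m-2j$. For $d=2$ we have $a^2_0=1$ and $a^2_k=2$ for $k\geqslant1$, and with the orthonormal bases $\{1\}$ of $\mathcal{H}^2_0$ and $\{\sqrt2\cos k\theta,\sqrt2\sin k\theta\}$ of $\mathcal{H}^2_k$ this sum equals $1$ for $k=0$ and $2\cos(k(\theta-\widehat\theta))$ for $k\geqslant1$; equivalently it is the $\delta\to0$ limit of $\frac{k+\delta}{\delta}\,C^\delta_k(\langle\xi,\varrho\rangle)$, since $\frac{k+\delta}{\delta}\,C^\delta_k(\cos\gamma)\to 2\cos k\gamma$ while $C^\delta_0\equiv1$. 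After inserting the specializations $\delta=0$, $\omega_\mu=\pi$, $\sigma_2=2\pi$ and $A_0=\lambda 2^{\mu+1}$, the remaining bookkeeping is exactly that of Proposition~\ref{prop 5.2}: the change of variable $t=2r^2-1$, the identity $\beta^{m+2}_{j+1}=\beta^m_j$, the splitting $\tilde{\mathbb L}_n=\mathbb{L}_n-F(n)-F(n-1)$, and the reindexing $n-2j=k$ which merges the two partial sums into a single sum over $0\leqslant k\leqslant n$ with $m=[\frac{n-k}{2}]$. Collecting the terms yields the asserted identity.

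The one point that needs care — the would-be obstacle — is the inhomogeneity of the two-dimensional addition formula at $k=0$: the space $\mathcal{H}^2_0$ is one-dimensional while $\mathcal{H}^2_k$ is two-dimensional for $k\geqslant1$, so the uniform factor $\frac{k+\delta}{\delta}$ available for $d\geqslant3$ has to be replaced by the family $\epsilon_0=1$, $\epsilon_k=2$ $(k\geqslant1)$. One checks that this is precisely the degeneration of the ultraspherical kernel into the Chebyshev kernel as $\delta\to0$, so that beyond isolating the $k=0$ term no genuine case analysis is required, and Proposition~\ref{Kerneld2} follows as a direct specialization of Proposition~\ref{prop 5.2}.
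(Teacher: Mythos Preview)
Your proposal is correct and follows essentially the same route as the paper: the paper's own proof simply says ``The proof is the same as in Proposition~\ref{prop 5.2} taking into account that in this case $a_{k}^d=2$ for $k\geqslant0$, and the addition formula of spherical harmonics for $d=2$ reduces to the addition formula for the cosines.'' You are in fact slightly more careful than the paper, since you correctly isolate the exceptional value $a_0^2=1$ and the attendant factor $\epsilon_k$ (which the paper glosses over), but the argument is otherwise identical.
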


\bigskip

\begin{proof}
The proof is the same as in Proposition \ref{prop 5.2} taking into account that in this case
$a_{k}^d=2$, \, for $k\geqslant0$, and the addition formula of spherical harmonics for $d=2$ reduces to
the addition formula for the cosines (see \cite[p. 20]{DaiXu2013}), then
$$\sum_{\nu =1}^{a_{k}^d} Y_{\nu}^{k}(\cos \theta,\sin \theta)\, Y_{\nu}^{k}
(\cos \widehat{\theta},\sin \widehat{\theta})= \cos(n(\theta-\widehat{\theta})). $$

\end{proof}

\bigskip

Lemma \ref{psin} can be rewritten for the case $d=2$ as

\begin{lemma}
For $d=2$ and $n\geqslant 0$, we get
$$
\tilde{\mathbb{L}}_{n}(x,x) =\mathbb{L}_{n}(x,x)-\Psi _{n}(x) ,
$$
where
$$
\Psi _{n}(x) =\frac{A_{0}^{2}}{\lambda}\sum_{k=0}^{n}2^{2k}r^{2k}F_{k,m}(t).
$$
Here \,$r=\left\Vert x\right\Vert $, \,$m=[ \frac{n-k}{2}]$,\,
$t=2r^{2}-1,$ \, and $F_{k,m}( t)$ is given as in (\ref{L3-Fkm}).
\end{lemma}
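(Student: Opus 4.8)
The plan is to repeat the proof of Lemma~\ref{psin} almost verbatim, the only change being that the ultraspherical addition formula used there must be replaced by the cosine addition formula recorded in Proposition~\ref{Kerneld2}. This replacement is forced rather than cosmetic: for $d=2$ one has $\delta=0$, so the quantities $\frac{k+\delta}{\delta}$ and $C_k^\delta(1)=\binom{k+d-3}{k}$ that organize the $d\geqslant 3$ computation are no longer meaningful, and one must argue directly from the $d=2$ kernel identity.

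First I would specialize the identity of Proposition~\ref{Kerneld2} to $y=x$. Then $s=r$ and $\widehat{\theta}=\theta$, hence $(r s)^k=r^{2k}$ and the cosine factor equals $1$ for every $k$ (including $k=0$), so that, with $t=2r^2-1$ and $m=[\frac{n-k}{2}]$,
\[
\tilde{\mathbb{L}}_{n}(x,x)=\mathbb{L}_{n}(x,x)-\frac{A_0}{\lambda}\sum_{k=0}^{n}2^{k}r^{2k}\,
\mathbf{K}^{\mu,k}_{m}(t,1)^{t}\,\Lambda^{(\mu,k;M_{k})}_{m}\,\mathbf{K}^{\mu,k}_{m}(t,1).
\]

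Next I would apply Lemma~\ref{le2}, which is valid for every $d$, to replace $\Lambda^{(\mu,k;M_{k})}_{m}$ by $\Delta_{k,m}^{-1}M_{k}$, with $M_k$ as in (\ref{defM}) and $\Delta_{k,m}$ as in (\ref{delta}). Expanding the quadratic form by means of the column vector $\mathbf{K}^{\mu,k}_{m}(t,1)$ from (\ref{Kalpha}) gives
\[
\mathbf{K}^{\mu,k}_{m}(t,1)^{t}\,M_{k}\,\mathbf{K}^{\mu,k}_{m}(t,1)
=2^{k}A_{0}\Big(k^{2}K_{m}(t,1)^{2}+8k\,K_{m}(t,1)K_{m}^{(0,1)}(t,1)+16\,K_{m}^{(0,1)}(t,1)^{2}\Big),
\]
which is exactly $2^{k}A_{0}$ times the numerator of $F_{k,m}(t)$ in (\ref{L3-Fkm}). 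Since, by (\ref{delta}), the quantity $\Delta_{k,m}$ coincides with the denominator in that same definition, dividing by $\Delta_{k,m}$ yields $\mathbf{K}^{\mu,k}_{m}(t,1)^{t}\Lambda^{(\mu,k;M_{k})}_{m}\mathbf{K}^{\mu,k}_{m}(t,1)=2^{k}A_{0}F_{k,m}(t)$. Substituting this back into the previous display and collecting the two factors $2^{k}$ and the two factors $A_0$ produces
\[
\Psi_{n}(x)=\frac{A_{0}^{2}}{\lambda}\sum_{k=0}^{n}2^{2k}r^{2k}F_{k,m}(t),
\]
as claimed.

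I do not expect a genuine obstacle: once Proposition~\ref{Kerneld2} is available the argument is bookkeeping. The only point worth stressing is the structural contrast with $d\geqslant 3$: no binomial factor $\binom{k+d-3}{k}$ and no $\delta^{-1}$ survive here, precisely because in two dimensions the cosine addition formula sidesteps the ultraspherical polynomials altogether.
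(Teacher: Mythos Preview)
Your proposal is correct and matches the paper's intended approach: the paper does not spell out a proof for this lemma but simply introduces it with ``Lemma~\ref{psin} can be rewritten for the case $d=2$ as,'' which is precisely what you carry out---specializing Proposition~\ref{Kerneld2} to $y=x$ and repeating the computation in the proof of Lemma~\ref{psin} with the cosine addition formula in place of the ultraspherical one. Your remark that the factors $\delta^{-1}$ and $\binom{k+d-3}{k}$ disappear because the $d=2$ addition formula bypasses ultraspherical polynomials is exactly the point.
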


\bigskip

Lemma \ref{le6.4} and part (i) of Lemma \ref{le5a} are true for $d=2$, with $\delta=0$, and part (ii) of
Lemma \ref{le5b} turns out

\begin{lemma}
For $\|x\|=1$, let
$$
\Psi _{n,1}(x) =\frac{A_{0}^{2}}{\lambda }\sum_{k=0}^{n-[\log n]}
2^{2k}\,F_{k,m}(1) .
$$
Then
$$
\lim_{n\rightarrow \infty }\frac{\Psi _{n,1}(x) }{n^{2\mu +3}}=E_{0},
$$
where $E_{0}$ is given in (\ref{E0}) with $d=2$.

\end{lemma}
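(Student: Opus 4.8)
The plan is to repeat the proof of Lemma~\ref{le5}(ii) almost verbatim, the only structural change being that for $d=2$ the sum defining $\Psi_{n,1}$ no longer carries the weight $(k+\delta)\binom{k+d-3}{k}/\delta$, so the Riemann sum that emerges is one-dimensional. First I would invoke part (i) of Lemma~\ref{le5} (valid for $d=2$ with $\delta=0$): uniformly for $k\geqslant 0$, as $m=[\frac{n-k}{2}]\to\infty$, one has $2^{2k}F_{k,m}(1)=D_{0}(m+k)^{\mu+1}m^{\mu+1}(1+o(1))$. Since the summation index $k$ in $\Psi_{n,1}$ runs only up to $n-[\log n]$, we have $n-k\geqslant[\log n]\to\infty$, hence $m\to\infty$ uniformly over the whole sum and this asymptotic applies throughout.

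Next I would substitute this into $\Psi_{n,1}(x)=\frac{A_{0}^{2}}{\lambda}\sum_{k=0}^{n-[\log n]}2^{2k}F_{k,m}(1)$ and extract the leading power of $n$. Using $\frac{m}{n}=\frac12(1-\frac{k}{n})+O(\frac1n)$ and $\frac{m+k}{n}=\frac12(1+\frac{k}{n})+O(\frac1n)$, uniformly in the range of $k$, I get $\big(\frac{m+k}{n}\big)^{\mu+1}\big(\frac{m}{n}\big)^{\mu+1}=2^{-2\mu-2}(1-\frac{k^{2}}{n^{2}})^{\mu+1}+O(\frac1n)$, so that
\[
\frac{\Psi_{n,1}(x)}{n^{2\mu+3}}=\frac{A_{0}^{2}D_{0}}{\lambda\,2^{2\mu+2}}(1+o(1))\Big(\frac1n\sum_{k=0}^{n-[\log n]}\Big(1-\frac{k^{2}}{n^{2}}\Big)^{\mu+1}+o(1)\Big).
\]
The sum on the right is a Riemann sum for $\int_{0}^{1}(1-x^{2})^{\mu+1}\,dx$, and truncating at $n-[\log n]$ rather than $n$ changes it by $o(1)$; the integral is evaluated by $x=s^{1/2}$ to give $\int_{0}^{1}(1-x^{2})^{\mu+1}dx=\frac12\int_0^1 s^{-1/2}(1-s)^{\mu+1}ds=\frac{\sqrt{\pi}\,\Gamma(\mu+2)}{2\,\Gamma(\mu+\frac52)}$. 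Hence $\lim_{n\to\infty}\Psi_{n,1}(x)/n^{2\mu+3}=\frac{A_{0}^{2}D_{0}}{\lambda\,2^{2\mu+2}}\cdot\frac{\sqrt{\pi}\,\Gamma(\mu+2)}{2\,\Gamma(\mu+\frac52)}$, which is exactly the $d=2$ specialization of the corresponding display in the proof of Lemma~\ref{le5}(ii) (with the factor $\delta(d-3)!$ absent and $\Gamma(\frac{d-1}{2})=\sqrt{\pi}$, $\Gamma(\frac{d+2\mu+3}{2})=\Gamma(\mu+\frac52)$).

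Finally I would simplify the constant. Inserting $\delta=0$, $\omega_{\mu}/\sigma_{d}=1/2$, and the explicit values $A_{0}=\lambda\,2^{\mu+1}$ of (\ref{a0}), $B_{0}$ of (\ref{b0}), and $D_{0}$ of (\ref{d0}), and then applying Legendre's duplication formula exactly as at the end of the proof of Lemma~\ref{le5}, one identifies this limit with the constant $E_{0}$ of (\ref{E0}) evaluated at $d=2$. The only genuinely delicate point in the whole argument is the uniformity in $k$ of the $o(1)$ in part (i) of Lemma~\ref{le5} over the entire range $0\leqslant k\leqslant n-[\log n]$: this is what lets the factor $1+o(1)$ be pulled outside the sum, so that $\tfrac1n\sum_{k}(1-(k/n)^{2})^{\mu+1}$ may be replaced by the integral. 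That uniformity (including the harmless $k=0$ term, where $K_{m}^{(0,1)}$ and $K_{m}^{(1,1)}$ are controlled by (\ref{L4b}) and (\ref{L4c})) is already furnished by Lemmas~\ref{le6.4} and \ref{le5}, so no new estimate is required.
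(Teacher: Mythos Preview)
Your proposal is correct and follows precisely the route the paper itself indicates: Section~7 does not give an independent proof of this lemma but simply states it as the $d=2$ analogue of Lemma~\ref{le5}(ii), and you have carried out exactly that adaptation (dropping the weight $(k+\delta)\binom{k+d-3}{k}/\delta$, passing to the Riemann sum for $\int_0^1(1-x^2)^{\mu+1}dx$, and simplifying the resulting constant via the duplication formula). Your discussion of the uniformity needed to pull the $1+o(1)$ outside the sum is more explicit than anything in the paper.
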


\bigskip

Moreover, Lemma \ref{le6} works for $d=2$, so finally Theorem \ref{th1} and Theorem \ref{th2} also hold
in this case.

\bigskip

\section*{Acknowledgements}

First, second, fourth and fifth authors (A.M.D., L.F., T.E.P., M.A.P.) thank MINECO of Spain and the European
Regional Development Fund (ERDF) through grants MTM2011--28952--C02--02 and
MTM2014--53171--P, and Jun\-ta de Andaluc\'{\i}a grant P11--FQM--7276 and research group FQM--384.

The work of the third author (D.S.L.) was supported in part by NSF grant DMS136208.

This work was finished during a visit of second author (L.F.) to School of Mathematics,
Georgia Institute of Technology.


\begin{thebibliography}{99}

\bibitem{AbramowitzStegun1964}
M. Abramowitz and I. A. Stegun. 
\textit{Handbook of mathematical functions with formulas, graphs,
and mathematical tables,} volume 55 of National Bureau of Standards Applied Mathematics
Series. U.S. Government Printing Office, Washington, D.C., 1964.

\bibitem{AlfaroMarcellanRezolaRonveaux1995}
M. Alfaro, F. Marcell\'an, M. L. Rezola, and A. Ronveaux. 
Sobolev-type orthogonal polynomials: the nondiagonal case. 
\textit{J. Approx. Theory}, 83(2):266–287, 1995.

\bibitem{DaiXu2013}
F. Dai and Y. Xu. 
\textit{Approximation theory and harmonic analysis on spheres and balls}. 
Springer Monographs in Mathematics. Springer, New York, 2013.

\bibitem{DuenasGarza2013}
H. Due\~nas and L. E. Garza. 
Jacobi-Sobolev-type orthogonal polynomials: holonomic equation and electrostatic interpretation - a non-diagonal case. 
\textit{Integral Transforms Spec. Funct.}, 24(1):70–83, 2013.

\bibitem{DunklXu2014}
C. F. Dunkl and Y. Xu. 
\textit{Orthogonal polynomials of several variables}. 
Encyclopedia of Mathematics and its Applications. 
Cambridge University Press, Cambridge, second edition, 2014.

\bibitem{GolubVanLoan2013}
G. H. Golub and C. F. Van Loan. 
\textit{Matrix computations}. 
Johns Hopkins Studies in the Mathematical Sciences. 
Johns Hopkins University Press, Baltimore, MD, fourth edition, 2013.

\bibitem{KrooLubinsky2013A}
A. Kro\'o and D. S. Lubinsky. 
Christoffel functions and universality in the bulk for multivariate
orthogonal polynomials. 
\textit{Canad. J. Math.}, 65(3):600–620, 2013.

\bibitem{KrooLubinsky2013B}
A. Kro\'o and D. S. Lubinsky. 
Christoffel functions and universality on the boundary of the ball. 
\textit{Acta Math. Hungar.}, 140(1-2):117–133, 2013.

\bibitem{MarcellanXu2015}
F. Marcell\'an and Y. Xu. 
On Sobolev orthogonal polynomials. 
\textit{Expo. Math.}, 33(3):308–352, 2015.

\bibitem{Nevai1979}
P. Nevai. Orthogonal polynomials. \textit{Mem. Amer. Math. Soc.}, 18(213), 1979.

\bibitem{NevaiVertesi1985}
P. Nevai and P. V\'ertesi. 
Mean convergence of Hermite-Fej\'er interpolation. 
\textit{J. Math. Anal. Appl.}, 105(1):26–58, 1985.

\bibitem{Szego1975}
G. Szeg\H{o}. 
\textit{Orthogonal polynomials}. 
American Mathematical Society, Providence, R.I., fourth
edition, 1975. American Mathematical Society, Colloquium Publications, Vol. XXIII.

\bibitem{Xu1996}
Y. Xu. 
Asymptotics for orthogonal polynomials and Christoffel functions on a ball. 
\textit{Methods Appl. Anal.}, 3(2):257–272, 1996.

\bibitem{Xu2008}
Y. Xu. 
Sobolev orthogonal polynomials defined via gradient on the unit ball. 
\textit{J. Approx. Theory}, 152(1):52–65, 2008.
\end{thebibliography}
\end{document}